\documentclass[1p]{elsarticle}

\usepackage{lineno}
\usepackage[colorlinks,linkcolor={blue}]{hyperref}
\modulolinenumbers[5]

\usepackage{amsfonts}
\usepackage{graphicx}
\usepackage{amsmath}
\usepackage{amssymb}
\usepackage[usenames, dvipsnames]{color}

\usepackage{mathtools}

\newtheorem{theorem}{Theorem}

\newtheorem{corollary}[theorem]{Corollary}

\newtheorem{definition}[theorem]{Definition}

\newtheorem{lemma}[theorem]{Lemma}
\newtheorem{notation}[theorem]{Notation}

\newtheorem{proposition}[theorem]{Proposition}

\newtheorem{remark}[theorem]{Remark}
\newenvironment{proof}[1][Proof]{\textbf{#1.} }{\ \rule{0.5em}{0.5em}}

\journal{...}

\begin{document}
\begin{frontmatter}
\title{A non-perturbative topological persistence theorem for normally hyperbolic invariant manifolds}

\author{Wojciech Spólnik\footnote{The author to whom any correspondence should be addressed. \newline
\textit{E-mail address:} \texttt{wspolnik@agh.edu.pl}}, Maciej J. Capiński}

\address{Faculty of Applied Mathematics, AGH University of Krakow}

\begin{abstract}
Floer proved that, under a sufficiently small $C^0$ perturbation, a
normally hyperbolic invariant manifold gives rise to a nearby invariant set whose cohomology ring contains a subring isomorphic to that of the original manifold. Using topological conditions, expressed
in terms of the so-called covering relations, we generalize this theorem to a non-perturbative context. Our result does not require the existence of an invariant manifold before perturbation, nor does it require the original map to be smooth or invertible.
For the proof, we develop an additional structure on the Conley index for maps on vector bundles.
\end{abstract}

\begin{keyword} normally hyperbolic invariant manifolds, cohomology of invariant sets, covering relations, Conley index theory
\MSC[2010] 
37D05,  	
37D10, 	
37B30. 	
\end{keyword}
\end{frontmatter}


\section{Introduction}

The paper presents a topological construction that generalises the classical result for normally hyperbolic invariant manifolds (NHIMs) to the non-perturbative setting. Before outlining our results, we provide some context.

Let $M$ be a smooth manifold. A manifold $\Lambda\subset M,$ which is
invariant under a $C^{1}$ diffeomorphism $f:M\rightarrow M$ is said to be \emph{normally hyperbolic} if the
tangent bundle at $M$ restricted to $\Lambda$ admits a $df$-invariant splitting%
\begin{equation}
T_{\Lambda}M=T\Lambda\oplus E^{u}\oplus E^{s}\label{eq:splitting}%
\end{equation}
and there exist constants $C>0$ and $0\,<\lambda<\mu^{-1}<1$ (known as the
\emph{rates}) such that%
\begin{align}
\left\vert \left\vert df_{\theta}^{-k}\left(  x\right)  \right\vert
\right\vert  &  \leq C\lambda^{k}\left\vert \left\vert x\right\vert
\right\vert \text{ for }k\geq0\text{, }\theta\in\Lambda\text{, }x\in
E_{\theta}^{u}\nonumber\\
\left\vert \left\vert df_{\theta}^{k}\left(  y\right)  \right\vert
\right\vert  &  \leq C\lambda^{k}\left\vert \left\vert y\right\vert
\right\vert \text{ for }k\geq0\text{, }\theta\in\Lambda\text{, }y\in
E_{\theta}^{s},\label{eq:rate-cond}\\
\left\vert \left\vert df_{\theta}^{k}\left(  z\right)  \right\vert
\right\vert  &  \leq C\mu^{\left\vert k\right\vert }\left\vert \left\vert
z\right\vert \right\vert \text{ for }k\in\mathbb{Z}\text{, }\theta\in
\Lambda\text{, }z\in T_{\theta}\Lambda.\nonumber
\end{align}

The classical results on normally hyperbolic theory state that a NHIM is
robust under sufficiently small $C^{1}$ perturbation of the map \cite{HPS1977}. However, the splitting \eqref{eq:splitting} into contracting and expanding directions alone is not sufficient to guarantee persistence. The key requirement are the rate conditions
\eqref{eq:rate-cond}. In fact, Jarn\'{\i}k and Kurzweil \cite{JarnikKurzweil1969} gave an example showing that in the absence of rate conditions, the invariant manifold can break down under a small $C^0$ perturbation to an invariant set which is not even a topological manifold. Moreover, Ma\~n\'e \cite{Mane}
showed that persistent manifolds have to be normally hyperbolic. 

In spite of that, one would like to find out whether some properties of the original invariant manifold  persist without assuming rate conditions. One notable result in this direction is the work of Floer \cite{FloerPerThm}, where he considers an invariant manifold with a splitting \eqref{eq:splitting} and shows that after a sufficiently small $C^{0}$
perturbation of the system there exists an invariant set (not necessarily a
manifold), whose cohomology ring contains that of the original manifold as a subring. The disadvantage of this result is its perturbative formulation.

The main contribution of this paper is a generalization of Floer's result to a purely topological, non-perturbative setting. Our assumptions are based on those considered in \cite{CapKub} and expressed in terms of the so-called covering relations (also referred
to in the literature as correctly aligned windows). We will consider a continuous (not necessarily invertible) map $f: E \rightarrow E$ on a smooth Banach vector bundle $E=E^{u}\oplus E^{s}$, whose base is a smooth, compact and connected
manifold without boundary $\Lambda.$ In particular, we do not have to assume that $\Lambda$ is invariant with respect to $f$ or that it preserves the splitting of the bundle.

In \cite{CapKub} it is shown that suitable covering relations
ensure the existence of an invariant set, which intersects every fibre of the bundle $E$. Moreover, it is proven there that these covering relations are satisfied
in a natural way in the context of $C^{0}$ perturbations of NHIMs. We will show that the cohomology ring of this invariant set contains a subring isomorphic to the cohomology ring of $\Lambda$. This result can be applied to systems in which the relevant invariant manifolds arise in settings that are far beyond the reach of analytical methods. The covering relations are readily verifiable by computing bounds on the images of sets, a task that can be performed efficiently using interval arithmetic and computer-assisted tools \cite{KapelaWilczakZgliczynski2022, KapMrWilZg2021}. In this way, we provide a tool that can establish the existence of an invariant set together with a lower bound for its cohomology.

Even though our proof shares similarities with the ideas developed by Floer, there are some significant differences. Both proofs are Conley-theoretic, but Floer's argument reduces the problem to the continuous-time case by considering the suspension flow of the original diffeomorphism and using the Conley index theory for flows \cite{FloerConInd, ConIsolated}. We believe that this approach offers limited flexibility in our case. To obtain our result, we choose to work in the phase space of the original system and use the Conley index theory for discrete-time dynamical systems \cite{Mrozek1990}. Our main technical tool is the external product on the Conley index defined in the setting of maps on vector bundles.

The practical motivation for our work is that a smooth dynamical system, modeled by a map or an ODE, is an approximation of reality. The true model is typically a non-smooth perturbation of this. From this perspective, our method is applicable to problems in which the existence of a NHIM plays a key role.

The paper is organized as follows. Section \ref{sec:main_result} contains a precise statement of the main results with some consequences and examples. Section \ref{sec:preliminaries} contains all preliminary facts that are necessary in the paper. In Section \ref{sec:ext_prod}, we develop the external product on the Conley index for maps on vector bundles and prove its properties. The proof of the main theorem is contained in Section \ref{sec:proof_cont_thm}. The paper ends with a short Section \ref{sec:conclusions} containing conclusions and acknowledgements.

\section{Statement of the main result}
\label{sec:main_result}

Let us assume that $\Lambda$ is a compact smooth $c$-dimensional manifold
without boundary, $E^{u},E^{s}$ are smooth Banach vector bundles\footnote{For
preliminaries on Banach vector bundles refer to Section
\ref{sec:banach_vc_bundles}.} over $\Lambda$ of ranks $u$, $s$, respectively, and
\[
E=E^{u}\oplus E^{s}.
\]

Throughout the work we will use the following notation. We will write
$\pi:E\rightarrow\Lambda$ for the projection of the Banach vector bundle. For $x\in E_{\theta}^{u}$ and
$y\in E_{\theta}^{s}$ and for $\theta\in\Lambda$ we will write%
\begin{equation}
\left(  \theta;x,y\right)  \in E_{\theta} \label{eq:theta-notation}%
\end{equation}
to denote the point in $E$. We decide to emphasize that
the point $\left(  x,y\right) $ is in the fiber $E_{\theta}$ by adding
$\theta$ to the notation (\ref{eq:theta-notation}). For $x\in E_{\theta}^{u}$
and $y\in E_{\theta}^{s}$, we will write $\Vert x\Vert_{u}$ and $\Vert
y\Vert_{s}$ for the norms in $E_{\theta}^{u}$ and $E_{\theta}^{s}$, respectively.

We define the following sets
\begin{align}
D  &  :=\{(\theta;x,y)\in E\mid\theta\in\Lambda,\ \Vert x\Vert_{u}%
\leq1,\ \Vert y\Vert_{s}\leq1\},\label{eq:set_D}\\
D^{-}  &  :=\{(\theta;x,y)\in E\mid\theta\in\Lambda,\ \Vert x\Vert
_{u}=1,\ \Vert y\Vert_{s}\leq1\},\nonumber\\
D^{+}  &  :=\{(\theta;x,y)\in E\mid\theta\in\Lambda,\ \Vert x\Vert_{u}%
\leq1,\ \Vert y\Vert_{s}=1\}.\nonumber
\end{align}
We will also use the notation
\[
B_{\theta}^{u}:=\{x\in E_{\theta}^{u}\mid\Vert x\Vert_{u}\leq1\}
\]
to denote a closed unit ball in a fiber $E_{\theta}^{u}.$

The following definition is a modification of covering relations introduced
by Zgliczy\'{n}ski and Gidea \cite{GidZglicz}, adapted to a vector bundle
setting in \cite{CapKub}. As we mentioned in the Introduction, it will constitute the main assumption needed for our result.

\begin{definition}
\label{def:cov_relations} Assume that $f:E\rightarrow E$ is a continuous
map. 
We say that $D$ $f$-covers itself, denoted $D\overset{f}
{\Longrightarrow}D$, if the following conditions are satisfied:

\begin{itemize}
\item[1)] there exists a homotopy
\begin{equation}
h:[0,1]\times D\rightarrow E \label{eq:cov_rel_homotopy}%
\end{equation}
such that
\begin{align*}
h(0,\cdot)  &  =f(\cdot),\\
h([0,1]\times D^{-})\cap D  &  =\emptyset,\\
h([0,1]\times D)\cap D^{+}  &  =\emptyset;
\end{align*}

\item[2)] there exists a continuous map $\eta:\Lambda\rightarrow\Lambda$ such
that
\begin{equation}
\deg\eta\neq0 \label{eq:cov_rel_degree}%
\end{equation}
if $E^u$ is an orientable vector bundle and $\Lambda$ is an orientable manifold, or 
\begin{equation}
\deg_2\eta\neq0 \label{eq:cov_rel_degree_}%
\end{equation}
otherwise; moreover
\begin{itemize}
\item[a)] if $u>0$, then for any $\theta\in\Lambda$ there exists a linear map
$A_{\theta}:E_{\theta}^{u}\rightarrow E_{\eta(\theta)}^{u}$ such that
\begin{equation}
A_{\theta}(\partial B_{\theta}^{u})\subset E_{\eta(\theta)}^{u}\setminus
B_{\eta(\theta)}^{u}\quad(A_{\theta}\text{ is expanding})
\label{eq:cov_lin_isom}%
\end{equation}
and
\[
h(1,(\theta;x,y))=(\eta(\theta);A_{\theta}x,0)\in E_{\eta(\theta)};
\]

\item[b)] if $u=0$, then
\[
h(1,(\theta;y))=(\eta(\theta);0)\in E_{\eta(\theta)}=E_{\eta(\theta
)}^{s}.
\]

\end{itemize}
\end{itemize}
\end{definition}

Intuitively, in Definition \ref{def:cov_relations} we require that the map $f$
has a topologically expanding direction $x$, a topologically contracting
direction $y$ and the dynamics on $\Lambda$ is homotopic to the map with
a non-zero degree. Figure \ref{fig:covering} contains the picture illustrating the geometric meaning of a covering. For examples of both analytical and computer-assisted verification of covering relations, see \cite{CapKub}.

\begin{figure}
    \centering
    \includegraphics[width=0.8\linewidth]{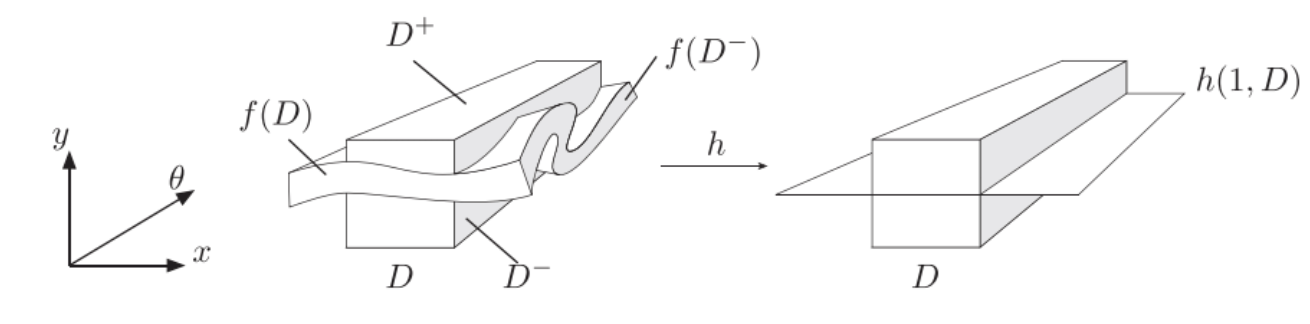}
    \caption{The geometric intuition behind covering relations. The picture is taken from \cite{CapKub}.}
    \label{fig:covering}
\end{figure}

Let us denote
\begin{equation*}
\operatorname{Inv}(D, f)=\left\{  p\in D\; \middle| \begin{array}{l}
\exists\left\{  q_{i}\right\}_{i\in
\mathbb{Z}} \subset D\text{ such that }q_{0}=p \\
\text{ and }f\left(q_{i}\right)
=q_{i+1}\text{ for all }i\in\mathbb{Z}
\end{array}\right\},
\end{equation*}
i.e. $\operatorname{Inv}(D, f)$ is a maximal invariant set of $f$ contained in $D$.

We are now ready to formulate the main result of this paper. 
\begin{theorem}
\label{thm:cont_thm}  Assume that $E^u$ is an orientable vector bundle and $\Lambda$ is an orientable manifold. Let $f:E\rightarrow E$ is a continuous map and
$D\overset{f}{\Longrightarrow}D$. Denote by $H^\ast(-; \mathbb{Q})$ the Alexander--Spanier cohomology functor with rational coefficients. Then the induced homomorphism
\begin{equation}
\tilde\pi^{\ast}:H^{\ast}(\Lambda; \mathbb{Q})\rightarrow H^{\ast}%
(\operatorname{Inv}(D, f); \mathbb{Q}) \label{eq:cont_thm_hom},
\end{equation}
where $\tilde\pi$ is a restriction of $\pi$ to $\operatorname{Inv}(D,f)$,
is injective.
In other words, the rational cohomology ring of $\operatorname{Inv}(D, f)$ has a subring isomorphic to the rational cohomology ring of $\Lambda$.
\end{theorem}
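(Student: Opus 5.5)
We will prove injectivity by a Conley-index argument on the pair $(D,D^-)$, with the external product supplying the compatibility between the cohomology of $\Lambda$ and the index.

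\emph{Step 1: $D$ is an isolating neighbourhood and $(D,D^-)$ is an index pair.} From condition 1) of Definition \ref{def:cov_relations}, $f(D)\cap D^+=\emptyset$ and $f(D^-)\cap D=\emptyset$. The first condition shows that no orbit inside $D$ reaches $D^+$ in forward time. The second shows that $D^-$ is an exit set. Hence $\operatorname{Inv}(D,f)$ is disjoint from $\partial D=D^-\cup D^+$, and $(D,D^-)$ is a (weak) index pair. Its discrete Conley index is the Leray reduction of the index map
\[
f_{(D,D^-)}^\ast:H^\ast(D,D^-)\to H^\ast(D,D^-).
\]
By the Thom isomorphism, $H^\ast(D,D^-)\cong H^{\ast-u}(\Lambda;\mathbb{Q})$ via the Thom class $\tau$ of $E^u$; this uses the orientability of $E^u$. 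The homotopy $h$ stays an index-pair homotopy throughout, since $h(t,D^-)\cap D=\emptyset$ and $h(t,D)\cap D^+=\emptyset$. By homotopy invariance, the index map of $f$ equals that of $h(1,\cdot)=(\eta;A,0)$. On the Thom space the latter acts as $\tau\smile\pi^\ast a\mapsto \pm\tau\smile\pi^\ast\eta^\ast a$; the $A_\theta$ are expanding isomorphisms, so they preserve the Thom class up to a locally constant sign. Since $\deg\eta\neq0$ and $\Lambda$ is an orientable closed manifold, Poincaré duality makes $\eta^\ast$ injective on $H^\ast(\Lambda;\mathbb{Q})$, hence bijective because the spaces are finite dimensional. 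The index map is therefore an automorphism, and the Conley index is isomorphic to $H^{\ast-u}(\Lambda;\mathbb{Q})$.

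\emph{Step 2: external product on the index.} The key tool, which will be developed in Section \ref{sec:ext_prod}, is a product
\[
H^\ast(N)\otimes H^\ast(N,N^-)\to H^\ast(N,N^-),\qquad (\alpha,\xi)\mapsto \alpha\smile\xi,
\]
for index pairs $(N,N^-)$ of $\operatorname{Inv}(D,f)$ inside $D$. Using continuity of Alexander--Spanier cohomology, it passes to a pairing
\[
H^\ast(\operatorname{Inv}(D,f))\otimes \operatorname{CH}^\ast\to \operatorname{CH}^\ast.
\]
We will check compatibility with pulling back classes from $\Lambda$ via $\pi$. The relation needed is $\pi^\ast a\smile\xi$, computed in the Thom model, together with its naturality under the index map. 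Naturality should hold up to replacing $a$ by $\eta^\ast a$, which is harmless because $\eta^\ast$ is invertible.

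\emph{Step 3: conclusion.} Let $a\in H^\ast(\Lambda;\mathbb{Q})$ with $\tilde\pi^\ast a=0$. Choose index pairs $(N_k,N_k^-)$ shrinking to $\operatorname{Inv}(D,f)$; by continuity, $\pi^\ast a|_{N_k}=0$ for some $k$. Take $\xi\in\operatorname{CH}^\ast$ corresponding to the Thom class $\tau$, which is a nonzero element of the Leray-reduced index. Then $\pi^\ast a\smile\xi$ corresponds, under the Thom isomorphism, to $\tau\smile\pi^\ast a\neq0$ whenever $a\neq0$. On the other hand, it equals $(\pi^\ast a|_{N_k})\smile\xi=0$. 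Hence $a=0$. The statement about subrings follows because $\tilde\pi^\ast$ is a ring homomorphism.

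The main obstacle is Step 2. We must show that the cup-product action of $H^\ast(N)$ on $H^\ast(N,N^-)$ is well defined on the Leray reduction and invariant along the chain of index pairs connecting $(D,D^-)$ with $(N_k,N_k^-)$. This means commuting with the index maps and the inclusion-induced isomorphisms, and it is delicate because $f$ is neither invertible nor fibre-preserving. We would first attempt to do this using the graph/shift-equivalence formulation of Mrozek's index, checking that each connecting map is a module homomorphism over $H^\ast(\Lambda)$ via $\pi^\ast$, with $\eta^\ast$ twisting.
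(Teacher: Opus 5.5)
Your overall plan is the paper's: the linear model plus the Thom isomorphism and $\deg\eta\neq0$, transport along $h$, and tautness at the end. Your Step 3 is essentially the paper's closing argument. But Steps 1--2, which carry all the weight, are not established.

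First, $(D,D^-)$ is not an index pair in the sense of Definition \ref{def:ind_pair}. Condition 3 fails, because $D\setminus D^-$ contains $D^+\setminus D^-\subset\partial D$. It also contains points with $\Vert x\Vert_u<1$ close to $D^-$, which $f$ maps out of $D$ by continuity since $f(D^-)\cap D=\emptyset$. Moreover $f(D)\not\subset D$, so $f^\ast_{(D,D^-)}$ is undefined. Mrozek's $T(P)=(E,E\setminus\operatorname{int}D)$ has the cohomology of $(D,\partial D)$, not of $(D,D^-)$, when $s>0$.

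You can manufacture an endomorphism of $H^\ast(D,D^-)$ by excising into $\bigl(D\cup(E\setminus U),\,D^-\cup(E\setminus(U\cup\operatorname{int}D))\bigr)$, where $U$ is an open neighbourhood of $D^+$ disjoint from $h([0,1]\times D)$ (Theorem \ref{thm:str_excision}). This even makes your homotopy $f\simeq h_1$ rigorous on a fixed pair. You would still need a comparison theorem: this weak pair must have the same Leray-reduced index as genuine index pairs $(N_k,N_k^-)$, compatibly with the $H^\ast(\Lambda)$-action. Without it, ``$\xi$ corresponding to the Thom class'' in Step 3 has no meaning. The vanishing $\pi^\ast a|_{N_k}=0$ lives on $(N_k,N_k^-)$, while the non-vanishing of $\tau\smile\pi^\ast a$ lives on $(D,D^-)$.

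Second, the module structure and its invariance, which you leave as a plan, is the actual content of the proof (all of Section \ref{sec:ext_prod}). The paper acts only by $\alpha\in\operatorname{gim}(\tilde f^\ast)=H^\ast(\Lambda)$, via $\alpha\ast[v]=[\pi^\ast_{P_1}\alpha\smile v]$ on $\operatorname{gim}(I_P')$.
\begin{itemize}
\item The action descends to the Leray reduction because of the twisted identity $I_P^k(\pi^\ast_{P_1}\alpha\smile v)=\pi^\ast_{P_1}(\tilde f^{\ast k}\alpha)\smile I_P^k(v)$. This uses $\pi\circ f\simeq\tilde f\circ\pi$ and the fact that $\pi^\ast\alpha$ is defined on all of $T(P_1)$.
\item It is independent of the index pair (Theorem \ref{thm:ind_ext_prod}). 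This uses intersections of index pairs, strong excision, and connecting maps $\varphi$ with $\pi\circ\varphi\simeq\pi$.
\item It is invariant under continuation (Theorem \ref{thm:cont_prop}). This uses Mrozek's continuation lemmas, whose connecting maps are inclusions and hence commute with $\pi$.
\end{itemize}
The paper never uses $(D,D^-)$. It computes on the genuine index pair of Lemma \ref{lem:cov_relations_ind_pair} for $h_1$, cut off at $\Vert y\Vert_s\le 1/2$ and $\Vert x\Vert_u\ge C$, which retracts onto $(D^u,\partial D^u)$. It then transports the free rank-one module to $f$ by continuation, $D$ being isolating for every $h_t$.

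Finally, your Step 2 pairing with $H^\ast(\operatorname{Inv}(D,f))$ is harder than needed. A general class on $N_k$ need not extend to $T(N_k)$, so no analogue of the twisted identity is available. Classes pulled back from $\Lambda$ suffice for Step 3.
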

The proof of Theorem \ref{thm:cont_thm} will be given in Section
\ref{sec:proof_cont_thm}. Before we discuss this result, we would like to address the choice of the coefficient ring for cohomology.
\begin{remark}
\label{rem:cont_thm_coeff}
    The coefficient ring for cohomology in Theorem \ref{thm:cont_thm} can be chosen to be any field of non-zero characteristic.
    If, in addition, $f$ is a homeomorphism, one can choose coefficients arbitrarily.
\end{remark}
\begin{remark}
\label{rem:cont_thm_coeff_2}
    Even though Theorem \ref{thm:cont_thm} is formulated for orientable $E^u$ and $\Lambda$, the same conclusion hold for cohomology with $\mathbb{Z}_2$ coefficients in the non-orientable case.
\end{remark}
    
    We would like to point out that the same result holds in a more general situation. We consider the set $D$ of the form \eqref{eq:set_D} for simplicity. Our result holds in any setting that is homeomorphic to the above one. In applications, one can consider $D$ to be some neighborhood of a zero section of an invariant manifold in its normal bundle. In particular, a result from \cite{CapKub} shows that normal hyperbolicity implies the covering relations in some neighborhood of the NHIM, so Theorem \ref{thm:cont_thm} can be applied in the case of $C^0$ (explicit, potentially large) perturbations of NHIMs.

    The injection \eqref{eq:cont_thm_hom} can be used to obtain lower bounds for some topological invariants, like the cup-length or the Lyusternik--Schnirelmann category of the invariant set (see Cornea et al. \cite{Cornea}). A particularly interesting consequence is a lower bound for the Lebesgue covering dimension\footnote{For a topological space $X$, a \emph{Lebesgue covering dimension} is the smallest number $k$ such that every open cover $\mathcal{U}$ of $X$ admits a refinement $\mathcal{V}$ of order $k+1$, i.e.  $V_1 \cap \ldots \cap V_{k+1} = \emptyset$ for any pairwise different open sets $V_1, \ldots, V_{k+1} \in \mathcal{V}$. This notion extends the usual dimension for smooth manifolds.}. Note that our result remains true if we use \v{C}ech cohomology, since it coincides with Alexander--Spanier cohomology on paracompact Hausdorff spaces (see Spanier \cite{SpanierAlgTop}, Ch. 6, Section 9). One can show (see, for example, Engelking \cite{Engelking1978}, Ch. 1, § 9) that for any compact metric space $X$ the condition $\check{H}^n(X) \neq 0$ implies that $\dim X \geq n,$ where $\dim$ denotes the Lebesgue covering dimension and $\check{H}^n$ is the $n$-th \v{C}ech cohomology group. Thus in view of Theorem \ref{thm:cont_thm}, we have an inequality $\dim \operatorname{Inv(D,f)} \geq \operatorname{dim \Lambda}$, even though one could expect non-smooth perturbations of invariant manifolds to have a dust-like structure.  

As a final remark, we emphasize that the use of Alexander--Spanier cohomology is necessary, as it captures the overall structure of the invariant set and ignores local pathologies. Indeed, Theorem \ref{thm:cont_thm} is not true if we replace Alexander--Spanier cohomology with singular cohomology. As a simple counterexample, consider 
$E=S^1\times\mathbb R$ and $D=S^1\times[-1,1]$; here we identify \(S^1\) with \(\mathbb R/\mathbb Z\). (In this example we will have a center coordinate $\theta$ on $S^1$, with a one dimensional topologically expanding coordinate $x$ in $\mathbb R$ and without the topologically contracting coordinate $y$.) Consider the Warsaw circle
\[
W=
\left\{\left(\theta,\frac12\sin\frac{1}{\theta}\right):
0<\theta<1\right\}
\;\cup\;
\left(\{0\}\times\left[-\frac12,\frac12\right]\right)
\]
wrapped around the cylinder, and the signed distance (see Figure \ref{fig:counterexample})
\[
\rho(z)=
\begin{cases}
\operatorname{dist}(z,W),&z\in U_+,\\
0,&z\in W,\\
-\operatorname{dist}(z,W),&z\in U_-.
\end{cases}
\]
We define the map $f:E\rightarrow E$ by the formula
\[
f(\theta,x)=\bigl(\theta,x+\rho(\theta,x)\bigr),
\]
and then $\operatorname{Inv}(D,f)=W$. It is a standard fact in algebraic topology that $H^1_{\text{sing}}(W;\mathbb{Q}) = 0,$ the trivial group (see Munkres \cite{munkres}, Corollary 73.5), hence \eqref{eq:cont_thm_hom} would be the zero map on singular cohomology.

\begin{figure}
    \centering
    \includegraphics[width=0.5\linewidth]{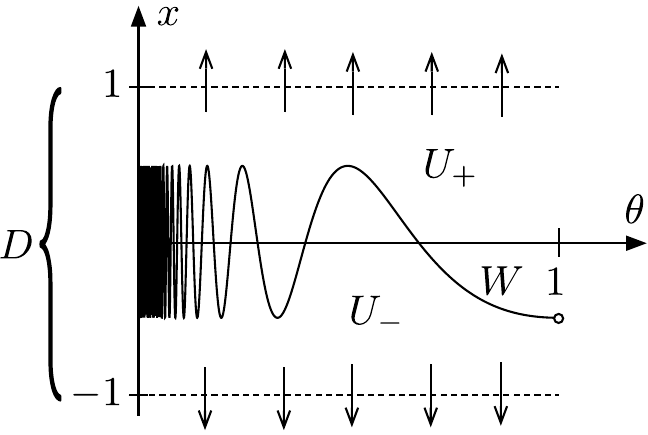}
    \caption{A counterexample showing that Theorem \ref{thm:cont_thm} is not true with singular cohomology.}
    \label{fig:counterexample}
\end{figure}


\section{Preliminaries}
\label{sec:preliminaries}

This section contains preliminary facts that will be used throughout the paper. Readers already familiar with some of the topics may safely skim or omit these
sections and refer to them as needed.

\subsection{Notations}
\label{subsec:notations}

We use the notation $\left(  X,A\right)  $ for a topological pair, i. e. $X$
is a topological space and $A\subset X$. We denote by $\partial_{X}A$,
$\operatorname{int}_{X}A$ and $\operatorname{cl}_{X}A$ the boundary, the
interior and the closure of $A$ in $X$, respectively. We omit the subscripts
if the ambient space is clear from the context. The space $X$ can be identified with the pair $(X, \emptyset).$ If $P = (P_1, P_2)$ and $Q=(Q_1,Q_2)$ are topological pairs, we write $P \subset Q$ if $P_i \subset Q_i$ for $i = 1, 2$.

Let $(X,A)$ and $(Y,B)$ be topological pairs. We write
\[
f:(X,A)\rightarrow(Y,B)
\]
if $f:X\rightarrow Y$ is a continuous map such that $f(A)\subset B$. If two maps $f$ and $g$ between topological spaces (or pairs) are homotopic, we will write $f \simeq g.$

Let us fix a commutative ring with identity $R$. By $H_{\ast}\left(
X,A;R\right)  $ we denote the singular homology graded $R$-module of the pair
$(X,A)$ with coefficients in $R.$ Similarly, by $H^{\ast}(X,A;R)$ we denote
the Alexander--Spanier cohomology graded $R$-module of the pair $(X,A)$ with
coefficients in $R.$ If $f:(X,A)\rightarrow(Y,B)$, then by
\begin{equation}
f^{\ast}:H^{\ast
}(Y,B;R)\rightarrow H^{\ast}(X,A;R) \label{eq:f*-cohomology-pair}%
\end{equation}
we mean a homomorphism induced by $f$ on cohomology. Similarly, $f_\ast$ means a homomorphism induced by $f$ on homology. 

If $f: P \rightarrow Q$ is a map of topological pairs (or spaces) and if $S \subset P, T \subset Q$ are topological pairs (or spaces) such that $f(S) \subset T$, then by $f_{S,T}$ we mean the map $f$ with the domain restricted to $S$ and the codomain restricted to $T$. If we only restrict the domain, we simply write $f_S$ instead of $f_{S,Q}$.

If $A\subset
B$, we will always use the notation
\begin{equation}
i_{A,B}:A\rightarrow B \label{eq:inclusion_map}%
\end{equation}
to denote the inclusion map. The same convention applies to the case when $A$
and $B$ are topological pairs.

Similarly,
\[
f_{\ast}:H_{\ast}(X,A;R)\rightarrow H_{\ast}(Y,B;R)
\]
denotes the homomorphism induced by $f$ on homology. We will write $f^{\ast
m}$ instead of $(f^{\ast})^{m}$ to ease the notation. When the coefficient
ring for (co)homology is clear from the context, we will use notations
$H^{\ast}(X,A)$ and $H_{\ast}\left(  X,A\right)  $.

If $\eta:\Lambda\rightarrow\Lambda$ is a (not necessarily smooth) map on a
smooth manifold $\Lambda$, then by $\deg_{2}\eta$ we denote its degree modulo
2. When $\Lambda$ is orientable, we denote by $\deg\eta$ its Brouwer degree (see Section \ref{sec:degree}.)

\subsection{Banach vector bundles\label{sec:banach_vc_bundles}}

We will use the language of Banach vector bundles in this paper, so we begin
by recalling basic definitions.

A \emph{vector bundle of rank }$n$\emph{\ over $\Lambda$} is a triple
$(E,\Lambda,\pi)$, where $E$ and $\Lambda$ are topological spaces and
$\pi:E\rightarrow\Lambda$ is a surjective continuous map satisfying the
following conditions

\begin{enumerate}
\item for all $\theta\in\Lambda$, the \emph{fiber} $E_{\theta}:=\pi
^{-1}(\theta)$ over $\theta$ is homeomorphic to $\mathbb{R}^{n}$,

\item for every $\theta\in\Lambda$ there exists an open neighborhood
$U_{\theta}$ of $\theta$ in $\Lambda$ and a homeomorphism
\[
\Phi_{\theta}:\pi^{-1}(U_{\theta})\rightarrow U_{\theta}\times\mathbb{R}^{n},
\]
called a \emph{local trivialization of $E$ over $U_{\theta}$}, such that

\begin{itemize}
\item $\pi_{0}\circ\Phi_{\theta}=\pi$, where $\pi_{0}:U_{\theta}%
\times\mathbb{R}^{n}\rightarrow U_{\theta}$ is the projection on $U_{\theta}$,

\item for every $\lambda\in U_{\theta}$ the restriction of $\Phi_{\theta}$ to
the fiber $E_{\lambda}$
\[
\Phi_{\theta}|_{E_{\lambda}}:E_{\lambda}\rightarrow\{\lambda\}\times
\mathbb{R}^{n}\simeq\mathbb{R}^{n}%
\]
is a vector space isomorphism. The set $U_{\theta}$ is called the \emph{base}
of the local trivialization $\Phi_{\theta}$.
\end{itemize}
\end{enumerate}

The space $E$ is called the \emph{total space} of the bundle, $\Lambda$ is
called its \emph{base}, and $\pi$ is its \emph{projection}.

When $\Phi_{\theta_{1}}:\pi^{-1}(U_{\theta_{1}})\rightarrow U_{\theta_{1}%
}\times\mathbb{R}^{n}$ and $\Phi_{\theta_{2}}:\pi^{-1}(U_{\theta_{2}%
})\rightarrow U_{\theta_{2}}\times\mathbb{R}^{n}$ are two local
trivializations of $E$ such that $U_{\theta_{1}}\cap U_{\theta_{2}}%
\neq\emptyset$, and $\lambda\in U_{\theta_{1}}\cap U_{\theta_{2}}$, the
function
\[
(\pi_{\mathbb{R}^{n}}\circ\Phi_{\theta_{2}}|_{E_{\lambda}})\circ
(\pi_{\mathbb{R}^{n}}\circ\Phi_{\theta_{1}}|_{E_{\lambda}})^{-1}%
:\mathbb{R}^{n}\rightarrow \mathbb{R}^n
\]
is called a \emph{transition function} between local trivializations.

Define the map%
\[
g_{\theta_{1},\theta_{2}}:U_{\theta_{1}}\cap U_{\theta_{2}}\rightarrow
GL\left(  n,\mathbb{R}\right)
\]
by the formula
\[
g_{\theta_{1},\theta_{2}}\left(  \lambda\right)  =(\pi_{\mathbb{R}^{n}}%
\circ\Phi_{\theta_{2}}|_{E_{\lambda}})\circ(\pi_{\mathbb{R}^{n}}\circ
\Phi_{\theta_{1}}|_{E_{\lambda}})^{-1}(\lambda),
\]
i. e. $g_{\theta_{1},\theta_{2}}$ sends each $\lambda$ the corresponding
transition function between local trivializations. The vector bundle
$(E,\Lambda,\pi)$ is called \emph{orientable} if
\[
\det\left(  g_{\theta_{1},\theta_{2}}\left(  \lambda\right)  \right)  >0
\]
for every $\theta_{1},$ $\theta_{2}$ and $\lambda\in U_{\theta_{1}}\cap
U_{\theta_{2}}.$

If we are given a vector bundle $\pi:E\rightarrow\Lambda$ with a fixed
collection of local trivializations $\{\Phi_{\theta}:\pi^{-1}(U_{\theta
})\rightarrow U_{\theta}\times \mathbb{R}^n\}$ whose bases form an open cover
$\mathcal{U}=\{U_{\theta}\}$ of $\Lambda$, then we call it a \emph{Banach
vector bundle} provided that all transition functions between local
trivializations with overlapping bases are isometries. If spaces $E$ and
$\Lambda$ are manifolds and $\pi$ is a smooth map, then we call it a
\emph{smooth Banach vector bundle}.

For Banach vector bundles we are able to introduce a meaningful notion of a
norm on fibers as follows. For every $v\in E$ such that $\pi(v)\in U_{\theta}%
$, where $U_{\theta}$ is trivialized by $\Phi_{\theta}$, we define
\[
\Vert v\Vert:=\Vert\pi_{\mathbb{R}^{n}}\circ\Phi_{\theta}(v)\Vert
_{\mathbb{R}^{n}},
\]
where $\Vert\cdot\Vert_{\mathbb{R}^{n}}$ is some norm on
$\mathbb{R}^{n}$. Since all transition functions between local trivializations
with overlapping bases are isometries, we see that $\Vert v\Vert$ does not
depend on the choice of $\theta$. We emphasize that the notation $\Vert
v\Vert$ should be always understood as the norm on the fiber $E_{\pi\left(
v\right)  }$, since $E$ is not a normed space and it makes no sense to talk
about the norm of an element of $E$.

Let $(E^{u},\Lambda,\pi^{u})$ and $(E^{s},\Lambda,\pi^{s})$ be smooth Banach
vector bundles of rank $u$ and $s$, respectively. A \emph{Whitney sum} of
$(E^{s},\Lambda,\pi^{s})$ and $(E^{u},\Lambda,\pi^{u})$ is a smooth Banach
vector bundle defined as follows. The total space is defined as%
\[
E:=E^{u}\oplus E^{s}:=\bigsqcup_{\theta\in\Lambda}E_{\theta}^{u}\oplus
E_{\theta}^{s},
\]
where $\oplus$ denotes the direct sum of vector spaces. In other words, the
elements of $E$ can be identified with triples $\left(  \theta;x,y\right)  $,
where $\theta\in\Lambda$ and $\left(  x,y\right)  \in$ $E_{\theta}^{u}\oplus
E_{\theta}^{s}$. The base space of the Whitney sum is $\Lambda$ and the
projection $\pi:E\rightarrow\Lambda$ is given by
\[
\pi\left(  \theta;x,y\right)  =\theta.
\]

Assume that $\left\{  \Phi_{\theta}^{u}:\left(  \pi^{u}\right)  ^{-1}\left(
U_{\theta}\right)  \rightarrow U_{\theta}\times\mathbb{R}^{u}\right\}  $ and
$\left\{  \Phi_{\theta}^{s}:\left(  \pi^{s}\right)  ^{-1}\left(  U_{\theta
}\right)  \rightarrow U_{\theta}\times\mathbb{R}^{s}\right\}  $ are
collections of local trivialisations for $(E^{u},\Lambda,\pi^{u})$ and
$(E^{s},\Lambda,\pi^{s})$, respectively. Let $W_{\theta}\subset\Lambda$ be an
open set which is contained in the domain of both $\Phi_{\theta}^{u}$ and
$\Phi_{\theta}^{s}.$ A local trivialization $\Phi_{\theta}:\pi^{-1}\left(
W_{\theta}\right)  \rightarrow W_{\theta}\times\mathbb{R}^{u+s}$ for the
Whitney sum can be defined as
\[
\Phi_{\theta}\left(  \lambda;x,y\right)  =\left(  \lambda,\left(
\pi_{\mathbb{R}^{u}}\circ\Phi_{\theta}^{u}\right)  \left(  x\right)  ,\left(
\pi_{\mathbb{R}^{s}}\circ\Phi_{\theta}^{s}\right)  \left(  y\right)  \right)
,
\]
where $\lambda\in W_{\theta},$ $x\in E_{\lambda}^{u}$ and $y\in E_{\lambda
}^{s}.$

\subsection{Alexander--Spanier cohomology theory}
\label{subsec:cohomology}
In our paper, we will work with the cohomology theory based on Alexander--Spanier cochains. An elementary exposition of this theory can be found in the book of Massey \cite{Massey1978}. Here, we only recall the basic facts and outline the differences between singular and Alexander--Spanier cohomology theories. In fact, they coincide on paracompact and locally contractible Hausdorff spaces (see Spanier \cite{SpanierAlgTop}, Ch. 6, Section 9).

The Alexander--Spanier cohomology theory satisfies the usual Eilenberg--Steenrod axioms for cohomology, i. e. exactness, homotopy, excision and dimension axioms. Just like in the singular theory, one can endow cohomology with the bilinear and natural cup product operation. We will be particularly interested in the relative cup product
$$\smile: H^\ast(X) \times H^*(X,A) \rightarrow H^\ast(X,A).$$
The naturality in the relative case means that 
$$f_{(X,A),(Y,B)}^\ast(u \smile v) = f^\ast_{X,Y}(u) \smile f^\ast_{(X,A),(Y,B)}(v)$$
for every continuous map $f: (X, A) \rightarrow (Y,B)$, $u \in H^\ast(Y)$ and $v \in H^\ast(Y,B).$

The excision axiom can be strengthened in the case of Alexander--Spanier cohomology theory, i. e. we have the following \textit{strong excision property}.

\begin{theorem}
[see Theorem 6.6.5 in \cite{SpanierAlgTop}]
\label{thm:str_excision} Assume that $X$ and $Y$ are paracompact Hausdorff spaces and let $A \subset X$ and $B \subset Y$ be closed subsets. If $\left(  X,A\right)  $ is a pair contained in
$\left(  Y,B\right)  $ and, moreover, $X\setminus A=Y\setminus B,$ then
inclusion induces an isomorphism%
\[
H^{\ast}\left(  Y,B\right)  \simeq H^{\ast}\left(  X,A\right).
\]
\end{theorem}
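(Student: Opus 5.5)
The plan is to reduce strong excision to ordinary excision together with the \emph{tautness} property of Alexander--Spanier cohomology. Tautness says that a closed subset $C$ of a paracompact Hausdorff space $Z$ is taut, that is,
\[
H^{\ast}(C)\cong\varinjlim_{N}H^{\ast}(N),
\]
where $N$ ranges over the (closed) neighbourhoods of $C$ in $Z$. Combining this with the five lemma on the long exact sequences of pairs, one gets the relative version
\[
H^{\ast}(Z,C)\cong\varinjlim_{N}H^{\ast}(Z,N).
\]
I would take both statements as the standard input. They are proved with Alexander--Spanier cochains by extending locally defined cochains, using a locally finite refinement and a partition of unity.

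First I set up the geometry. Since $X\setminus A=Y\setminus B$, we have $X=A\cup(Y\setminus B)$ and $Y\setminus X=B\setminus A\subset B$. Now let $N$ be a closed neighbourhood of $B$ in $Y$. Then $\operatorname{cl}_Y(Y\setminus X)\subset B\subset\operatorname{int}_Y N$. Ordinary excision of the set $Y\setminus X$ from the pair $(Y,N)$ therefore gives an isomorphism
\[
H^{\ast}(Y,N)\cong H^{\ast}(X,X\cap N),
\]
induced by inclusion. Moreover, $X\cap N$ is a neighbourhood of $A$ in $X$. These isomorphisms are compatible with the restriction maps as $N$ shrinks, so they pass to the direct limit:
\[
\varinjlim_{N}H^{\ast}(Y,N)\cong\varinjlim_{N}H^{\ast}(X,X\cap N).
\]

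Next I identify both limits. The left-hand limit is $H^{\ast}(Y,B)$ by tautness of $B$ in $Y$. For the right-hand limit, I show that the traces $X\cap N$ are cofinal among the neighbourhoods of $A$ in $X$. Granting this, the right-hand limit equals $\varinjlim_W H^{\ast}(X,W)$ over neighbourhoods $W$ of $A$ in $X$. Since $A$ is closed in the paracompact Hausdorff space $X$, tautness of $A$ in $X$ identifies this limit with $H^{\ast}(X,A)$. A naturality check then shows that the composite isomorphism $H^{\ast}(Y,B)\cong H^{\ast}(X,A)$ is the one induced by inclusion.

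I expect the cofinality step to be the main obstacle. Given a neighbourhood $W$ of $A$ in $X$, I must produce a neighbourhood $N$ of $B$ in $Y$ with $X\cap N\subset W$. The naive choice $N=W\cup B$ can fail to be a neighbourhood of the points of $A$, because $X$ need not be closed in $Y$. To handle this, I would first shrink $W$ to an $X$-open set $O\supset A$. The closed sets $A$ and $X\setminus O$ are disjoint, and $X\setminus O$ is closed in $Y$ as well, since it lies in the open set $Y\setminus B$ and its closure cannot meet $B$ away from $A$. I would then use normality of $Y$, which follows from paracompact Hausdorff, to separate $B$ from $X\setminus O$ by disjoint open sets, and take $N$ to be the closure of the open set containing $B$, chosen to avoid $X\setminus O$. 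If the closedness claim for $X\setminus O$ turns out to be delicate, I would instead argue directly at the cochain level. In that approach one shows that the Alexander--Spanier cochains on $Y$ that vanish near $B$ correspond, modulo locally zero cochains, to those on $X$ that vanish near $A$, using the same neighbourhood comparison.
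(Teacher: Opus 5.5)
There is a genuine gap at the cofinality step. It cannot be closed, because the statement as transcribed here is false.

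Your justification that $X\setminus O$ is closed in $Y$ gets the geometry backwards. Since $\operatorname{cl}_Y(X\setminus O)\cap X=\operatorname{cl}_X(X\setminus O)=X\setminus O$, the closure does avoid $A$. But it may well meet $B\setminus A=Y\setminus X$, which is exactly the part of $B$ ``away from $A$''.

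Here is a concrete counterexample. Let $Y=[0,1]$, $B=[0,\tfrac12]$, $X=\{0\}\cup(\tfrac12,1]$ and $A=\{0\}$. Both spaces are metrizable, $A$ and $B$ are closed, $A\subset B$, and $X\setminus A=(\tfrac12,1]=Y\setminus B$. Alexander--Spanier $H^0$ of a pair is the group of locally constant functions vanishing on the subspace. Hence $H^0(Y,B)=0$, whereas $H^0(X,A)\cong\mathbb{Z}$, generated by the indicator function of $(\tfrac12,1]$.

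In your argument, take $W=O=\{0\}$. Every neighbourhood $N$ of $B$ in $Y$ contains an interval $(\tfrac12,\tfrac12+\varepsilon)\subset X\setminus W$. So the traces $X\cap N$ are not cofinal, and your direct limit computes $H^{\ast}(Y,B)=0$ rather than $H^{\ast}(X,A)$. The cochain-level fallback fails for the same reason, since the conclusion itself is false.

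What is missing is a hypothesis, not an idea. As I recall, Spanier's Theorem 6.6.5 is stated for a \emph{closed} map $(X,A)\to(Y,B)$ that is bijective from $X\setminus A$ onto $Y\setminus B$. For an inclusion, being a closed map means that $X$ is closed in $Y$. The statement above omits this, and the paper gives no proof, only the citation. The omission is harmless for the paper: in its only use, the proof of Theorem~\ref{thm:ind_ext_prod}, the set $S_1=R_1\cap M$ is compact and hence closed in $R_1$.

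Once you add this hypothesis, your proof is correct and is essentially the standard tautness-plus-excision argument:
\begin{itemize}
\item $X\setminus O$ is closed in $X$, hence closed in $Y$.
\item It is disjoint from $B$, because it lies in $X\setminus A=Y\setminus B$.
\item Normality of $Y$ gives an open $V\supset B$ with $\operatorname{cl}_Y V\cap(X\setminus O)=\emptyset$.
\item Then $N=\operatorname{cl}_Y V$ satisfies $X\cap N\subset O\subset W$.
\end{itemize}
Your other steps go through as you describe: excising $Y\setminus X$ from $(Y,N)$, obtaining relative tautness from the five lemma and exactness of direct limits, and the final naturality check.
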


Alexander--Spanier cohomology theory has also a very useful \emph{tautness
property}, which we now recall. It will be used in the proof of the main theorem. 

Let $X$ be a locally compact Hausdorff space and let $A\subset X$. We denote by $\mathcal{N}(A)$ of all closed neighbourhoods of $A$, i.e. closed sets $B$ satisfying $A \subset \operatorname{int}B$.

A subfamily $\mathcal{M}\subset\mathcal{N}(A)$ is called \emph{cofinal} if for
every $\Omega\in\mathcal{N}(A)$ there exists $\Omega^{\prime}\in\mathcal{\mathcal{M}}$
such that $\Omega^{\prime}\subset \Omega.$

\begin{theorem}
[see Theorem 6.6.2 in \cite{SpanierAlgTop}]\label{thm:tautness} Suppose that $A$ is a closed subset of a paracompact Hausdorff space. If the subfamily $\mathcal{M}$ is cofinal in the family of all closed neigbhorhoods of $A$, then the pair 
$$\left(H^{\ast}%
(A),\{i_{A,\Omega}^{\ast}\}_{\Omega\in\mathcal{M}}\right)$$ is the direct limit of the direct system
$$\left\{(H^{\ast}(\Omega),\{i_{\Omega,\Omega^{\prime}}^{\ast}\}_{\Omega^{\prime
}\in\mathcal{M}})\right\}_{\Omega\in\mathcal{\mathcal{M}}},$$ which means that
\begin{itemize}
    \item for all $v\in H^{\ast}(A)$ there exists $\Omega \in \mathcal{M}$ and $v' \in H^{\ast}(\Omega)$ such that $i_{A,\Omega}^\ast(v')= v,$
    \item for all $\Omega \in \mathcal{M}$ and $v \in H^\ast(\Omega)$ the condition $i^\ast_{A,\Omega}(v)= 0$ implies that there exists $\Omega' \in \mathcal{M}$ such that $\Omega'\subset \Omega$ and $i^*_{\Omega',\Omega}(v) = 0.$
\end{itemize}
\end{theorem}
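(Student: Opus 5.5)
Since this is the tautness property of Alexander--Spanier cohomology, I would prove it directly at the cochain level. Recall that an Alexander--Spanier $q$-cochain on a space $Y$ is a function $\varphi:Y^{q+1}\to G$. Such a cochain is \emph{locally zero} if there is an open cover $\mathcal{U}$ of $Y$ with $\varphi$ vanishing on every $(q+1)$-tuple contained in a single member of $\mathcal{U}$. The cohomology $H^\ast(Y)$ is the cohomology of the complex of all cochains modulo the locally zero ones. The direct limit $\varinjlim_{\Omega}H^\ast(\Omega)$ over a cofinal family $\mathcal{M}$ is the same as the limit over all of $\mathcal{N}(A)$. So it suffices to prove the two bullet points for $\mathcal{N}(A)$ and then pass to smaller members of $\mathcal{M}$ using cofinality and naturality of $i^\ast$.

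\textbf{Key device: a pointwise retraction.} Let $\mathcal{U}$ be an open cover of $A$ witnessing that some cochain is locally zero on $A$. Extend each $U\in\mathcal{U}$ to an open $U'\subset X$ with $U'\cap A=U$, and add $X\setminus A$ to obtain an open cover of $X$. Since $X$ is paracompact Hausdorff, this cover has an open star-refinement $\mathcal{V}$. Let $W$ be the union of those $V\in\mathcal{V}$ meeting $A$; then $W$ is an open neighbourhood of $A$. Choose any (not necessarily continuous) function $r:W\to A$ with $r|_A=\mathrm{id}$ and with $x$ and $r(x)$ lying in a common $V\in\mathcal{V}$. If $x_0,\dots,x_q$ lie in one $V\in\mathcal{V}$, then $r(x_0),\dots,r(x_q)$ lie in $\operatorname{st}(V,\mathcal{V})$. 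Because $\operatorname{st}(V,\mathcal{V})$ meets $A$, it is contained in some $U'$ rather than only in $X\setminus A$; I would arrange this by first intersecting the $U'$-sets appropriately, and handling it is part of the obstacle below. Hence $r(x_0),\dots,r(x_q)$ lie in a single $U\in\mathcal{U}$.

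\textbf{Surjectivity.} Take a cocycle $\varphi$ on $A$ representing $v$. Then $\delta\varphi$ is locally zero with respect to some cover $\mathcal{U}$ of $A$. Build $W$ and $r$ as above and set $\varphi'=\varphi\circ r^{\times(q+1)}$ on $W^{q+1}$. Then $\delta\varphi'=(\delta\varphi)\circ r^{\times(q+2)}$, which vanishes on $\mathcal{V}$-small tuples, so $\varphi'$ is a cocycle on $W$. Its restriction to $A$ is exactly $\varphi$. Since $X$ is paracompact Hausdorff, hence normal, there is a closed neighbourhood $\Omega\subset W$ of $A$. The class $v'=[\varphi'|_{\Omega}]$ then satisfies $i^\ast_{A,\Omega}(v')=v$.

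\textbf{Injectivity.} Let $v=[\varphi]\in H^\ast(\Omega)$ with $\varphi|_A=\delta\psi+\lambda$, where $\lambda$ is locally zero on $A$. Apply the same construction inside $\Omega$ (a closed subspace, hence paracompact Hausdorff, with $A$ closed in it) to the cover witnessing that $\lambda$ is locally zero. Replace $\psi$ by $\psi\circ r$. The cochain $\varphi-\delta(\psi\circ r)$ agrees with $\varphi-(\varphi|_A)\circ r+\lambda\circ r$. The term $\lambda\circ r$ is locally zero on $W$. The difference $\varphi-\varphi\circ r$ is handled by the standard chain homotopy $D$, defined by $(D\varphi)(x_0,\dots,x_{q-1})=\sum_i\pm\varphi(r x_0,\dots,r x_i,x_i,\dots,x_{q-1})$. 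This shows $\varphi-\varphi\circ r=\delta D\varphi+D\delta\varphi$ modulo locally zero cochains, and $\delta\varphi=0$. Therefore $\varphi$ becomes a coboundary on some closed neighbourhood $\Omega'\subset W\cap\Omega$, so $i^\ast_{\Omega',\Omega}(v)=0$.

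\textbf{Main obstacle.} The delicate step is the paracompactness argument producing $\mathcal{V}$ and $r$ so that small tuples near $A$ are sent by $r$ into a single member of the original cover. This needs a star-refinement together with care that stars of sets meeting $A$ do not fall only inside $X\setminus A$. I would first shrink the extended sets $U'$ so that the refinement only uses them near $A$, and then check the homotopy formula for $D$ modulo locally zero cochains.
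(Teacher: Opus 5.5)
The paper gives no proof of its own here; it quotes the result from Spanier (Theorem 6.6.2). Your argument is the standard direct proof of tautness, and it goes through with two corrections. It builds a discontinuous near-retraction $r\colon W\to A$ from a star-refinement and pulls it back on Alexander--Spanier cochains, which is legitimate because $\delta$ commutes with composition by arbitrary functions. It then compares $r$ with the identity via the prism operator $D$.

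First, the step you call the main obstacle is already settled by your own sentence, and no shrinking of the sets $U'$ is needed. The star $\operatorname{st}(V,\mathcal{V})$ lies in some member $O$ of $\{U'\}\cup\{X\setminus A\}$. This $O$ contains $r(x_0)\in A$, so $O\neq X\setminus A$. Hence $O=U'$ for some $U\in\mathcal{U}$, and all $r(x_i)$ lie in $U'\cap A=U$.

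Second, in the injectivity part you may not assume $\delta\varphi=0$. The complex of all functions $Y^{q+1}\to G$ is acyclic in positive degrees: contract with $\varphi\mapsto\varphi(p,\cdot)$ for a fixed $p$. So a cochain with $\delta\varphi=0$ represents $0$. In general you only know that $\delta\varphi$ vanishes on $\mathcal{U}''$-small tuples for some open cover $\mathcal{U}''$ of $\Omega$.

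The prism identity $\varphi-\varphi\circ r=\delta D\varphi+D\delta\varphi$ holds exactly, not just modulo locally zero cochains. So what you must check is that $D\delta\varphi$ is locally zero on $W$. This holds if you take $\mathcal{V}$ to be a star-refinement of the common refinement $\{U'\cap U''\}\cup\{(\Omega\setminus A)\cap U''\}$ of $\Omega$. Then for $x_0,\dots,x_q$ in one $V\in\mathcal{V}$, every tuple $(rx_0,\dots,rx_i,x_i,\dots,x_q)$ lies in $\operatorname{st}(V,\mathcal{V})$, hence in a single $U''$. The argument of the first point, applied to this refinement, still makes $\lambda\circ r$ locally zero.

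With this choice, $\varphi|_W-\delta(\psi\circ r+D\varphi)=\lambda\circ r+D\delta\varphi$ is locally zero on $W$. Shrinking to a closed neighbourhood of $A$ inside the open set $W\cap\operatorname{int}\Omega$, and then to a member of $\mathcal{M}$ by cofinality, completes the proof.
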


\subsection{Cohomology of vector bundles}
\label{subsec:cohomology_bundles}
We briefly recall here certain facts concerning the cohomology of vector bundles and their
so-called Thom spaces. 

Let $\left(  E\,,\Lambda,\pi\right)  $ be a smooth Banach vector bundle of rank
$n$ over a compact and connected manifold $\Lambda$. We define the sets%
\begin{align*}
D_E   &  =\left\{  \left(  \theta;x\right)  \in E\mid\left\vert
\left\vert x\right\vert \right\vert _{\mathbb{R}^{n}}\leq1\right\}  ,\\
\partial D_E   &  =\left\{  \left(  \theta;x\right)  \in
E\mid\left\vert \left\vert x\right\vert \right\vert _{\mathbb{R}^{n}%
}=1\right\}  ,
\end{align*}
where $\left\vert \left\vert \cdot\right\vert \right\vert _{\mathbb{R}^{n}}$
denotes the norm on the fiber (see Section \ref{sec:banach_vc_bundles}). The
pair $\left(  D_E  ,\partial D_E  \right)  $ is
called \textit{the Thom space} of $\left(  E\,,\Lambda,\pi\right).$ The following \textit{Thom isomorphism theorem} will be very important in the proof of our main result.

\begin{theorem}
[see Theorem 11.3 and Lemma 11.5 in \cite{BredonTopGeo}%
]\label{thm:thom_isom} Let $\Lambda$ be a connected closed manifold and let $\left(  E\,,\Lambda,\pi\right)  $ be a smooth Banach vector bundle of rank
$n$. Then there
exists a cohomology class $v_{T}$ $\in H^{n}\left(  D_E,\partial D_E\right)  $
such that the map%
\[
H^{k}\left(  \Lambda\right)  \ni \alpha\mapsto\pi_{D_E}^{\ast}\left(  \alpha\right)
\smile v_{T}\in H^{k+n}\left(  D_E,\partial D_E\right)  ,
\]
where $\smile:H^{\ast}\left(  D_E\right)  \times H^{\ast}\left(  D_E,\partial
D_E\right)  \rightarrow H^{\ast}\left(  D_E,\partial D_E\right)$ is the relative
cup product, is an isomorphism. The coefficient ring is arbitrary if $E$
is an orientable vector bundle or $\mathbb{Z}_{2}$ otherwise.

Moreover, for every $\theta\in\Lambda,$ the cohomology class $v_{T}|_{\theta
}=i_{\theta}^{\ast}\left(v_{T}\right)  \in H^{n}\left(  D_{E,\theta},\partial
D_{E,\theta}\right),$ where $D_{E,\theta} = D_E \cap E_\theta$, $\partial D_{E,\theta} = \partial D_E \cap E_\theta$ and $i_{\theta}:\left(  D_{E,\theta},\partial D_{E,\theta
}\right)  \rightarrow\left(  D_E,\partial D_E\right)$ is the inclusion map, is a
generator.
\end{theorem}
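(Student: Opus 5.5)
The statement is the classical Thom isomorphism, so I would prove it by the standard Mayer--Vietoris induction over a trivializing cover, in the Leray--Hirsch style. First I would reduce to a familiar setting. All spaces involved ($\Lambda$, $D_E$, $\partial D_E$, and their restrictions to open or closed subsets of $\Lambda$) are manifolds with boundary, hence paracompact, Hausdorff and locally contractible. Therefore Alexander--Spanier cohomology agrees with singular cohomology on them, by the comparison recalled in Section \ref{subsec:cohomology}, and I may work singularly, with Mayer--Vietoris and excision freely available. Next, in a local trivialization $\Phi_\theta$ over $U_\theta$, the unit ball of the fiber norm is a compact convex body with $0$ in its interior. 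Radial rescaling therefore gives a fiberwise homeomorphism of pairs $(D_E|_{U_\theta},\partial D_E|_{U_\theta})\cong U_\theta\times(B^n,S^{n-1})$. Because the transition functions are isometries, these identifications are compatible on overlaps.

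The second step is to construct $v_T$. Each fiber satisfies $H^n(D_{E,\theta},\partial D_{E,\theta})\cong H^n(B^n,S^{n-1})\cong R$. The data of these generators form a locally constant system over $\Lambda$. In the orientable case, the condition $\det g_{\theta_1,\theta_2}>0$ says that the transition maps preserve the chosen fiber generators, so a coherent choice of generators $u_\theta$ exists. Over $\mathbb{Z}_2$ no choice is needed, since the generator is unique. Locally, the class $p_2^\ast(\text{generator})$ on $U\times(B^n,S^{n-1})$ restricts to $u_\theta$ on every fiber.

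The key claim I would prove, for every open $U\subset\Lambda$ that is a finite union of convex (contractible) trivializing charts, is the following:
\begin{itemize}
\item[(a)] $H^k(D_E|_U,\partial D_E|_U)=0$ for $k<n$;
\item[(b)] there is a unique class in degree $n$ restricting to $u_\theta$ on every fiber over $U$;
\item[(c)] the map $\alpha\mapsto\pi^\ast\alpha\smile v_U$ is an isomorphism $H^k(U)\to H^{k+n}(D_E|_U,\partial D_E|_U)$.
\end{itemize}
For a single chart, these follow from the relative Künneth formula for $U\times(B^n,S^{n-1})$. Here $H^\ast(B^n,S^{n-1})$ is free of rank one, so there is no Tor term.

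For the induction step, $U=V\cup W$, I would use the relative Mayer--Vietoris sequences for $D_E|_{V}$, $D_E|_{W}$ and $D_E|_{V\cap W}$. Property (a) together with exactness glues $v_V$ and $v_W$ into a class $v_U$: the gluing is possible because they agree on $V\cap W$ by uniqueness, and the result is unique because $H^{n-1}(D_E|_{V\cap W},\partial D_E|_{V\cap W})=0$. Cupping with $v_\bullet$ gives a map from the Mayer--Vietoris sequence of the base to that of the pair, shifted by $n$. This map of sequences commutes up to sign, because the coboundary satisfies $\delta(a\smile v)=\delta a\smile v$ when $v$ is a restriction of a global class. The five lemma then yields (c). Since $\Lambda$ is compact, finitely many charts suffice, and a good cover can be chosen so that all intersections stay in the class being inducted over. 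This gives the theorem with $v_T=v_\Lambda$. The statement that $v_T|_\theta$ is a generator is built into the construction.

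I expect the main obstacle to be the gluing step together with the sign-commutativity of the ladder of Mayer--Vietoris sequences. One has to check carefully that the relative cup product $H^\ast(D_E)\times H^\ast(D_E,\partial D_E)\to H^\ast(D_E,\partial D_E)$ is natural with respect to the Mayer--Vietoris connecting homomorphism, which requires excisive couples. Excisiveness holds here because $V$ and $W$ are open in $\Lambda$, so their preimages are open in $D_E$. Connectedness of $\Lambda$ is used only to make the coherent orientation choice global.
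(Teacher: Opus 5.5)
Your proof is correct. The paper gives no argument of its own for this statement (it is quoted from Bredon), so there is no internal proof to match. What you wrote is the standard self-contained Mayer--Vietoris proof of the Thom isomorphism, and the two key steps are handled correctly: gluing $v_V$ and $v_W$ using uniqueness over $V\cap W$ together with the vanishing of $H^{n-1}$ there, and applying the five lemma to the ladder of Mayer--Vietoris sequences, whose connecting square commutes because $\delta(\pi^\ast a\smile v_{V\cap W})=\delta(\pi^\ast a)\smile v_U$.

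Compared with the citation, your route makes explicit that only two hypotheses are used: compactness of $\Lambda$, and local triviality with isometric (and, in the oriented case, orientation-preserving) transition maps. Connectedness and orientability of $\Lambda$ play no role, which matches the claim that any coefficient ring works once $E$ is orientable. Even your remark that connectedness is needed to globalize the orientation is unnecessary, since the positive-determinant atlas already does this.

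Two simplifications are available. First, contractible charts and a good cover are not needed. For \emph{any} open $U$ over which $E$ is trivial, the map $\alpha\mapsto\pi^\ast\alpha\smile p_2^\ast u$ is already an isomorphism $H^k(U)\to H^{k+n}(U\times(B^n,S^{n-1}))$ for every coefficient ring. Since open subsets of trivializing sets are trivializing, induction on the number $m$ of sets in a finite trivializing cover closes via $V\cap W=\bigcup_{i<m}(U_i\cap U_m)$, with no use of smoothness. Second, your parenthetical assertion that restrictions of $D_E$ to arbitrary closed subsets of $\Lambda$ are manifolds with boundary is false. It is harmless, though, since only open restrictions enter the argument.
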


\subsection{Brouwer degree}
\label{sec:degree}
Assume that $\Lambda$ is a connected compact smooth manifold of dimension $c$. Recall that the homology group $H_c(\Lambda; R)$ is isomorphic to $R$ if $\Lambda$ is orientable. Thus if $f:\Lambda\rightarrow\Lambda$ is a continuous map and
$\Lambda$ is orientable, then the induced homomorphism%
\[
f_{\ast}:H_{c}\left(  \Lambda;R\right)  \rightarrow H_{c}\left( \Lambda;R\right)
\]
is given by $f_{\ast}\left(  x\right)  =\left(  \deg f\right)  x,$ where $\deg
f\in R$ is called the \textit{degree} \textit{of }$f$. In the non-orientable
case we can define $\deg_{2}f\in\mathbb{Z}_{2}$ in a similar way. For a
detailed discussion of the degree, see Hatcher \cite{MR1867354}. A more
geometric interpretation of a degree can be found in the book of Milnor
\cite{Milnor}. 

We will need two simple facts related to the degree. For the lack of a reference, we provide a simple proof of the second fact.

\begin{proposition}
[Proposition 7.1 in \cite{BredonTopGeo}]\label{thm:lin_isom_degree} If $A:\mathbb{R}%
^{n}\rightarrow\mathbb{R}^{n}$ is a linear isomorphism, then the map induced
by $A$ on one-point compactification on $\mathbb{R}^{n}$ (i. e. the unit
sphere $\mathbb{S}^{n}$) has degree equal to the sign of $\det A$.
\end{proposition}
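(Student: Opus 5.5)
The statement to prove is Proposition~\ref{thm:lin_isom_degree}: a linear isomorphism $A:\mathbb{R}^n\rightarrow\mathbb{R}^n$ extends to the one-point compactification $\mathbb{S}^n=\mathbb{R}^n\cup\{\infty\}$ by $\hat A(\infty)=\infty$, and $\deg\hat A=\operatorname{sign}\det A$. The plan is to reduce to two normal forms by a homotopy argument. Since $A$ is proper (it is a homeomorphism of $\mathbb{R}^n$), $\hat A$ is a homeomorphism of $\mathbb{S}^n$, so its degree is $\pm1$. Moreover, any path $A_t$ in $GL(n,\mathbb{R})$ induces a homotopy $\hat A_t$ of maps $\mathbb{S}^n\rightarrow\mathbb{S}^n$. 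Continuity at $\infty$ holds uniformly in $t$ because $\|A_t x\|\geq \|A_t^{-1}\|^{-1}\|x\|$ and $t\mapsto\|A_t^{-1}\|$ is bounded on $[0,1]$. Since degree is a homotopy invariant, $\deg\hat A$ depends only on the path component of $A$ in $GL(n,\mathbb{R})$.

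The key step is the classical fact that $GL(n,\mathbb{R})$ has exactly two path components, determined by the sign of the determinant. To prove it, I will use the polar decomposition $A=PO$, with $P$ positive definite and $O$ orthogonal. Then $P_t=(1-t)I+tP$ stays positive definite, so it connects $P$ to $I$ inside $GL(n,\mathbb{R})$. Next, $O$ is block diagonal in a suitable orthonormal basis, with rotation blocks and $\pm1$ entries. Each rotation angle can be shrunk to $0$, and each pair of $-1$ entries forms a rotation by $\pi$, which can likewise be rotated to the identity. This connects $A$ to either $I$ or the reflection $R=\operatorname{diag}(-1,1,\dots,1)$, according to whether $\det A>0$ or $\det A<0$.

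It remains to compute the degrees of the two normal forms. Clearly $\deg\hat I=1$. For $\hat R$, the map is the reflection of $\mathbb{S}^n$ across an equatorial hyperplane, under the standard identification of $\mathbb{R}^n\cup\{\infty\}$ with the sphere by stereographic projection, which is equivariant for linear maps preserving the last axis. A reflection of a sphere has degree $-1$; this is standard (e.g.\ Hatcher), and can be verified by taking the $\Delta$-complex structure of two $n$-simplices glued along their boundary, which the reflection swaps, so the fundamental class changes sign. This gives $\deg\hat A=\operatorname{sign}\det A$.

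The main obstacle is mild. It is checking that the homotopy $\hat A_t$ is continuous at $\infty$ and that stereographic projection intertwines $\hat R$ with a genuine reflection. The uniform lower bound on $\|A_t x\|/\|x\|$ handles the first point, and the second is a direct computation.
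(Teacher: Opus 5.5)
The paper gives no proof of this proposition; it only cites Bredon, so there is no in-paper argument to compare against. Your proof is the standard one and it is correct. You deform $A$ inside $GL(n,\mathbb{R})$, via the polar decomposition and then the normal form of an orthogonal matrix, to $I$ or to a hyperplane reflection. The induced maps $\hat A_t$ then form a homotopy on $\mathbb{S}^n$; your uniform bound $\|A_tx\|\geq\|A_t^{-1}\|^{-1}\|x\|$ is exactly what gives continuity at $\infty$. You finish with $\deg\hat I=1$ and degree $-1$ for a reflection.

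Two points need tidying up.

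First, the block-diagonal form of $O$ lives in a rotated orthonormal basis. In standard coordinates your path therefore ends at $QRQ^{-1}$ with $Q$ orthogonal, not at $R=\operatorname{diag}(-1,1,\dots,1)$ itself. This is harmless: $\widehat{QRQ^{-1}}=\hat Q\circ\hat R\circ\hat Q^{-1}$ and degree is multiplicative, so $\deg\widehat{QRQ^{-1}}=\deg\hat R$. Alternatively, replace $Q$ by $QR$ if necessary to get $Q\in SO(n)$. Then use the connectedness of $SO(n)$, which your block argument also yields.

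Second, stereographic projection intertwines a linear map $B$ of $\mathbb{R}^n$ with $B\oplus 1$ on $\mathbb{S}^n\subset\mathbb{R}^{n+1}$ only when $B$ is orthogonal, not for arbitrary linear maps preserving the last axis. Since $R$ (and $QRQ^{-1}$) is orthogonal, this is all you use.
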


\begin{proposition}
\label{prop:degree} Assume that $\Lambda$ is a compact, connected, orientable $c$-manifold without boundary and $\eta:\Lambda\rightarrow\Lambda$ is a map
with $\deg\eta\neq0$. Then $\eta$ induces an isomorphism on cohomology with coefficients in a field of non-zero characteristic. In the case when $\Lambda$ is non-orientable, the condition
$\deg_{2}\eta\neq0$ implies that $\eta$ induces an isomorphism on cohomology
with coefficients in $\mathbb{Z}_{2}$.
\end{proposition}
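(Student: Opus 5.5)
The statement to prove is Proposition~\ref{prop:degree}: a map $\eta$ with non-zero degree induces an isomorphism on cohomology with coefficients in a suitable field. The plan is to combine Poincar\'e duality with the cup product. Throughout, $\Lambda$ is closed and connected, so its cohomology with field coefficients $F$ is finite dimensional in each degree.

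First I show that $\eta^\ast$ is injective. Fix a fundamental class $[\Lambda]\in H_c(\Lambda;F)$, which exists for any $F$ in the orientable case and for $F=\mathbb{Z}_2$ in general. The degree formula says
\[
\eta_\ast[\Lambda]=(\deg\eta)[\Lambda],
\]
and this coefficient is non-zero in $F$ by hypothesis. Here I read the assumption for a field of non-zero characteristic as saying that the image of $\deg\eta$ in $F$ is non-zero, i.e. that the characteristic does not divide the degree. If that reading is not intended, the statement is false: a degree-$p$ self-map of $S^1$ kills $H^1(S^1;\mathbb{Z}_p)$. For $\mathbb{Q}$ coefficients the non-zero integer degree is automatically a unit.

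Now let $\alpha\in H^k(\Lambda;F)$ with $\alpha\neq 0$. By Poincar\'e duality over a field, the cup product pairing
\[
H^k\times H^{c-k}\to H^c\cong F
\]
is non-degenerate, so there is $\beta\in H^{c-k}(\Lambda;F)$ with
\[
\langle\alpha\smile\beta,[\Lambda]\rangle\neq 0 .
\]
By naturality of the cup product,
\[
\langle\eta^\ast\alpha\smile\eta^\ast\beta,[\Lambda]\rangle=\langle\eta^\ast(\alpha\smile\beta),[\Lambda]\rangle=\langle\alpha\smile\beta,\eta_\ast[\Lambda]\rangle=(\deg\eta)\,\langle\alpha\smile\beta,[\Lambda]\rangle ,
\]
which is non-zero. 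Hence $\eta^\ast\alpha\neq 0$, and $\eta^\ast$ is injective.

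Surjectivity then follows from dimension counting. Since $\Lambda$ is a compact manifold, each $H^k(\Lambda;F)$ is a finite-dimensional $F$-vector space. An injective linear endomorphism of a finite-dimensional space is bijective, so $\eta^\ast$ is an isomorphism in every degree. On manifolds, Alexander--Spanier cohomology coincides with singular cohomology, because manifolds are paracompact, Hausdorff and locally contractible. So using the singular form of Poincar\'e duality is legitimate here.

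The non-orientable case is identical with $F=\mathbb{Z}_2$. Every closed manifold has a $\mathbb{Z}_2$ fundamental class, and Poincar\'e duality holds over $\mathbb{Z}_2$, so the same argument applies with $\deg_2\eta=1$.

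The only delicate point is the coefficient issue already flagged: the field must be such that $\deg\eta$ is non-zero in it. Apart from that, the argument is standard.
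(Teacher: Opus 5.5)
Your proof is correct and is essentially the paper's argument: both turn $\eta_\ast[\Lambda]=d[\Lambda]$ into injectivity of $\eta^\ast$ via Poincar\'e duality and naturality, then use finite-dimensionality to get bijectivity. The paper phrases duality through the cap-product isomorphism $\Psi(u)=u\frown[\Lambda]$, building an explicit left inverse $d^{-1}\Psi^{-1}\eta_\ast\Psi$, whereas you use the nondegenerate cup pairing.

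Your coefficient caveat is also right. The paper's step ``$d$ is invertible by our choice of coefficients'' needs exactly that $\deg\eta$ be non-zero in the field, which holds for instance in characteristic zero, as with $\mathbb{Q}$ in the main theorem. Your example $z\mapsto z^p$ on $S^1$ with $\mathbb{Z}_p$ coefficients shows the literal ``non-zero characteristic'' wording fails.
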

\begin{proof}
We will use the following standard facts from algebraic topology:
\begin{enumerate}
\item There exists a bilinear map%
\[
\frown:H^{p}\left(  \Lambda\right)  \times H_{n}\left(  \Lambda\right)  \rightarrow
H_{n-p}\left(  \Lambda\right)  ,
\]
called the \emph{cap product} such that if $f:\Lambda\rightarrow \Lambda'$ is a continuous
map, then
\begin{equation}
f_{\ast}\left(  f^{\ast}\left(  u\right)  \frown v\right)  =u\frown f_{\ast
}\left(  v\right)  \label{eq:cap_product_1}%
\end{equation}
for all $u\in H^{\ast}\left(  \Lambda\right)  $ and $v\in H_{\ast}\left(  \Lambda'\right)
$ (see Section 3.3. in \cite{MR1867354})
\item There exists a homology class $\left[  \Lambda\right]  \in H_{c}\left(
\Lambda\right)  $, called \emph{the fundamental class of }$\Lambda$, such that
the map $\Psi:H^{i}\left(  \Lambda\right)  \rightarrow H_{c-i}\left(
\Lambda\right)$ given by the formula
\[
\Psi\left(  u\right)  =u\frown\left[  \Lambda\right]
\]
is an isomorphism (this is the \textit{Poincar\'{e} duality theorem;} see, for
example, Theorem 3.30 in \cite{MR1867354}).

\item The cohomology groups of $\Lambda$ with field coefficients are finite
dimensional as vector spaces (see, for example, Proposition 5.3.1 in \cite{BottTu}).
\end{enumerate}

For a proof, let $d=\deg\eta$ or $\deg_{2}\eta$ in the nonorientable case. By the definition of the degree, we have
\begin{equation}
\eta_{\ast}\left(  \left[  \Lambda\right]  \right)  =d\left[
\Lambda\right]  . \label{eq:pf_prop_degree}%
\end{equation}

Let $u\in H^{i}\left(  \Lambda\right)$. Applying \eqref{eq:cap_product_1}, \eqref{eq:pf_prop_degree} and the bilinearity of the cap product successively, we obtain
\begin{align*}
\eta_{\ast}\left(  \Psi\left(  \eta^{\ast}\left(  u\right)  \right)  \right)
&  =\eta_{\ast}\left(  \eta^{\ast}\left(  u\right)  \frown\left[
\Lambda\right]  \right) \\
&  =u\frown\eta_{\ast}\left(  \left[  \Lambda\right]  \right) \\
&  =u\frown d\left[  \Lambda\right] \\
&  =d\left(  u\frown\left[  \Lambda\right]  \right) \\
&  =d\Psi\left(  u\right)  .
\end{align*}
Since $d$ is invertible by our choice of coefficients, we can rewrite
\[
\left(  d^{-1}\cdot\Phi^{-1}\circ\eta_{\ast}\circ\Phi\right)  \circ\eta^{\ast
}=\operatorname{id},
\]
which implies that $\left(  d^{-1}\cdot\Phi^{-1}\circ\eta_{\ast}\circ
\Phi\right)  $ is a left inverse of $\eta^{\ast}$. It follows that $\eta
^{\ast}$ is an injective linear map and in view of the third auxiliary fact
we conclude that it is an isomorphism as well.
\end{proof}

\subsection{Conley index for maps}

In this section we briefly recall the notions related to the Conley index for
maps developed in \cite{Mrozek1990}.  The
Conley index is defined there for homeomorphisms on locally compact metric spaces.
Nevertheless, it is indicated there that all the results are true for not
necessarily invertible maps.

\subsubsection{Leray reduction functor}

The Conley index is defined using the so-called \emph{Leray reduction
functor}. We outline this construction now. Fix a commutative ring with
identity $R$. Recall that a collection $E=(E_{n})_{n\in\mathbb{N}}$ is called
a graded $R$-module if each $E_{n}$ is an $R$-module. Likewise, we use the
notation $e:E\rightarrow E$ for a collection $(e_{n})_{n\in\mathbb{N}}$, where
$e_{n}:E_{n}\rightarrow E_{n}$ are endomorphisms and call $e$ an endomorphism
of $E$.

By $\mathcal{E}$ we denote a category whose objects are pairs $(E,e)$, where
$E$ is a graded $R$-module and $e:E\rightarrow E$ is an endomorphism, and a
collection $\varphi=(\varphi_{n})_{n\in\mathbb{N}}$ is a morphism between
$(E,e)$ and $(F,f)$ if each $\varphi_{n}:E_{n}\rightarrow F_{n}$ is a
homomorphism of modules such that the diagram
\begin{equation}%
\begin{array}
[c]{ccc}%
E_{n} & \overset{e_{n}}{\longrightarrow} & E_{n}\\
\varphi_{n}\downarrow\quad &  & \quad\downarrow\varphi_{n}\\
F_{n} & \underset{f_{n}}{\longrightarrow} & F_{n}%
\end{array}
\label{eq:cat-endo-mor}%
\end{equation}
commutes.

By $\mathcal{M}$ we denote a full subcategory of $\mathcal{E}$ whose objects
are pairs $(E,e)$ such that $e=(e_{n})_{n\in\mathbb{N}}$ is a monomorphism, i.
e. each $e_{n}$ is a monomorphism. Similarly, by $\mathcal{I}$ we mean a full
subcategory of $\mathcal{M}$ whose objects are pairs $(E,e)$ such that
$e=(e_{n})_{n\in\mathbb{N}}$ is an isomorphism, i. e. each $e_{n}$ is an isomorphism.

The Leray reduction functor will map the category $\mathcal{E}$ into its
subcategory $\mathcal{I}$. The definition goes as follows. The
\emph{generalized kernel} of $e$ is defined as
\[
\operatorname{gker}(e):=\bigcup\{e^{-n}(0)\mid n\in\mathbb{N}\}
\]
Since $e(\operatorname{gker}(e))\subset\operatorname{gker}(e)$, there is a
well-defined induced endomorphism
\[
e^{\prime}:E/\operatorname{gker}(e)\rightarrow E/\operatorname{gker}(e),
\]
given by
\begin{equation}
e^{\prime}\left(  [x]\right)  =[e(x)]. \label{eq:leray_induced_map_1}%
\end{equation}
Observe that $(E/\operatorname{gker}(e),e^{\prime})\in\mathcal{M}$. If
$\varphi\in\mathcal{E}\left(  (E,e),(F,f)\right)  $, then thanks to the
commutativity of the diagram (\ref{eq:cat-endo-mor}) we have $\varphi
(\operatorname{gker}(e))\subset\operatorname{gker}(f)$. It follows that
\[
\varphi^{\prime}:(E/\operatorname{gker}(e),e^{\prime})\rightarrow
(F/\operatorname{gker}(f),f^{\prime})
\]
given for each $n$ by
\[
\varphi_{n}^{\prime}([x])=[\varphi_{n}(x)]
\]
is a well-defined morphism in $\mathcal{M}\left(  (E/\operatorname{gker}%
(e),e^{\prime}),(F/\operatorname{gker}(f),f^{\prime})\right)  .$

The assignment
\[
\mathcal{LM}(E,e):=(E/\operatorname{gker}(e),e^{\prime})
\]
and
\[
\mathcal{LM}(\varphi):=\varphi^{\prime}%
\]
defines a covariant functor $\mathcal{LM}:\mathcal{E}\rightarrow\mathcal{M}$.

\begin{notation}
Throughout the paper, we will continue to use the notation $e^{\prime}$ for
the induced map (\ref{eq:leray_induced_map_1}) of $e$.
\end{notation}

Now let $(G,g)\in\mathcal{M}$. The \emph{generalized image} of $g$ is defined
as
\[
\operatorname{gim}(g):=\bigcap\{g^{n}(G)\mid n\in\mathbb{N}\}.
\]
Moreover, since $g(\operatorname{gim}(g))\subset\operatorname{gim}(g)$, we can
define the restriction
\[
g^{\prime\prime}:\operatorname{gim}{g}\rightarrow\operatorname{gim}{g}%
\]
by
\[
g^{\prime\prime}:=g|_{\operatorname{gim(g)}}.
\]
One can check that $(\operatorname{gim}(g),g^{\prime\prime})\in\mathcal{I}$
(see \cite{Mrozek1990} for details). Similarly, for a morphism $\xi
\in\mathcal{M}((G,g),(H,h))$, we have $\xi(\operatorname{gim}(g))\subset
\operatorname{gim}(h)$ because the diagram (\ref{eq:cat-endo-mor}) is
commutative. Therefore, we can define a morphism
\[
\xi^{\prime\prime}:(\operatorname{gim}(g),g^{\prime\prime})\rightarrow
(\operatorname{gim}(h),h^{\prime\prime})
\]
by the formula
\[
\xi_{n}^{\prime\prime}(x):=\xi_{n}(x).
\]
As before, the assignment
\[
\mathcal{LI}(G,g):=(\operatorname{gim}(g),g^{\prime\prime})
\]
and
\[
\mathcal{LI}(\xi):=\xi^{\prime\prime}%
\]
defines a covariant functor
\[
\mathcal{LI}:\mathcal{M}\rightarrow\mathcal{I}.
\]

\begin{definition}[see \cite{Mrozek1990}]
The composition $\mathcal{L}=\mathcal{LI}\circ\mathcal{LM}:\mathcal{E}%
\rightarrow$ $\mathcal{I}$ is called the Leray reduction functor.
\end{definition}

\begin{remark}
\label{rem:leray_funct}The key property of the Leray reduction functor is that
it acts as an identity functor on $\mathcal{I}$.
\end{remark}

We end this section with one more useful remark that will be useful in Section \ref{sec:ext_prod}.
\begin{remark}
    \label{rem:leray_funct_2} One can also consider functors $\mathcal{LM}$ and $\mathcal{LI}$ defined on the category of rings with a distinguished endomorphism. The construction is similar to the one we have just presented.  
\end{remark}

\subsubsection{Definition of the Conley index}
\label{subsubsec:con_ind_def}
Let $f:X\rightarrow X$ be a continuous map on a locally compact metric space
$X$. Assume that $J\subset\mathbb{Z}$ is an interval of integers containing
$0$. The sequence $\sigma:J\rightarrow X$ is called a \emph{solution} for $f$ through $x\in X$ if for all $k\in J$ we have $f(\sigma\left(  k\right)
)=\sigma\left(  k+1\right)  $ (if $k+1\in J)$ and $\sigma(0)=x$.

For a set $A\subset X$ we define the sets%
\begin{align*}
\operatorname{Inv}^{+}(A, f)  &  =\left\{  x\in A\mid f^{k}\left(  x\right)  \in
A\text{ for all }k\in\mathbb{N}\right\}  ,\\
\operatorname{Inv}^{-}(A, f)  &  =\left\{  x\in A\mid\text{there is a solution
}\sigma:\left\{  \ldots,-3,-2,-1,0\right\}  \rightarrow A\text{ through
}x\right\}
\end{align*}
\[
\operatorname{Inv}(A, f):=\operatorname{Inv}^{+}(A, f)\cap\operatorname{Inv}^-(A,f)
\]
called \emph{a forward invariant, a backward invariant and} \emph{an invariant
part} of $A$. The set $A\subset X$ is called \emph{invariant} if
$\operatorname{Inv}(A, f)=A.$ For a pair of subsets $A,B\subset X$ we say that $A$
is \emph{positively invariant} with respect to $B$ if $f(A)\cap B\subset A.$

Let $K$ be an invariant set. If there exists a compact set $N$ such that
$K=\operatorname{Inv}(N, f)$ and $K \subset \operatorname{int}N$, then we say that $K$ is an \emph{isolated invariant
set} and $N $ is called an \emph{isolating neighborhood} for $K$.

\begin{definition}
\label{def:ind_pair} An index pair $P=(P_{1},P_{2})$ for an isolated
invariant set $K$ with an isolating neighborhood $N$ is a pair of compact
subsets $P_{1},P_{2}$ of $N$ such that

\begin{enumerate}
\item $P_{1}$ and $P_{2}$ are positively invariant with respect to $N$,

\item $\operatorname{Inv}^{-}(N, f)\subset\operatorname{int}_{N}(P_{1})$ and
$\operatorname{Inv}^{+}(N,f)\subset N\setminus P_{2}.$

\item $P_{1}\setminus P_{2}  \subset
\operatorname{int}N\cap f^{-1}(\operatorname{int}N).$
\end{enumerate}
\end{definition}

The set of all index pairs with respect to $N$ will be denoted by
$\operatorname{IP}(N,f)$.

For $P=(P_{1},P_{2})\in\operatorname{IP}(N,f)$, we let
\begin{align*}
T(P)  &  :=(T(P_{1}),T(P_{2})):=(P_{1}\cup(X\setminus\operatorname{int}%
N),P_{2}\cup(X\setminus\operatorname{int}N)),\\
S(P)  &  :=(S(P_{1}),S(P_{2})):=(P_{1}\cup f(P_{2}),P_{2}\cup f(P_{2})).
\end{align*}

\begin{theorem}[Theorem 2.3 in \cite{Mrozek1990}]
We have the inclusions
\begin{equation}
f(P)\subset S(P)\subset T(P). \label{eq:T-f-inclusion}%
\end{equation}
Moreover, the induced maps $i_{P,S(P)}^{\ast}$, $i_{P,T(P)}^{\ast}$ are isomorphisms.
\end{theorem}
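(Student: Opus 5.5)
The plan is to get the inclusions \eqref{eq:T-f-inclusion} directly from the three conditions of Definition \ref{def:ind_pair}, and then to deduce both isomorphisms from the strong excision property (Theorem \ref{thm:str_excision}). For the latter, it suffices to check that each pair is closed and that the differences $P_1\setminus P_2$, $S(P_1)\setminus S(P_2)$ and $T(P_1)\setminus T(P_2)$ all coincide.

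\textbf{Inclusions.} First I show $f(P)\subset S(P)$. The inclusion $f(P_2)\subset S(P_2)$ is trivial. For $f(P_1)\subset S(P_1)$, take $x\in P_1$.
\begin{itemize}
\item If $x\in P_2$, then $f(x)\in f(P_2)\subset S(P_1)$.
\item If $x\in P_1\setminus P_2$, condition 3 gives $f(x)\in\operatorname{int}N\subset N$. Positive invariance of $P_1$ with respect to $N$, i.e. $f(P_1)\cap N\subset P_1$, then yields $f(x)\in P_1$.
\end{itemize}
Next I show $S(P)\subset T(P)$. Since $P_i\subset T(P_i)$, it remains to show $f(P_2)\subset P_2\cup(X\setminus\operatorname{int}N)$. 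If $y=f(x)$ with $x\in P_2$ and $y\in\operatorname{int}N\subset N$, then positive invariance of $P_2$ gives $y\in P_2$.

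\textbf{Isomorphisms.} All the sets involved are closed. The sets $P_1,P_2,f(P_2)$ are compact, being images of compact sets under the continuous map $f$, and $X\setminus\operatorname{int}N$ is closed. The ambient spaces are closed subsets of the locally compact metric space $X$, hence paracompact Hausdorff, so Theorem \ref{thm:str_excision} applies once the differences agree.

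For the pair $T(P)$, condition 3 gives $P_1\setminus P_2\subset\operatorname{int}N$, so
\[
T(P_1)\setminus T(P_2)=(P_1\setminus P_2)\setminus(X\setminus\operatorname{int}N)=P_1\setminus P_2 .
\]

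For the pair $S(P)$, we have $S(P_1)\setminus S(P_2)=P_1\setminus(P_2\cup f(P_2))$. It therefore suffices to show that no point of $P_1\setminus P_2$ lies in $f(P_2)$. Suppose $y=f(x)$ with $x\in P_2$ and $y\in P_1\setminus P_2$. Then $y\in\operatorname{int}N\subset N$, so positive invariance of $P_2$ forces $y\in P_2$, a contradiction.

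Strong excision now gives that $i^\ast_{P,S(P)}$ and $i^\ast_{P,T(P)}$ are isomorphisms.

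The only real subtlety, and the step I expect to be the main obstacle, is the equality of differences for $S(P)$. It is the one place where condition 3 and positive invariance of $P_2$ must be combined, and it relies on the tacit closedness and paracompactness hypotheses needed to invoke strong excision.
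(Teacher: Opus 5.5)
Your proof is correct. The inclusions follow from positive invariance together with condition 3, and both isomorphisms follow from strong excision once you check that all sets are closed (so the inclusions are closed maps, which Spanier's version of strong excision requires) and that the differences coincide. The paper gives no proof of its own and simply cites Mrozek; your direct verification followed by strong excision is the standard argument for this result. One small simplification: in the $S(P)$ step you do not need condition 3, since $P_1\subset N$ already gives $f(P_2)\cap P_1\subset f(P_2)\cap N\subset P_2$.
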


We are finally ready to recall the definition of the Conley index.

\begin{definition}
\label{def:index_map} An index map is a homomorphism $I_{P}:H^{\ast
}(P_{1},P_{2})\rightarrow H^{\ast}(P_{1},P_{2})$ defined by
\begin{equation}
I_{P}:=f_{P,T(P)}^{\ast}\circ i_{P,T(P)}^{\ast-1}. \label{eq:I_P-def}%
\end{equation}
The Conley index of the invariant set $K$ is defined as
\[
C(K,f):=\mathcal{L}(H^{\ast}(P_{1},P_{2}),I_{P}).
\]

\end{definition}

In \cite{Mrozek1990} it is proven that the index does not depend on the choice
of an isolating neighborhood or an index pair. We also have the following important remark.
\begin{remark}
\label{rem:con_ind_isom} If the index map $I_{P}$ is an isomorphism, then the Conley index
is isomorphic to $(H^{\ast}(P_{1},P_{2}),I_{P})$ (see Remark
\ref{rem:leray_funct}).
\end{remark}

\subsubsection{Auxiliary facts from the Conley index theory}
\label{sec:conley_index}
We collect here all supporting lemmas and
theorems regarding the Conley index that are used in our paper.

\begin{lemma}
[Theorem 4.4 in \cite{Mrozek1990}]\label{thm:index_pairs_existence} Let $N$ be an
isolating neighborhood for $K$. Then for every open set $W\supset K$ there
exists $P\in\mathrm{IP}(N)$ such that
\[
P_{1}\setminus P_{2}\subset W.
\]

\end{lemma}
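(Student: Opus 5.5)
This lemma is quoted from \cite{Mrozek1990}, so the natural plan is to reproduce the standard construction of index pairs inside a prescribed neighbourhood, adapted to a (not necessarily invertible) continuous map $f$ on a locally compact metric space $X$.

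\textbf{Step 1: shrink to a neighbourhood inside $W$.} Using compactness of $N$ and the fact that $K=\operatorname{Inv}(N,f)\subset\operatorname{int}N$, first choose a compact neighbourhood $N'\subset N\cap W$ of $K$. I would then check that $N'$ is itself an isolating neighbourhood of $K$: this is immediate, since $\operatorname{Inv}(N',f)\subset\operatorname{Inv}(N,f)=K\subset\operatorname{int}N'$. Working in $N'$ will force $P_1\setminus P_2\subset N'\subset W$ automatically.

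\textbf{Step 2: a continuity-of-invariant-parts lemma.} Next I would prove a standard compactness fact. For every neighbourhood $U$ of $\operatorname{Inv}^-(N,f)$ there is $n$ such that every point $x\in N$ admitting a backward solution of length $n$ in $N$ lies in $U$. The analogous statement holds for $\operatorname{Inv}^+$ and forward orbits of length $n$. Both follow by a diagonal argument: take a sequence of counterexamples with longer and longer orbit segments, extract convergent subsequences using compactness of $N$ and continuity of $f$, and obtain a full backward (resp. forward) solution, which yields a contradiction.

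\textbf{Step 3: construct $P_1$ and $P_2$.} Define
\[
P_1=\{x\in N : \text{there is a solution } \sigma:\{-n,\dots,0\}\to N \text{ through } x\},
\]
and let $P_2$ consist of those $x\in P_1$ such that $x\notin \operatorname{int}N'$, or $f(x)\notin\operatorname{int}N'$, or $x$ lies outside a small open neighbourhood $V$ of $\operatorname{Inv}^+(N,f)$. The modification must be arranged so that $P_2$ is compact and positively invariant with respect to $N$. A cleaner variant is the Robbin--Salamon/Mrozek choice: $P_2=P_1\setminus\{x : f^j(x)\in \operatorname{int}N' \text{ for } j\le m\}$ for suitable $m$, which keeps $P_2$ closed. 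Positive invariance of $P_1$ holds because extending a backward orbit by one step stays in $N$. Positive invariance of $P_2$ with respect to $N$ requires choosing the exit condition to be inherited under $f$, for example by using the forward-time window $j\le m$.

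\textbf{Step 4: verify the index pair axioms.} Compactness follows from continuity of $f$ and closedness of the defining conditions. By Step 2, $\operatorname{Inv}^-(N,f)\subset\operatorname{int}_N P_1$ once $n$ is large, since points near $\operatorname{Inv}^-$ do have long backward orbits. Likewise $\operatorname{Inv}^+(N,f)\cap P_2=\emptyset$ for large $m$. Finally, $P_1\setminus P_2\subset\operatorname{int}N'\cap f^{-1}(\operatorname{int}N')\subset \operatorname{int}N\cap f^{-1}(\operatorname{int}N)$, and this set is contained in $W$.

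\textbf{Main obstacle.} The main obstacle is Step 3: choosing $P_2$ so that it is simultaneously closed and positively invariant with respect to $N$ while excluding $\operatorname{Inv}^+(N,f)$. I would handle this by first picking $m$ from Step 2 so that points with forward orbit of length $m$ in $N'$ lie near $K$. After that, I would define $P_2$ via exit within $m$ steps from the closed set $N\setminus\operatorname{int}N'$, and adjust $n$ accordingly.
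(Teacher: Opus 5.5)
The paper gives no proof of its own here (it cites Mrozek). Judged on its own, your construction has a genuine gap in the choice of $P_1$. Condition 2 of Definition~\ref{def:ind_pair} demands $\operatorname{Inv}^-(N,f)\subset\operatorname{int}_N P_1$. Your justification (``points near $\operatorname{Inv}^-$ do have long backward orbits'') is the converse of what Step 2 gives, and it is false in general. Take $f(x,y)=(2x,0)$ on $\mathbb{R}^2$ and $N=[-1,1]^2$. Then $K=\{(0,0)\}\subset\operatorname{int}N$ and $\operatorname{Inv}^-(N,f)=[-1,1]\times\{0\}$. But for every $n\ge 1$ your $P_1$ equals $[-1,1]\times\{0\}$, whose interior relative to $N$ is empty. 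This is exactly the kind of map the paper applies the lemma to, since $h_1$ annihilates the $y$-component. Invertibility does not rescue it either. For $f(x,y)=(2x,y/2)$ and $N=[-1,1]^2\setminus\bigl((1/8,3/8)\times(-\infty,0)\bigr)$, the point $(1/2,0)$ lies in $\operatorname{Inv}^-(N,f)$. Yet $(1/2,-\varepsilon)\in N$ has its only preimage $(1/4,-2\varepsilon)$ outside $N$, so $(1/2,0)\notin\operatorname{int}_N P_1$ for any $n\ge 1$.

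The missing idea is to thicken $P_1$ by forward $N$-trajectories. Choose a compact $N$-neighbourhood $A'$ of $\operatorname{Inv}^-(N,f)$ and set $P_1=\{f^j(y): y\in A',\ j\ge 0,\ f^i(y)\in N \text{ for } 0\le i\le j\}$. Positive invariance with respect to $N$ is then automatic, and $\operatorname{Inv}^-(N,f)\subset\operatorname{int}_N A'\subset\operatorname{int}_N P_1$.
\begin{itemize}
\item Your Step 2 controls large $j$: such points lie near $\operatorname{Inv}^-(N,f)$, and limits with $j\to\infty$ land in $\operatorname{Inv}^-(N,f)\subset A'$.
\item Continuity, together with the forward invariance of $\operatorname{Inv}^-(N,f)$ inside $N$, controls bounded $j$.
\end{itemize}
Hence $P_1$ is compact and lies in any prescribed neighbourhood $A$ of $\operatorname{Inv}^-(N,f)$ once $A'$ is small.

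You can then keep your forward-window $P_2$, but not for the reason you give. The exit condition passes to $f(x)$ only when the exit time is $j\ge 1$. For $j=0$ you need every point of $P_1\setminus\operatorname{int}N'$ to leave $N$ within $m+1$ steps. Equivalently, $P_1\cap N^+_{m+1}\subset\operatorname{int}N'$, where $N^+_k=\{x\in N: f^i(x)\in N,\ 0\le i\le k\}$. Then $f(x)\in N$ forces $f(x)$ to leave $N$, hence $\operatorname{int}N'$, within $m$ steps. This inclusion holds by compactness for $A$ small and $m$ large, because $\operatorname{Inv}^-(N,f)\cap\operatorname{Inv}^+(N,f)=K\subset\operatorname{int}N'$. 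The same inclusion, applied to the iterates $f^j(x)\in P_1$, also gives $\operatorname{Inv}^+(N,f)\cap P_2=\emptyset$. Condition 3 together with $P_1\setminus P_2\subset W$ then follows as you say, taking $m\ge 1$.
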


\begin{definition}
[Definition 5.10 in \cite{Mrozek1990}]\label{def:related} Assume $P,Q\in \operatorname{IP}(N,f)$.
We say that $P$ is related to $Q$ if $P\subset Q$ and%
\[
\operatorname*{cl}(Q_{1}\backslash P_{2})\subset\operatorname*{int}N\cap
f^{-1}\left(  \operatorname{int}N\right)  .
\]

\end{definition}

\begin{lemma}
[Lemma 5.12 in \cite{Mrozek1990}\label{lem:cont_prop_1}]For every isolating
neighborhood $N$ there exist index pairs $P,R\in\operatorname{IP}(N,f)$ such that
$P\subset\operatorname{int}_{N}R$ and $P$ is related to $R$.
\end{lemma}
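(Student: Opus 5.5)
The plan is to construct both index pairs explicitly by the standard ``forward-saturation'' procedure that underlies Lemma~\ref{thm:index_pairs_existence}, run twice with nested choices of parameters. $R$ is built from slightly larger data than $P$, so that the inclusions $P\subset\operatorname{int}_N R$ and the relatedness condition of Definition~\ref{def:related} come out of the construction.

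\textbf{Step 1: fix the parameters.} Put $K=\operatorname{Inv}(N,f)$ and $G:=\operatorname{int}N\cap f^{-1}(\operatorname{int}N)$, which is an open set containing $K$. By compactness of $N$ and continuity of $f$, there is a compact neighbourhood $W$ of $K$ with $W\subset G$. Again by compactness, there are compact neighbourhoods (relative to $N$) $V\subset\operatorname{int}_N V'$ of $\operatorname{Inv}^-(N,f)$ and $U\subset\operatorname{int}_N U'$ of $\operatorname{Inv}^+(N,f)$ with $V'\cap U'\subset W$. Here one uses $\operatorname{Inv}^-(N,f)\cap\operatorname{Inv}^+(N,f)=K\subset\operatorname{int}W$ and the fact that $\operatorname{Inv}^\pm(N,f)$ are compact.

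\textbf{Step 2: define the pairs.} For a compact $A\subset N$ let $\Gamma(A)$ be the set of points $f^n(y)$, $n\ge0$, with $y\in A$ and $f^k(y)\in N$ for $0\le k\le n$. Set
\[
R_1=\Gamma(V'),\quad R_2=R_1\setminus\operatorname{int}_N U,\qquad P_1=\Gamma(V),\quad P_2=P_1\setminus\operatorname{int}_N U'.
\]

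\textbf{Step 3: the uniform-time argument.} A compactness argument gives a bound $n_0$ such that every partial orbit in $N$ of length $n_0$ starting in $V'$ either ends in $\operatorname{int}_N U$ or has already passed through $N\setminus V'$. The reason is that a point whose orbit stays in $N$ for arbitrarily long times lies, in the limit, in $\operatorname{Inv}^+(N,f)$. Similarly, points of $\Gamma(V')$ obtained after many steps lie close to $\operatorname{Inv}^-(N,f)$. From this one gets:
\begin{itemize}
\item $\Gamma(V)$ and $\Gamma(V')$ are compact, after passing to closures if necessary and re-checking the axioms;
\item they are positively invariant with respect to $N$, directly from the definition of $\Gamma$;
\item they contain $\operatorname{Inv}^-(N,f)$ in their $N$-interiors;
\item the second sets avoid $\operatorname{Inv}^+(N,f)$;
\item $R_1\setminus R_2\subset V'\cap U'$, up to the saturation, which is contained in $W\subset G$; the same holds for $P$.
\end{itemize}
The risk here is that saturation may destroy the last inclusion. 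To control it I would shrink $V'$ and $U'$ using the uniform time $n_0$ and continuity of $f^k$ for $k\le n_0$, so that saturated points still lie in $W$. With the bullets above, $P$ and $R$ satisfy Definition~\ref{def:ind_pair}.

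\textbf{Step 4: the required relations.} Since $V\subset\operatorname{int}_N V'$, continuity of the finitely many relevant iterates gives $P_1\subset\operatorname{int}_N R_1$. Since $N\setminus\operatorname{int}_N U'\subset\operatorname{int}_N(N\setminus U)$, we also get $P_2\subset\operatorname{int}_N R_2$. For relatedness we have
\[
\operatorname{cl}(R_1\setminus P_2)\subset R_1\cap U'.
\]
Points of $R_1$ near $U'$ lie near the intersection of $\operatorname{Inv}^-$-like sets with $U'$, so the parameter choice in Step 1 forces $R_1\cap U'\subset W\subset G$.

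The main obstacle is Step 3 together with its use in Step 4: the quantitative compactness argument that makes all saturated sets compact and keeps $R_1\cap U'$ inside $G$. I would handle it first by proving a single lemma. It should state that for every neighbourhood $O$ of $K$ there are neighbourhoods $V'$ of $\operatorname{Inv}^-(N,f)$ and $U'$ of $\operatorname{Inv}^+(N,f)$ with $\Gamma(V')\cap U'\subset O$. Everything else then reduces to bookkeeping of nested neighbourhoods.
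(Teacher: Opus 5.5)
Your scheme has three gaps, and the last one cannot be closed. First, the exit sets $P_2=P_1\setminus\operatorname{int}_N U'$ and $R_2=R_1\setminus\operatorname{int}_N U$ need not be positively invariant with respect to $N$. Your bullets check positive invariance only for the $\Gamma$-sets, and nothing stops a point of $R_1$ just outside $U$ from being mapped into $\operatorname{int}_N U$. For example, for the saddle $f(x,y)=(2x,y/2)$, take $U$ thin far from the fixed point along the stable axis and thick near it. The usual remedy is to saturate the exit set as well, e.g.\ $R_2:=\Gamma(R_1\setminus\operatorname{int}_N U)$. This set is compact because $N$-orbit segments starting in a compact set disjoint from $\operatorname{Inv}^+(N,f)$ have uniformly bounded length. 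Second, $P_1\subset\operatorname{int}_N R_1$ does not follow from continuity of finitely many iterates. Continuity controls preimages, not images: points near $f^n(y)$ need not be of the form $f^n(y')$ with $y'$ near $y$, since $f$ is not assumed open or invertible. Also, nearby orbits may leave $N$ if the segment $f^{[0,n]}(y)$ touches $\partial N$. You would have to choose $V'$ as a neighbourhood of the compact set $\Gamma(V)$ itself. Your auxiliary lemma $\Gamma(V')\cap U'\subset O$ is true by the compactness argument you indicate, and $\Gamma(V)$ is already closed, so those parts are fine.

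The decisive failure is the relatedness step in Step 4. Since $R_1\setminus P_2\supset R_1\setminus P_1\supset V'\setminus\Gamma(V)$, the inclusion $\operatorname{cl}(R_1\setminus P_2)\subset R_1\cap U'$ is false. The set $V'\setminus\Gamma(V)$ contains points close to $\operatorname{Inv}^-(N,f)$ but far from $\operatorname{Inv}^+(N,f)$, hence outside $U'$. Such points lie on $\partial N$ as soon as $\operatorname{Inv}^-(N,f)$ meets $\partial N$. Relatedness forces $R_1\setminus G\subset P_2$, where $G=\operatorname{int}N\cap f^{-1}(\operatorname{int}N)$, so $R$ cannot be produced by enlarging all the data. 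In fact, with the definitions as reproduced in this paper, no construction can work. Suppose $C:=\operatorname{cl}(R_1\setminus P_2)\subset G$ and $P_1\subset\operatorname{int}_N R_1$. Then each $p\in P_1\setminus C$ has an $N$-neighbourhood contained in $R_1\setminus C\subset P_2\subset P_1$. Hence $P_1\setminus C\subset P_2$, and $P_1\setminus C$ is open and closed in $N\setminus C\supset\partial N$. Now take the saddle above on $N=[-1,1]^2$. The connected set $\partial N$ contains $(1,0)\in\operatorname{Inv}^-(N,f)\subset P_1$, hence $\partial N\subset P_2$. But $(0,1)\in\partial N\cap\operatorname{Inv}^+(N,f)$, contradicting $\operatorname{Inv}^+(N,f)\subset N\setminus P_2$. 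So the statement, read literally with Definitions~\ref{def:ind_pair} and~\ref{def:related} as given here, already fails for a hyperbolic saddle. Before attempting a proof you need the precise notion of relatedness (or of $P\subset\operatorname{int}_N R$) used in \cite{Mrozek1990}; it cannot be the one transcribed here if the lemma is to hold. The construction of $R$ then has to be adapted to that notion, not built from nested neighbourhoods of $\operatorname{Inv}^{\pm}(N,f)$.
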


\begin{lemma}
[Lemma 5.15 in \cite{Mrozek1990}\label{lem:cont_prop_2}]Assume that
$P,R\in\operatorname{IP}(N,f)$, $P$ is related to $R$ and $P\subset
\operatorname{int}_{N}R$. Then there exists $Q\in\operatorname{IP}\left(
N,f\right)  $ such that $P\subset\operatorname{int}_{N}Q$, $Q\subset
\operatorname{int}_{N}R$, $P$ is related to $Q$ and $Q$ is related to $R$.
\end{lemma}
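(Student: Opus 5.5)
The plan is to reduce everything to constructing positively invariant compact sets squeezed between $P$ and $R$. The conditions involving relatedness and the index pair axioms then follow almost automatically from the corresponding properties of $P$ and $R$.

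\textbf{Step 1: what suffices.} Suppose we have compact sets $Q_2\subset Q_1\subset N$, both positively invariant with respect to $N$, such that
\[
P_i\subset\operatorname{int}_N Q_i,\qquad Q_i\subset\operatorname{int}_N R_i,\qquad i=1,2.
\]
Then $Q=(Q_1,Q_2)$ has all the required properties.
\begin{itemize}
\item \emph{Index pair condition 2.} We have $\operatorname{Inv}^-(N,f)\subset\operatorname{int}_N P_1\subset\operatorname{int}_N Q_1$. Also $Q_2\subset R_2$ and $R_2\cap\operatorname{Inv}^+(N,f)=\emptyset$.
\item \emph{Index pair condition 3.} Since $P_2\subset Q_2$ and $Q_1\subset R_1$, we get $Q_1\setminus Q_2\subset R_1\setminus P_2\subset\operatorname*{cl}(R_1\setminus P_2)$. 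This set lies in $\operatorname{int}N\cap f^{-1}(\operatorname{int}N)$ because $P$ is related to $R$.
\item \emph{$P$ related to $Q$.} We have $\operatorname*{cl}(Q_1\setminus P_2)\subset\operatorname*{cl}(R_1\setminus P_2)$.
\item \emph{$Q$ related to $R$.} We have $\operatorname*{cl}(R_1\setminus Q_2)\subset\operatorname*{cl}(R_1\setminus P_2)$.
\end{itemize}
In every case the right-hand side lies in $\operatorname{int}N\cap f^{-1}(\operatorname{int}N)$. So the whole lemma reduces to constructing $Q_1,Q_2$.

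\textbf{Step 2: construction.} For $\delta>0$ let $V_i^\delta=\{x\in N:\operatorname{dist}(x,P_i)\le\delta\}$. Let $Q_i^\delta$ be the positively invariant hull of $V_i^\delta$ relative to $N$. That is, $Q_i^\delta$ is the closure of the set of points $f^n(y)$ with $y\in V_i^\delta$ and $f^k(y)\in N$ for $0\le k\le n$. By construction $Q_2^\delta\subset Q_1^\delta$ and $P_i\subset\operatorname{int}_N Q_i^\delta$. Each $Q_i^\delta$ is compact, and one checks positive invariance of the closure using continuity of $f$. The goal is to show that $Q_i^\delta\subset\operatorname{int}_N R_i$ for $\delta$ small, using compactness together with the facts $P_i\subset\operatorname{int}_N R_i$ and $f(R_i)\cap N\subset R_i$.

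\textbf{Step 3: the main obstacle.} I expect the hard part to be controlling long orbit segments. The hull involves arbitrarily many iterates, and the cut-off "$\cap N$" is discontinuous, so a naive continuity argument fails. I would argue by contradiction. Take $\delta_m\to0$ and points $y_m\in V_i^{\delta_m}$ whose $N$-orbit segments leave $\operatorname{int}_N R_i$ at step $n_m$, and consider the first step at which this happens.
\begin{itemize}
\item \emph{Bounded $n_m$.} The points $y_m$ converge to some point $y\in P_i$. By continuity $f^k(y)\in N$ and $f^k(y)\in R_i$ for $k\le n$. Hence the exit point lies in $R_i\setminus\operatorname{int}_N R_i$. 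Then I need a one-step statement: if $x$ is close to $R_i$ and $f(x)\in N$, then $f(x)$ is close to $R_i$. For points near $R_1\setminus P_2$ this holds because $f$ maps a neighbourhood of $\operatorname*{cl}(R_1\setminus P_2)$ into $\operatorname{int}N$. Near $P_2$ it follows from positive invariance of $P_2\subset\operatorname{int}_N R_2$.
\item \emph{Unbounded $n_m$.} A limit argument produces a point of $\operatorname{Inv}^+(N,f)$ in $P_i$, or a backward-extendable sequence. For $i=2$ this contradicts $\operatorname{Inv}^+(N,f)\cap P_2=\emptyset$. For $i=1$, one uses that orbits of length $n_m\to\infty$ accumulate on $\operatorname{Inv}^-(N,f)\subset\operatorname{int}_N P_1$.
\end{itemize}
If the one-step estimate proves too delicate, I would first replace $R$ by the stronger relatedness neighbourhood furnished by Lemma \ref{lem:cont_prop_1} and run the same argument there.
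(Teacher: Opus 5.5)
The paper gives no proof of this lemma; it is quoted from \cite{Mrozek1990}. So this is an assessment of your argument on its own terms. Your Step~1 reduction is correct, and the positive-hull construction is a sound route, but two steps do not hold as written.

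\textbf{Positive invariance of the closure.} You claim that positive invariance of $Q_i^\delta=\operatorname{cl}(\text{hull})$ follows from continuity of $f$. It does not. For $z_m\to z$ in the hull, one can have $f(z_m)\notin N$ while $f(z)\in\partial N$. Concretely, take $X=\mathbb{R}$, $N=[0,2]$, $f(x)=3-x$. The set $A=[0,1)$ is positively invariant with respect to $N$, because $f(A)\cap N=\emptyset$. Yet $f(1)=2\in N\setminus\operatorname{cl}A$.

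The simplest repair is to drop the closure. Put $N_n=\{x\in N: f^k(x)\in N \text{ for } 0\le k\le n\}$, which is compact, and $H_i=\bigcup_{n\ge0}f^n(V_i^\delta\cap N_n)$. This set is positively invariant by construction, and it is already closed:
\begin{itemize}
\item a limit of $f^{n_m}(y_m)$ with bounded $n_m$ lies in one of the compact sets $f^n(V_i^\delta\cap N_n)$;
\item if $n_m\to\infty$, a diagonal argument puts the limit in $\operatorname{Inv}^-(N,f)\subset P_1\subset H_1$;
\item for $i=2$ the case $n_m\to\infty$ cannot occur once $V_2^\delta\cap\operatorname{Inv}^+(N,f)=\emptyset$. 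This holds for small $\delta$, because $\operatorname{Inv}^+(N,f)=\bigcap_n N_n$ is compact and disjoint from $P_2$.
\end{itemize}
Alternatively, keep the closure and use relatedness. If $z\in\operatorname{cl}H_i\setminus P_2$, then $z\in\operatorname{cl}(R_1\setminus P_2)$, so $f(z)\in\operatorname{int}N$. Hence $f(z_m)\in N\cap H_i$ for approximating $z_m\in H_i$. If $z\in P_2$ and $f(z)\in N$, then $f(z)\in P_2\subset H_i$. These are exactly the ingredients you listed under your ``one-step statement''; this is where they are needed.

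\textbf{The bounded case of Step~3.} No one-step estimate is needed here. The one you propose (``$f(x)$ is close to $R_i$'') could not give membership in $\operatorname{int}_N R_i$ anyway. The argument closes directly. The limit point $f^n(y)$, with $y\in P_i$ and $f^k(y)\in N$ for $k\le n$, lies in $P_i$ by positive invariance of $P_i$. Hence it lies in $\operatorname{int}_N R_i$. This contradicts its being a limit of points of the closed set $N\setminus\operatorname{int}_N R_i$.

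Your unbounded case is right. For $i=1$ the limit lands in $\operatorname{Inv}^-(N,f)\subset\operatorname{int}_N P_1\subset\operatorname{int}_N R_1$. For $i=2$ one gets $y\in\operatorname{Inv}^+(N,f)\cap P_2=\emptyset$. Together with the bounded case, this shows $H_i\subset\operatorname{int}_N R_i$ for small $\delta$. Taking the smaller of the two $\delta$'s gives $H_2\subset H_1$, and Step~1 then finishes the proof.

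Finally, your fallback via Lemma~\ref{lem:cont_prop_1} is not available. That lemma produces some new pair of index pairs, not a refinement of the given $R$.
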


\begin{lemma}
[Lemma 7.5 in \cite{Mrozek1990}\label{lem:cont_prop_3}]Let $F:[0,1]\times
X\rightarrow X$ and denote $F_{\alpha}=F\left(  \alpha,\cdot\right)  $. Assume
that $N$ is an isolating neighborhood with respect to $F_{\alpha}$ for some
$\alpha\in\left[  0,1\right]  $. If $P,Q\in \operatorname{IP}\left(  N,F_{\alpha}\right)  $
are such that $P\subset\operatorname{int}_{N}Q$ and $P$ is related to $Q$,
then there exists a neighborhood $U$ of $\alpha$ in $\left[  0,1\right]  $
such that for every $\kappa\in U$ there exists $R\left(  \kappa\right)  \in
\operatorname{IP}\left(  N,\kappa\right)  $ satisfying $P\subset R\left(  \kappa\right)
\subset Q.$ Moreover, the inclusions $i:P\rightarrow R\left(  \kappa\right)
$, $\ j:R\left(  \kappa\right)  \rightarrow Q$ induce morphisms
\begin{align*}
i^{\ast}  &  :\left(  H^{\ast}\left(  R\left(  \kappa\right)  \right)
,I_{R\left(  \kappa\right)  }\right)  \rightarrow\left(  H^{\ast}\left(
P\right)  ,I_{P}\right)  ,\\
j^{\ast}  &  :\left(  H^{\ast}\left(  Q\right)  ,I_{Q}\right)  \rightarrow
\left(  H^{\ast}\left(  R\left(  \kappa\right)  \right)  ,I_{R\left(
\kappa\right)  }\right)
\end{align*}
in the category $\mathcal{E}$.
\end{lemma}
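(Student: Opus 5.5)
The plan is a perturbation argument: index pairs for $F_\alpha$ are stable under small changes of the parameter, and the two nested pairs $P\subset\operatorname{int}_N Q$ give enough room to interpolate a pair $R(\kappa)$ for $F_\kappa$ between them. Throughout I use that $N$ is compact, $[0,1]$ is compact and $F$ is uniformly continuous on $[0,1]\times N$.

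\textbf{Step 1 (open conditions at $\alpha$).} From the relation hypothesis, $\operatorname{cl}(Q_1\setminus P_2)$ is a compact subset of $\operatorname{int}N\cap F_\alpha^{-1}(\operatorname{int}N)$. Positive invariance of $P_i$ and $Q_i$ with respect to $N$, together with $P\subset\operatorname{int}_N Q$, gives
\[
F_\alpha(P_i)\cap N\subset P_i\subset\operatorname{int}_N Q_i .
\]
By compactness and continuity in the parameter, there is a neighbourhood $U$ of $\alpha$ such that for every $\kappa\in U$ the following hold:
\[
F_\kappa\bigl(\operatorname{cl}(Q_1\setminus P_2)\bigr)\subset\operatorname{int}N,
\qquad
F_\kappa(P_i)\cap N\subset\operatorname{int}_N Q_i .
\]
I would also shrink $U$ so that $\operatorname{Inv}(N,F_\kappa)\subset\operatorname{int}N$, which makes $N$ isolating for $F_\kappa$. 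This uses upper semicontinuity of $\kappa\mapsto\operatorname{Inv}(N,F_\kappa)$ and of the sets $\operatorname{Inv}^\pm(N,F_\kappa)$, proved by a standard diagonal/compactness argument on orbit segments. The same argument gives $\operatorname{Inv}^-(N,F_\kappa)\subset\operatorname{int}_N P_1$ and $\operatorname{Inv}^+(N,F_\kappa)\cap Q_2=\emptyset$ for $\kappa\in U$. This works because the corresponding statements hold at $\alpha$ with room to spare: at $\alpha$ we have $\operatorname{Inv}^+(N,F_\alpha)\subset N\setminus Q_2$, and the sets $\operatorname{Inv}^\pm$ vary upper semicontinuously.

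\textbf{Step 2 (construction of $R(\kappa)$).} Set
\[
R_1(\kappa):=\{F_\kappa^n(y)\mid y\in P_1,\ n\ge0,\ F_\kappa^j(y)\in N\ \text{for } 0\le j\le n\},
\]
and define $R_2(\kappa)$ in the same way starting from $P_2$. Both are positively invariant with respect to $N$ by construction, and both contain $P_1$, $P_2$ respectively.

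The key claim is that $R_i(\kappa)\subset Q_i$ and that the sets are compact. Containment follows by induction on $n$ from $F_\kappa(Q_i)\cap N\subset Q_i$. This last inclusion comes from Step 1 applied to $Q$ instead of $P$, after shrinking $U$ once more if necessary.

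For compactness I would show that the union stabilises after finitely many $n$. Points of $R_1\setminus R_2$ lie in $Q_1\setminus P_2$, where $F_\kappa$ maps into $\operatorname{int}N$. Orbits that stay in $N$ for a long time approach $\operatorname{Inv}^+(N,F_\kappa)$, and that set avoids $Q_2$ by Step 1. Hence orbits starting in $P_2$ must leave $N$ after a uniformly bounded number of steps. Orbits starting in $P_1\setminus P_2$ either fall into this bounded situation or, for all large $n$, enter a fixed compact neighbourhood of $\operatorname{Inv}^-$ that lies inside $P_1$.

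With these facts, the remaining index pair axioms follow: $\operatorname{Inv}^-(N,F_\kappa)\subset\operatorname{int}_N P_1\subset\operatorname{int}_N R_1$, $\operatorname{Inv}^+(N,F_\kappa)\cap R_2\subset\operatorname{Inv}^+(N,F_\kappa)\cap Q_2=\emptyset$, and $R_1\setminus R_2\subset Q_1\setminus P_2$. I expect this compactness/stabilisation step to be the main obstacle. If the direct argument fails, my fallback is to intersect each $R_i(\kappa)$ with the closure and work with the "$P_2$-exit-time" truncation instead.

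\textbf{Step 3 (morphisms in $\mathcal{E}$).} I need $i^\ast\circ I_{R(\kappa)}=I_P\circ i^\ast$, and the analogous identity for $j$. By definition~(\ref{eq:I_P-def}), this amounts to showing that
\[
F_\kappa:P\to T(R(\kappa))\quad\text{and}\quad F_\alpha:P\to T(P)\subset T(R(\kappa))
\]
induce the same map on cohomology modulo the inclusion isomorphisms. I would take $U$ to be an interval and use the homotopy $t\mapsto F_t$ for $t$ between $\alpha$ and $\kappa$. By Step 1, for each such $t$ we have $F_t(P_2)\subset Q_2\cup(X\setminus\operatorname{int}N)$ and $F_t(P_1)\subset Q_1\cup(X\setminus\operatorname{int}N)$, so this is a homotopy of maps of pairs $P\to T(Q)$. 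Composing with the inclusion isomorphisms $H^\ast(T(Q))\cong H^\ast(Q)$ then gives the commutation of the diagrams in both cases, and hence the required morphisms in $\mathcal{E}$.
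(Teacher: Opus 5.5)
The paper gives no proof of this lemma (it is imported from Mrozek), so your argument has to stand on its own. It breaks down in Step 3.

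Unwinding $I_P=(F_\alpha)^\ast_{P,T(P)}\circ i^{\ast-1}_{P,T(P)}$, the identity $i^\ast\circ I_{R(\kappa)}=I_P\circ i^\ast$ is equivalent to $(F_\kappa)^\ast_{P,T(R(\kappa))}=(F_\alpha)^\ast_{P,T(R(\kappa))}$ on all of $H^\ast(T(R(\kappa)))$. Likewise, $j^\ast\circ I_Q=I_{R(\kappa)}\circ j^\ast$ is equivalent to $(F_\alpha)^\ast_{R(\kappa),T(Q)}=(F_\kappa)^\ast_{R(\kappa),T(Q)}$. Your homotopy is a homotopy of pairs $P\to T(Q)$. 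It gives the first identity only on the image of $H^\ast(T(Q))\to H^\ast(T(R(\kappa)))$, which need not be onto. It says nothing about maps defined on $R(\kappa)$, which the second identity needs.

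For your $R(\kappa)$ the first identity is in fact false. Take $X=\mathbb{R}$, $N=[-1,1]$, $\alpha=0$, and $F_t=f+t$ with $f(x)=x/2$ for $x\le 1/2$ and $f(x)=x-1/4$ for $x\ge 1/2$. Let $P=([-1/2,1/2]\cup\{3/4\},\emptyset)$ and $Q=([-3/4,7/8],\emptyset)$; all hypotheses hold. For small $\kappa>0$ your hull is $R_1(\kappa)=[-1/2,1/2]\cup\{1/2+\kappa\}\cup\{3/4\}$, with $R_2(\kappa)=\emptyset$. On $H^0$ (values on the components) one has $I_{R(\kappa)}(a,b,c)=(a,a,b)$, $i^\ast(a,b,c)=(a,c)$ and $I_P(a,c)=(a,a)$. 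Hence $i^\ast I_{R(\kappa)}(a,b,c)=(a,b)\neq(a,a)=I_P\, i^\ast(a,b,c)$ whenever $b\neq a$. The cause is that $F_\alpha(3/4)$ and $F_\kappa(3/4)$ lie in different components of $R_1(\kappa)$, because the intermediate points $F_t(3/4)$ were never added.

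The missing idea is that $R$ must absorb the images under all intermediate maps. Fix a small closed interval $J\ni\alpha$ and let $R_i$ be the set of endpoints of chains $x_0\in P_i$, $x_{k+1}=F_{t_{k+1}}(x_k)\in N$ with $t_k\in J$. Then $F_t(P_i)\cap N\subset R_i$ and $F_t(R_i)\cap N\subset R_i$ for every $t\in J$. So once $R\subset Q$ is known, $t\mapsto F_t$ gives exactly the homotopies of pairs $P\to T(R)$ and $R\to T(Q)$ required above, and one $R$ works for all $\kappa\in J$.

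You also need to repair the containment argument in Step 2. The inclusion $F_\kappa(Q_i)\cap N\subset Q_i$ does not follow from Step 1, which used the room $P_i\subset\operatorname{int}_N Q_i$. There is no such room around $Q_i$, and positive invariance is destroyed by perturbation as soon as $F_\alpha$ maps a point of $Q_i$ onto the relative boundary of $Q_i$ in $N$. Use bounds uniform in the parameter instead:
\begin{itemize}
\item Choose $n_0,n_1$ with $N^-_{n_0}\subset\operatorname{int}_N P_1$ and $N^+_{n_1}\cap P_2=\emptyset$ for $F_\alpha$, where these are the endpoints, resp.\ starting points, of orbit segments of length $n$ in $N$.
\item By compactness both properties persist for chains with parameters in $J$ when $J$ is small. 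So only chains of length below $\max(n_0,n_1)$ matter.
\item For those chains, a limit argument as $J$ shrinks to $\alpha$ places the endpoints in $\operatorname{int}_N Q_i$, because the limiting $F_\alpha$-segments end in $P_i\subset\operatorname{int}_N Q_i$.
\end{itemize}
This gives $R\subset Q$ and compactness at once, and the remaining index pair axioms then follow as you indicate.
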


\begin{lemma}
[Theorem 6.4 in \cite{Mrozek1990}]\label{lem:inclusion_isom_ind_pairs} Assume
that $f:X\rightarrow X$ is a continuous map, $N$ is an isolating neighborhood
for $f$. If $P,Q\,\in\mathrm{IP}(N,f)$ satisfy $P\subset Q$, then the
inclusion $i_{P,Q}$ induces an isomorphism%
\[
\mathcal{L}\left(  i_{P,Q}^{\ast}\right)  :\mathcal{L}\left(  H^{\ast}\left(
Q_{1},Q_{2}\right)  ,I_{Q}\right)  \rightarrow\mathcal{L}\left(  H^{\ast
}\left(  P_{1},P_{2}\right)  ,I_{P}\right)
\]
of graded $R$-modules.
\end{lemma}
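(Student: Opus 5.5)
The plan is to show that $i_{P,Q}^{\ast}$ is a morphism in $\mathcal{E}$ which becomes invertible after reduction because it has an inverse ``up to a power of the index maps''. Concretely, I will construct, for some $n\in\mathbb{N}$, a homomorphism $\varphi:H^{\ast}(P_1,P_2)\to H^{\ast}(Q_1,Q_2)$ such that
\[
i_{P,Q}^{\ast}\circ\varphi=I_P^{n},\qquad \varphi\circ i_{P,Q}^{\ast}=I_Q^{n}.
\]
A purely algebraic fact about the Leray functor then finishes the proof. If $\alpha:(E,e)\to(F,f)$ and $\beta:(F,f)\to(E,e)$ are morphisms with $\alpha\beta=f^{n}$ and $\beta\alpha=e^{n}$, then $\mathcal{L}(\alpha)\mathcal{L}(\beta)=(f'')^{n}$ and $\mathcal{L}(\beta)\mathcal{L}(\alpha)=(e'')^{n}$. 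These are isomorphisms in $\mathcal{I}$, so $\mathcal{L}(\alpha)$ is invertible.

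\emph{Step 1: $i_{P,Q}^{\ast}$ is a morphism.} We have $T(P)\subset T(Q)$, and $f_{P,T(P)}$ is the restriction of $f_{Q,T(Q)}$. Naturality of the commuting square of inclusions $P\subset Q$, $T(P)\subset T(Q)$ then gives $I_P\circ i_{P,Q}^{\ast}=i_{P,Q}^{\ast}\circ I_Q$. Here we use that $i_{P,T(P)}^{\ast}$ and $i_{Q,T(Q)}^{\ast}$ are isomorphisms by the theorem on $T$ and $S$.

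\emph{Step 2: the geometric key claim.} There exists $n$ such that $f^{n}$ maps $(Q_1,Q_2)$ into $(T(P_1),T(P_2))$. The argument is by compactness, using the index pair axioms.
\begin{itemize}
\item[(a)] Since $\operatorname{Inv}^{-}(N,f)\subset\operatorname{int}_N P_1$, there is $n_1$ with the following property: if $x,f(x),\dots,f^{n_1}(x)\in N$, then $f^{n_1}(x)\in P_1$. Otherwise a limit of such orbit segments, of unbounded length, would produce a backward solution in $N$ ending outside $\operatorname{int}_N P_1$.
\item[(b)] Since $\operatorname{Inv}^{+}(N,f)\cap Q_2=\emptyset$ and $Q_2$ is positively invariant with respect to $N$, there is $n_2$ such that no point of $Q_2$ has its first $n_2$ iterates all in $N$.
\end{itemize}
Take $n\ge\max(n_1,n_2)$. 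To get $f^{n}(x)\in T(\cdot)$ for points whose orbit leaves $\operatorname{int}N$ before time $n$, I will not take a single power of $f$. Instead I will factor $f^{n}$ as a composition of maps between the intermediate pairs $T(Q)\to T(Q)\to\cdots\to T(P)$, using that $f$ maps $T$-pairs appropriately away from $\operatorname{int}N$. Alternatively, I will pass through the pairs $S(\cdot)$, whose inclusions also induce isomorphisms, so that exiting points are absorbed into $X\setminus\operatorname{int}N$ at each step.

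\emph{Step 3: the inverse.} Define
\[
\varphi:=(f^{n})^{\ast}_{Q,T(P)}\circ i_{P,T(P)}^{\ast-1}.
\]
Then the identities $i_{P,Q}^{\ast}\varphi=I_P^{n}$ and $\varphi\, i_{P,Q}^{\ast}=I_Q^{n}$ follow by naturality of induced maps. One writes $I_P^{n}$ as $(f^{n})^{\ast}_{P,T(P)}\circ i^{\ast-1}_{P,T(P)}$, which again uses Step 2 restricted to $P$, and compares the two routes through the commutative diagram of restrictions of $f^{n}$ and inclusions.

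I expect Step 2 to be the main obstacle: making $f^{n}$ genuinely a map of pairs $(Q_1,Q_2)\to T(P)$ while handling orbits that exit $\operatorname{int}N$ and possibly re-enter. I would first try proving the one-step statement $f(T(Q))\cap\operatorname{int}N$ lands in the right place, via positive invariance and axiom 3. If that fails, I would use the $S(P)$ construction together with the related-pairs lemmas to interpolate.
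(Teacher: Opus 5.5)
The paper quotes this lemma from Mrozek without proof, so I judge your argument on its own merits. Step 1, the compactness facts (a) and (b), and the algebraic reduction are all fine.

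\textbf{The gap is Step 2.} It is not a technicality: the claim is false. So is the identity $I_P^n=(f^n)^*_{P,T(P)}\circ i^{*-1}_{P,T(P)}$ used in Step 3, because for $n\ge 2$ the map $f^n$ need not send $P$ into $T(P)$ at all.

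Take $X=\mathbb{R}$ and $f$ odd with $f(x)=2x$ on $[0,2]$, $f(x)=12-4x$ on $[2,3]$, and $f\equiv 0$ on $[3,\infty)$. Let $N=[-1,1]$, so $\operatorname{Inv}(N,f)=\{0\}$. Put $P=(N,\{\tfrac12\le|x|\le1\})$ and $Q=(N,\{\tfrac1{64}\le|x|\le1\})$. Both are index pairs, $P\subset Q$, and $T(P_2)=\{|x|\ge\tfrac12\}$.
\begin{itemize}
\item The orbit $\tfrac34\mapsto\tfrac32\mapsto 3\mapsto 0\mapsto 0\mapsto\cdots$ leaves $N$ and re-enters at the fixed point. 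Hence $f^n(P_2)\not\subset T(P_2)$ for every $n\ge 3$.
\item Also $f^n(\tfrac1{64})=2^n/64<\tfrac12$ for $n\le 4$.
\end{itemize}
So no $n$ at all gives $f^n(Q)\subset T(P)$, whatever $n_1,n_2$ are.

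Your fallbacks do not repair this, because $f$ maps neither $T(Q)$ nor $S(Q)$ into itself. Indeed $3\in T(Q_2)$ but $f(3)=0\notin T(Q_2)$; and $\tfrac32\in S(Q_1)=[-2,2]$ but $f(\tfrac32)=3\notin S(Q_1)$. Thus $I_Q^n$ is only a zig-zag of maps and inverted excision isomorphisms. It is not induced by a single map of pairs, and naturality alone cannot produce $\varphi$.

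\textbf{The missing idea} is an iterate that \emph{forgets an orbit once it leaves}, so that re-entry becomes irrelevant. Here is one way to do it.
\begin{itemize}
\item Strong excision for the quotient map gives $H^*(P_1,P_2)\cong H^*(P_1/P_2,\ast)$. Since $S(P_1)\setminus S(P_2)=P_1\setminus P_2$, under this isomorphism $I_P$ is induced by the pointed map $f_P$, where $f_P(x)=f(x)$ if $x,f(x)\in P_1\setminus P_2$ and $f_P(x)=\ast$ otherwise. This map is continuous by axioms 1 and 3 of an index pair.
\item Then $I_P^n=(f_P^n)^*$, and $f_P^n$ collapses every point whose orbit leaves $P_1\setminus P_2$ before time $n$.
\item The inclusion induces $j:P_1/P_2\to Q_1/Q_2$ with $jf_P=f_Qj$.
\item For $n\ge n_1$ and $m\ge n_2$, set $\psi(x)=f^{n+m}(x)$ if $x,\dots,f^n(x)\in Q_1\setminus Q_2$ and $f^n(x),\dots,f^{n+m}(x)\in P_1\setminus P_2$; set $\psi(x)=\ast$ otherwise.
\end{itemize}
Your (a) gives $j\psi=f_Q^{n+m}$. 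Your (b), together with positive invariance, gives $\psi j=f_P^{n+m}$ and $\psi f_Q=f_P\psi$. It also gives continuity of $\psi$: a limit of admissible orbit segments cannot meet $Q_2$ up to time $n$, so if it is not admissible it meets $P_2$ and therefore ends in $P_2$.

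The last $m$ steps, truncated in $P$ rather than in $Q$, are genuinely needed. The truncated $f^n$ in $Q$ alone does not descend continuously to $P_1/P_2$; in the example, take points $y_k\uparrow 2^{-n}/64$. With $\varphi=\psi^*$, your algebraic lemma then finishes the proof.
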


The next fact seems to be folklore in the Conley index
theory, but we provide a short proof for convenience of the reader.

\begin{proposition}
\label{prop: lim_ind_pairs} Let $K$ be an isolated invariant set with respect to $f$ and let $N$ be its isolating neighborhood. The family $$\left\{P_1: P=(P_1,P_2) \in \operatorname{IP}(M,f) \; \text{and}\; M\subset N \; \text{is an isolating neighborhood for $K$} \right\}$$ is cofinal (see Section \ref{subsec:cohomology}) in the family of all closed neighborhoods of $K.$
\end{proposition}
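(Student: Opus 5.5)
Fix an arbitrary closed neighborhood $\Omega$ of $K$; it suffices to produce an isolating neighborhood $M\subset N$ of $K$ and an index pair $P\in\operatorname{IP}(M,f)$ with $P_1\subset\Omega$. The obvious first move is to shrink the isolating neighborhood: set $M:=N\cap\Omega'$, where $\Omega'\subset\Omega$ is a compact neighborhood of $K$. Such an $\Omega'$ exists because $K$ is compact and the ambient space $X$ is locally compact metric, so we may take a small closed $\varepsilon$-neighborhood of $K$ contained in $\Omega\cap\operatorname{int}N$. Then $M$ is compact and $K\subset\operatorname{int}M$. Since $K\subset\operatorname{Inv}(M,f)\subset\operatorname{Inv}(N,f)=K$, we get $K=\operatorname{Inv}(M,f)$, so $M$ is an isolating neighborhood for $K$ contained in $N$.

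Next, I would invoke Lemma \ref{thm:index_pairs_existence} (existence of index pairs, Theorem 4.4 in \cite{Mrozek1990}) for the isolating neighborhood $M$, with any open $W\supset K$ (for instance $W=\operatorname{int}M$). This gives an index pair $P=(P_1,P_2)\in\operatorname{IP}(M,f)$. By Definition \ref{def:ind_pair}, $P_1$ and $P_2$ are compact subsets of $M$, so $P_1\subset M\subset\Omega'\subset\Omega$. Property 2 of the definition gives $K\subset\operatorname{Inv}^-(M,f)\subset\operatorname{int}_M P_1$. Because $K\subset\operatorname{int}M$, an $M$-interior point of $P_1$ lying in $K$ is also an interior point in $X$, so $K\subset\operatorname{int}P_1$. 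Hence $P_1$ is a closed neighborhood of $K$, it belongs to the family in the statement, and it lies inside $\Omega$. This proves cofinality.

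The only point requiring care is the passage from $\operatorname{int}_M P_1$ to $\operatorname{int}_X P_1$. It is handled by intersecting: $\operatorname{int}_M P_1\cap\operatorname{int}M$ is open in $X$ and contains $K$. A secondary check is that the family really consists of closed neighborhoods, which is exactly this same argument. No dynamics beyond the existence lemma is needed.
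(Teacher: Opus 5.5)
Your proof is correct and follows essentially the same route as the paper: shrink the isolating neighborhood so that it lies inside $\Omega$, then take any index pair for it via Lemma \ref{thm:index_pairs_existence}. The paper simply uses $N\cap\Omega$, which is already compact, so your auxiliary $\varepsilon$-neighborhood is unnecessary; your extra check that $K\subset\operatorname{int}P_1$, so that each $P_1$ really is a closed neighborhood of $K$, fills in a detail the paper leaves implicit.
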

\begin{proof} 
Choose a closed neighborhood $\Omega$ of $K$. The compact set $\Omega^{\prime
}:=N\cap\Omega$ is an isolating neighborhood for $K$ as
well. Lemma \ref{thm:index_pairs_existence} 
implies that $\operatorname{IP}(\Omega^{\prime
},f) \neq \emptyset.$ Let $P\in\operatorname{IP}(\Omega',f)$ and observe that it satisfies $P_1\subset \Omega'\subset\Omega.$
\end{proof}


\section{External product for the Conley index}
\label{sec:ext_prod}

In this section, we give a construction of an external product for the Conley
index for maps on Banach vector bundles. This will be the main tool in our
proof of Theorem \ref{thm:cont_thm}.

Similar constructions have appeared, for example, in \cite{FloerConInd} for
the Conley index for flows, in \cite{srzednicki2000structure} for the Conley
index over a base and in \cite{Szybowski2007} for the Conley index over a
phase space. However, we are interested in obtaining the relationship between the Conley index and the cohomology ring of the base. Thus we will endow the index of the
invariant set with the structure of a left module over some subring of
$H^{\ast}(\Lambda)$. 

Let $(E,\Lambda,\pi)$ be a smooth Banach vector bundle with a compact base
space $\Lambda$ and let $f:E\rightarrow E$ be a continuous map.

For simplicity, let us denote
\begin{equation}
\tilde{f}:=\pi\circ {f_\Lambda}. \label{eq:def_f_lambda}
\end{equation}
(Here, $f_\Lambda = f\vert_{\Lambda}$ according to the notation introduced in Section \ref{subsec:notations}.) There is a particularly nice relationship between the maps $f$ and $\tilde{f}$.

\begin{proposition}
\label{def:admissible-map} With notations as above, we have
\begin{equation}
\tilde{f}\circ\pi\simeq\pi\circ f. \label{eq:admissibility}
\end{equation}
\end{proposition}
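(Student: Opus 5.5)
Here $\Lambda$ is identified with the zero section of $E$, so $f_\Lambda = f\circ s_0$ with $s_0:\Lambda\to E$, $\theta\mapsto(\theta;0,0)$, and $\tilde f = \pi\circ f\circ s_0$. The plan is to exploit the fact that the zero section is a homotopy inverse of the projection: $\pi\circ s_0=\operatorname{id}_\Lambda$ and $s_0\circ\pi\simeq\operatorname{id}_E$. Granting the second relation, we obtain
\[
\tilde f\circ\pi=\pi\circ f\circ s_0\circ\pi\simeq\pi\circ f\circ\operatorname{id}_E=\pi\circ f,
\]
because composing a homotopy on either side with continuous maps still gives a homotopy.

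The only step requiring care is the homotopy $s_0\circ\pi\simeq\operatorname{id}_E$. We take the fibrewise straight-line contraction
\[
G:[0,1]\times E\to E,\qquad G(t,(\theta;x,y))=(\theta;tx,ty).
\]
Then $G(0,\cdot)=s_0\circ\pi$ and $G(1,\cdot)=\operatorname{id}_E$. To check continuity, work in a local trivialization $\Phi_\theta$ over $U_\theta$ (taking the Whitney sum trivialization from Section \ref{sec:banach_vc_bundles}). There $G$ reads $(\lambda,v)\mapsto(\lambda,tv)$, which is continuous. This local description is independent of the chart, since transition functions are linear on fibres and hence commute with scalar multiplication. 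Because the sets $\pi^{-1}(U_\theta)$ form an open cover of $E$, $G$ is continuous on $[0,1]\times E$.

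Composing, the explicit homotopy is
\[
H(t,p)=\pi\bigl(f(G(t,p))\bigr),
\]
which runs from $\pi\circ f\circ s_0\circ\pi=\tilde f\circ\pi$ at $t=0$ to $\pi\circ f$ at $t=1$. This proves \eqref{eq:admissibility}. There is no real obstacle beyond the chart-independence of fibrewise scaling noted above. Compactness of $\Lambda$ and the Banach (isometric) structure are not needed here; only linearity of the transition maps is used.
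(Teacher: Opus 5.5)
Your proof is correct and is essentially the paper's argument. The paper also writes $\tilde f\circ\pi=\pi\circ f\circ i_{\Lambda,E}\circ\pi$ and uses the fibrewise contraction $H(t,(\theta;z))=(\theta;(1-t)z)$ between $\operatorname{id}_E$ and $i_{\Lambda,E}\circ\pi$; your local-trivialization check of continuity is a detail the paper leaves implicit.
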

\begin{proof}
    Note that $$\tilde{f} \circ \pi = \pi \circ f_\Lambda \circ \pi = \pi \circ f \circ i_{\Lambda,E} \circ \pi.$$ The assertion follows from the fact that $i_{\Lambda,E} \circ \pi$ is homotopic to the identity via the homotopy $H: [0, 1]\times E \rightarrow E$ defined by the formula $$H(t,(\theta;z)) = \left(\theta;(1-t)z\right).$$
    This completes the proof.
\end{proof}

Note that $\tilde{f}(\Lambda)\subset\Lambda$, so there is an induced homomorphism
on cohomology
\[
\tilde{f}^{\ast}:H^{\ast}(\Lambda)\rightarrow H^{\ast}(\Lambda).
\]
We now pay attention to a useful consequence of Proposition \ref{lem:admissibility}.
\begin{lemma}
\label{lem:admissibility} Assume that $K$ is an isolated invariant set for the
map $f$ with an isolating neighborhood $N$ and let $P=(P_{1},P_{2}%
)\in\operatorname{IP}(N,f)$. Then
\begin{equation*}
\pi_{P_1}^{\ast}\circ \tilde{f}^{\ast}=f_{P_{1},T\left(
P_{1}\right)  }^{\ast}\circ\pi_{T\left(  P_{1}\right)}^{\ast}.
\end{equation*}

\end{lemma}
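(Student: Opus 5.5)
The identity is a statement about maps from $P_1$ into $\Lambda$. The plan is to rewrite both sides as the map induced by a single continuous map $P_1\to\Lambda$, and then to use the homotopy from Proposition \ref{def:admissible-map}.

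First, by contravariant functoriality of Alexander--Spanier cohomology,
\[
\pi_{P_1}^{\ast}\circ\tilde f^{\ast}=(\tilde f\circ\pi_{P_1})^{\ast}, \qquad f_{P_1,T(P_1)}^{\ast}\circ\pi_{T(P_1)}^{\ast}=(\pi_{T(P_1)}\circ f_{P_1,T(P_1)})^{\ast}.
\]
Here $f_{P_1,T(P_1)}$ is well defined by the inclusion \eqref{eq:T-f-inclusion}, $f(P)\subset T(P)$. As maps $P_1\to\Lambda$, we have $\pi_{T(P_1)}\circ f_{P_1,T(P_1)}=(\pi\circ f)|_{P_1}$ and $\tilde f\circ\pi_{P_1}=(\tilde f\circ\pi)|_{P_1}$. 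So it suffices to show that $(\tilde f\circ\pi)|_{P_1}\simeq(\pi\circ f)|_{P_1}$ as maps $P_1\to\Lambda$, and then to invoke the homotopy axiom.

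For this I would use the explicit homotopy from the proof of Proposition \ref{def:admissible-map}. Set $G(t,p):=\pi\bigl(f(H(t,p))\bigr)$ with $H(t,(\theta;z))=(\theta;(1-t)z)$. This is a continuous map $[0,1]\times E\to\Lambda$ with $G(0,\cdot)=\pi\circ f$ and $G(1,\cdot)=\pi\circ f\circ i_{\Lambda,E}\circ\pi=\tilde f\circ\pi$. Restricting the domain to $[0,1]\times P_1$ gives the required homotopy of maps $P_1\to\Lambda$. Homotopy invariance of $H^\ast$ then yields the equality of the induced maps.

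The only point needing care is that the intermediate maps $H(t,\cdot)$ may carry $P_1$ outside $P_1$ (and outside $N$). This causes no difficulty, because the homotopy is only required to take values in $\Lambda$, not in $P_1$ or $T(P_1)$. No relative pairs are involved, since the lemma concerns the absolute groups $H^\ast(P_1)$ and $H^\ast(\Lambda)$. I therefore do not expect a genuine obstacle; the step to state carefully is the bookkeeping of domain and codomain restrictions in the notation $f_{P_1,T(P_1)}$ and $\pi_{T(P_1)}$.
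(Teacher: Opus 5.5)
Your proposal is correct and is essentially the paper's proof. Both use $f(P_1)\subset T(P_1)$ to factor $(\pi\circ f)|_{P_1}=\pi_{T(P_1)}\circ f_{P_1,T(P_1)}$, restrict the homotopy $\tilde f\circ\pi\simeq\pi\circ f$ of Proposition \ref{def:admissible-map} to $P_1$, and conclude by the homotopy axiom; the only difference is that you write the homotopy $\pi\circ f\circ H$ explicitly, which the paper leaves implicit.
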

\begin{proof}
From (\ref{eq:T-f-inclusion}) we know that
\begin{equation}
f\left(  P_{1}\right)  \subset T\left(  P_{1}\right)  .
\label{eq:T-f-inclusion-1}%
\end{equation}
Using (\ref{eq:admissibility}) and (\ref{eq:T-f-inclusion-1}) we obtain 
\[
\tilde{f}\circ\pi_{{P_{1}}}=\left(  \tilde{f}\circ\pi\right)_{P_{1}}\simeq(\pi\circ
f)_{P_{1}}=\pi_{T(P_{1})}\circ f_{P_{1},T(P_1)}.
\]
The proof now follows from the homotopy property of cohomology.
\end{proof}
\begin{definition}
Let $K$ be an isolated invariant set for the map $f$ with an isolating
neighborhood $N$ and let $P=(P_{1},P_{2})\in\operatorname{IP}(N,f)$. We
define the external product
\[
\ast: \operatorname{gim}(\tilde{f}^\ast)\times \operatorname{gim}(I_P')\rightarrow \operatorname{gim}(I_P')
\]
by the formula
\begin{equation}
\alpha\ast\lbrack v]=\left[  \pi_{P_{1}}^{\ast}(\alpha)\smile v\right]  ,
\label{eq:def_ext_product}%
\end{equation}
where $\smile:H^{\ast}\left(  P_{1}\right)  \times H^{\ast}\left(  P_{1}%
,P_{2}\right) \rightarrow H^\ast(P_1,P_2) $ is the relative cup product.
\end{definition}
\begin{proposition}
The external product \eqref{eq:def_ext_product} is well-defined, i. e. if $[v_1] = [v_2]$, then 
\begin{equation}
    \left[  \pi_{P_{1}}^{\ast}(\alpha)\smile v_1\right] = \left[  \pi_{P_{1}}^{\ast}(\alpha)\smile v_2\right]
\end{equation}
and $[v] \in \operatorname{gim}(I_P')$ implies that $\alpha \ast [v] \in \operatorname{gim}(I_P').$
\end{proposition}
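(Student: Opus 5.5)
The whole argument rests on one compatibility identity between the index map and the cup product with pulled-back classes:
\[
I_P\bigl(\pi_{P_1}^{\ast}(\tilde f^{\ast}\beta)\smile w\bigr)=\pi_{P_1}^{\ast}(\beta)\smile I_P(w),\qquad \beta\in H^\ast(\Lambda),\ w\in H^\ast(P_1,P_2).
\]
To prove it, write $w=i_{P,T(P)}^\ast(\tilde w)$ with $\tilde w\in H^\ast(T(P))$, which is possible because $i_{P,T(P)}^\ast$ is an isomorphism. By naturality of the relative cup product under the inclusion,
\[
\pi_{P_1}^\ast(\gamma)\smile w=i_{P,T(P)}^\ast\bigl(\pi_{T(P_1)}^\ast(\gamma)\smile\tilde w\bigr)\quad\text{for every }\gamma\in H^\ast(\Lambda).
\]
Applying $I_P=f_{P,T(P)}^\ast\circ i_{P,T(P)}^{\ast-1}$ with $\gamma=\tilde f^\ast\beta$ gives
\[
f_{P,T(P)}^\ast\bigl(\pi_{T(P_1)}^\ast(\tilde f^\ast\beta)\smile\tilde w\bigr)=f_{P_1,T(P_1)}^\ast\pi_{T(P_1)}^\ast(\tilde f^\ast\beta)\smile I_P(w).
\]
Lemma \ref{lem:admissibility} rewrites the first factor as $\pi_{P_1}^\ast(\tilde f^{\ast2}\beta)$, which is not quite what the identity claims. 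The cleaner form, obtained from the same computation without substituting $\gamma=\tilde f^\ast\beta$, is
\[
I_P\bigl(\pi_{P_1}^\ast(\gamma)\smile w\bigr)=\pi_{P_1}^\ast(\tilde f^\ast\gamma)\smile I_P(w).\qquad(\ast)
\]
This formula $(\ast)$ is the key step, and the rest follows from it.

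\emph{Well-definedness.} Suppose $[v_1]=[v_2]$, so $v_1-v_2\in\operatorname{gker}(I_P)$, i.e.\ $I_P^n(v_1-v_2)=0$ for some $n$. Iterating $(\ast)$ gives
\[
I_P^n\bigl(\pi_{P_1}^\ast(\alpha)\smile(v_1-v_2)\bigr)=\pi_{P_1}^\ast(\tilde f^{\ast n}\alpha)\smile I_P^n(v_1-v_2)=0.
\]
Hence $\pi_{P_1}^\ast(\alpha)\smile(v_1-v_2)\in\operatorname{gker}(I_P)$, and bilinearity of the cup product shows the two classes agree in $H^\ast(P)/\operatorname{gker}(I_P)$.

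\emph{Preservation of $\operatorname{gim}(I_P')$.} Let $\alpha\in\operatorname{gim}(\tilde f^\ast)$ and $[v]\in\operatorname{gim}(I_P')$, and fix $n$. Choose $\beta$ with $\tilde f^{\ast n}\beta=\alpha$ and $w$ with $[v]=I_P'^{\,n}[w]$. Then $(\ast)$ iterated gives
\[
I_P'^{\,n}\bigl[\pi_{P_1}^\ast(\beta)\smile w\bigr]=\bigl[\pi_{P_1}^\ast(\tilde f^{\ast n}\beta)\smile I_P^n w\bigr]=\bigl[\pi_{P_1}^\ast(\alpha)\smile v'\bigr],
\]
where $[v']=[v]$. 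By the well-definedness just proved, the right-hand side equals $\alpha\ast[v]$. So $\alpha\ast[v]\in I_P'^{\,n}(\cdot)$ for every $n$, and therefore $\alpha\ast[v]\in\operatorname{gim}(I_P')$.

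The main obstacle is establishing $(\ast)$ rigorously. This needs three ingredients: the naturality of the relative cup product for $f_{P,T(P)}$, which is a map of pairs, as stated in Section \ref{subsec:cohomology}; the naturality for the inclusion $i_{P,T(P)}$, which lets us pass between $P$ and $T(P)$; and Lemma \ref{lem:admissibility}, which identifies $f_{P_1,T(P_1)}^\ast\pi_{T(P_1)}^\ast$ with $\pi_{P_1}^\ast\tilde f^\ast$. Some care with the indices of $\tilde f^\ast$ is required so that exactly one application of $\tilde f^\ast$ appears per application of $I_P$.
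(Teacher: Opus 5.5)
Your proposal is correct and follows the paper's proof essentially verbatim. Your formula $(\ast)$ and its iterate are exactly the paper's identity \eqref{eq:pf_ext_prod_well_defined}, proved the same way (lifting $w$ through $i_{P,T(P)}^\ast$, naturality of the relative cup product, and Lemma \ref{lem:admissibility}), after which your $\operatorname{gker}$ and $\operatorname{gim}$ arguments coincide with parts (a) and (b) of the paper. The only blemish is your opening displayed identity: it is false as written, since its left side equals $\pi_{P_1}^\ast(\tilde f^{\ast 2}\beta)\smile I_P(w)$, and it should simply be deleted in favour of $(\ast)$.
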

\begin{proof}
It suffices to show that
\begin{enumerate}
\item[a)] if $v\in\operatorname{gker}(I_{P})$, then $\pi_{P_{1}}%
^{\ast}(\alpha)\smile v\in\operatorname{gker}(I_{P})$,

\item[b)] if $[v]\in\operatorname{gim}(I_{P}^{\prime})$, then $\left[
\pi_{P_{1}}^{\ast}(\alpha)\smile v\right]  \in\operatorname{gim}%
(I_{P}^{\prime})$.
\end{enumerate}

We will start by proving that
\begin{equation}
\label{eq:pf_ext_prod_well_defined}
I_{P}^{k}\left(  \pi_{P_{1}}^{\ast}(\alpha)\smile v\right)  =\pi
_{P_{1}}^{\ast}\left(  \tilde{f}^{\ast k}\left(  \alpha\right)  \right)
\smile I_{P}^{k}(v)
\end{equation}
for every $k \in \mathbb{N}.$ First, consider the case $k = 1$. Observe that $$\pi_{{P_{1}}}=\pi_{{T(P_{1})}
}\circ i_{P_{1},T\left(  P_{1}\right)},$$ which gives
\begin{equation}
\pi_{P_{1}}^{\ast}=i_{P_{1},T\left(  P_{1}\right)  }^{\ast}\circ
\pi_{T(P_{1})}^{\ast}\label{eq:temp-projection-inclusion-TP1}.
\end{equation}
Now we compute
\begin{align*}
&  I_{P}\left(  \pi_{P_{1}}^{\ast}(\alpha)\smile v\right) =\left(f_{P,\,T(P)}^{\ast}\circ i_{P,\,T(P)}^{\ast-1}\right)  \left(
\pi_{P_{1}}^{\ast}(\alpha)\smile v\right)  \text{ }\nonumber\\
&  =\left(  f_{P,\,T(P)}^{\ast}\circ i_{P,\,T(P)}^{\ast-1}\right)  \left(
\left(  i_{P_{1},T\left(  P_{1}\right)  }^{\ast}\circ\pi_{T(P_{1})
}^{\ast}\right)  (\alpha)\smile v\right) \nonumber\\
&  =\left(  f_{P,\,T(P)}^{\ast}\circ i_{P,\,T(P)}^{\ast-1}\right)  \left(
\left(  i_{P_{1},T\left(  P_{1}\right)  }^{\ast}\circ\pi_{T(P_{1})
}^{\ast}\right)  (\alpha)\smile\left(  i_{P,\,T(P)}^{\ast}\circ i_{P,\,T(P)}%
^{\ast-1}\right)  \left(  v\right)  \right)  \text{ }\nonumber\\
&  =\left(  f_{P,\,T(P)}^{\ast}\circ i_{P,\,T(P)}^{\ast-1}\circ i_{P,\,T(P)}%
^{\ast}\right)  \left(  \pi_{T(P_{1}),\Lambda}^{\ast}(\alpha)\smile i_{P,\,T(P)}%
^{\ast-1}\left(  v\right)  \right) \nonumber\\
&  =f_{P,\,T(P)}^{\ast}\left(  \pi_{T(P_{1})}^{\ast}(\alpha)\smile
i_{P,\,T(P)}^{\ast-1}\left(  v\right)  \right)  \nonumber\\
&  =\left(  f_{P_{1},T\left(  P_{1}\right)  }^{\ast}\circ\pi_{T\left(
P_{1}\right)}^{\ast}\right)  \left(  \alpha\right)  \smile\left(
f_{P,\,T(P)}^{\ast}\circ\text{ }i_{P,\,T(P)}^{\ast-1}\right)  \left(
v\right) \nonumber\\
&  =\pi_{P_{1}}^{\ast}\left(  \tilde{f}^{\ast}\left(  \alpha\right)
\right)  \smile I_{P}\left(  v\right).\nonumber
\end{align*}
Here, we have used \eqref{eq:temp-projection-inclusion-TP1}, naturality of the relative cup product and Lemma \ref{lem:admissibility}. The case $k > 1$ follows by induction.

For the proof of (a), we first note that if $v\in\operatorname{gker}(I_{P}),$
then $I_{P}^{k}(v)=0$ for some $k\in\mathbb{N}$. Using formula \eqref{eq:pf_ext_prod_well_defined}, we obtain
\[
I_{P}^{k}\left(  \pi_{P_{1}}^{\ast}(\alpha)\smile v\right)  =\pi
_{P_{1}}^{\ast}\left(  \tilde{f}^{\ast k}\left(  \alpha\right)  \right)
\smile I_{P}^{k}(v)=\pi_{P_{1}}^{\ast}\left(  \tilde{f}^{\ast
k}\left(  \alpha\right)  \right)  \smile0=0,
\]
in view of bilinearity of the cup product. This completes the proof of (a).

To prove (b), let $[v]\in\operatorname{gim}(I_{P}^{\prime})$ and $m \in \mathbb{N}$. Then $[v]=(I_{P}^{\prime
})^{m}([w])$ for some $w\in H^{\ast}\left(  P_{1},P_{2}\right) $ and we have
\[
v-I_{P}^{m}(w)\in\operatorname{gker}(I_{P})
\]
and from (a) we conclude that
\[
\pi_{P_{1}}^{\ast}(\alpha)\smile\left(  v-I_{P}^{m}(w)\right)
\in\operatorname{gker}(I_{P}).
\]
Consequently,
\[
\pi_{P_{1}}^{\ast}(\alpha)\smile v-\pi_{P_{1}}^{\ast}(\alpha)\smile
I_{P}^{m}(w)=\pi_{P_{1}}^{\ast}(\alpha)\smile\left(  v-I_{P}^{m}(w)\right)
\in\operatorname{gker}(I_{P}).
\]
Hence
\begin{equation}
\lbrack\pi_{P_{1}}^{\ast}(\alpha)\smile v]=[\pi_{P_{1}}^{\ast
}(\alpha)\smile I_{P}^{m}(w)]. \label{eq:pf_ext_prod_1}%
\end{equation}

Since $\alpha\in$ $\operatorname{gim}(\tilde{f}^{\ast}),$ 
there exists $p=p(m)\in H^{\ast}(\Lambda)$ such that $\alpha=\tilde{f}^{\ast
m}(p)$. We apply \eqref{eq:pf_ext_prod_well_defined} to obtain%
\[
\left(I_{P}'\right)^{m}\left[  \pi_{P_{1}
}^{\ast}\left(  p\right)  \smile w\right] = \left[  (\pi
_{P_{1}}^{\ast}\circ \tilde{f}^{\ast m})\left(  p\right)  \smile
I_{P}^{m}\left(  w\right)  \right] = 
\left[  \pi_{P_{1}}^{\ast}(\alpha)\smile v\right],
\]
as required. This concludes our proof.
\end{proof}

In view of Definition \ref{def:index_map} of the Conley index, we could regard
the formula (\ref{eq:def_ext_product}) as an external product on the index. However, we have to prove that the external product we
have just defined does not depend on the choice of an index pair or an
isolating neighborhood for an invariant set.

\begin{remark}
Due to the properties of the cup product, the generalized image
$\operatorname{gim}(I_{P}^{\prime})$ with the external product
\eqref{eq:def_ext_product} is a graded module over $\operatorname{gim}%
(\tilde{f}^{\ast})$.
\end{remark}

Thus we should prove that two module structures defined using different index pairs (possibly with respect to different isolating neighborhoods) for the same isolated invariant set are isomorphic. This is the content of the following theorem.

\begin{theorem}
\label{thm:ind_ext_prod} Assume that $M$ and $N$ are isolating neighborhoods
for the same isolated invariant set for a map $f$ and let
$P\in\operatorname{IP}\left(  M,f\right)  $ and $Q\in\operatorname{IP}\left(
N,f\right)  .$ Then there exists a bijection%
\[
i:\operatorname{gim}(I_{Q}^{\prime})\rightarrow\operatorname{gim}%
(I_{P}^{\prime})
\]
satisfying%
\begin{align}
i\left(  \left[  v_{1}+v_{2}\right]  \right)   &  =i\left(  \left[
v_{1}\right]  \right)  +i\left(  \left[  v_{2}\right]  \right)  \text{ \qquad
for all }v_{1},v_{2}\in\operatorname{gim}(I_{Q}^{\prime}), \label{eq:isom_mod}%
\\
i\left(  \alpha\ast\left[  v\right]  \right)   &  =\alpha\ast i\left(  \left[  v\right]
\right)  \text{ \qquad for all }\alpha\in\operatorname{gim}(\tilde{f}^{\ast
})\text{ and }v\in\operatorname{gim}(I_{Q}^{\prime}). \label{eq:isom_mod_}%
\end{align}

\end{theorem}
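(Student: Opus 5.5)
The plan is to reduce to the case of nested index pairs and then use Lemma \ref{lem:inclusion_isom_ind_pairs}, noting that inclusion-induced maps commute with both the index maps and the cup product.

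\textbf{Step 1: nested pairs in a common neighborhood.} Suppose first that $P,Q\in\operatorname{IP}(N,f)$ with $P\subset Q$. By Lemma \ref{lem:inclusion_isom_ind_pairs}, $\mathcal{L}(i_{P,Q}^{\ast})$ is an isomorphism of graded modules from $\operatorname{gim}(I_Q')$ to $\operatorname{gim}(I_P')$. This map is given by $[v]\mapsto[i_{P,Q}^{\ast}v]$, so \eqref{eq:isom_mod} is immediate. For \eqref{eq:isom_mod_}, observe that $\pi_{P_1}=\pi_{Q_1}\circ i_{P_1,Q_1}$. Naturality of the relative cup product then gives
\[
i_{P,Q}^{\ast}\left(\pi_{Q_1}^{\ast}(\alpha)\smile v\right)=\pi_{P_1}^{\ast}(\alpha)\smile i_{P,Q}^{\ast}(v).
\]
Passing to classes, this is exactly $i(\alpha\ast[v])=\alpha\ast i([v])$. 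Here I use that $i^\ast_{P,Q}$ is a morphism in $\mathcal{E}$, which is part of the content of Lemma \ref{lem:inclusion_isom_ind_pairs}.

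\textbf{Step 2: arbitrary pairs in a common neighborhood.} Now let $P,Q\in\operatorname{IP}(N,f)$ be arbitrary. I look for a third pair $R\in\operatorname{IP}(N,f)$ with $R\subset P$ and $R\subset Q$. Set $W=(P_1\setminus P_2)\cap(Q_1\setminus Q_2)$ and take a neighborhood of $K$ inside it (or argue via interiors). Lemma \ref{thm:index_pairs_existence} gives $R$ with $R_1\setminus R_2$ small, but I still need the actual inclusions $R\subset P\cap Q$.

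The cleaner approach is the standard Mrozek trick: the pair $(P_1\cap Q_1,\,(P_1\cap Q_2)\cup(P_2\cap Q_1))$ should itself be an index pair in $N$. Positive invariance follows from that of each factor, and conditions 2 and 3 of Definition \ref{def:ind_pair} follow from those for $P$ and $Q$. This pair is contained in both $P$ and $Q$ as pairs.

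Composing the isomorphism for $R\subset Q$ with the inverse of the one for $R\subset P$ gives the required bijection. Inverses and composites of additive maps satisfying \eqref{eq:isom_mod_} again satisfy it.

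\textbf{Step 3: different neighborhoods $M$ and $N$.} Here $M\cap N$ is again an isolating neighborhood for $K$. It suffices to compare $P\in\operatorname{IP}(M,f)$ with some pair in $\operatorname{IP}(M\cap N,f)$, and symmetrically for $N$. In Mrozek's proof of independence of the index, the pairs are compared by intersecting $P$ with a small neighborhood, i.e. by passing to $P\cap(M\cap N)$-type pairs, or by taking a pair in $\operatorname{IP}(M\cap N)$ that also lies in $\operatorname{IP}(M)$. The resulting isomorphisms are in every case induced by inclusions or by strong excision (Theorem \ref{thm:str_excision}). Any such map is induced by a continuous map of pairs commuting with $\pi$, so the naturality argument of Step 1 goes through verbatim.

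\textbf{Main obstacle.} The main difficulty is Step 3 together with the verification in Step 2 that the intersected pair is an index pair. I would follow Mrozek's proof of Theorem 6.4 and its consequences, checking at each stage that the connecting isomorphism comes from an inclusion. An isomorphism arising from an inclusion automatically respects $\ast$ by naturality of the cup product, since $\pi$ restricts compatibly.
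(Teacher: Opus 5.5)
Your overall strategy matches the paper's: connect any two index pairs by inclusions, then use $\pi_{P_1}=\pi_{Q_1}\circ i_{P_1,Q_1}$ together with naturality of the cup product. Step~1 is exactly Lemmas~\ref{lem:dominates_isom} and \ref{lem:inclusion_dominates}.

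However, Step~2 rests on a false claim. The pair $R=(P_1\cap Q_1,(P_1\cap Q_2)\cup(P_2\cap Q_1))$ is an index pair in $N$, since its difference set is $(P_1\setminus P_2)\cap(Q_1\setminus Q_2)$. But in general it is contained in neither $P$ nor $Q$ as a pair, because $R_2\supset P_1\cap Q_2$, which need not lie in $P_2$. For instance, take $f(x,y)=(2x,y/2)$, $N=[-1,1]^2$, $P_1=Q_1=[-1,1]\times[-\tfrac12,\tfrac12]$, $P_2=\{(x,y)\in P_1:|x|\geq\tfrac12\}$ and $Q_2=\{(x,y)\in Q_1:|x|\geq\tfrac14\}$. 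Both are index pairs, and your $R$ is just $Q$, which is not contained in $P$. So the inclusion $R\to P$ is not a map of pairs, and Step~1 cannot be applied. The correct and simpler choice, used in Lemma~\ref{lem:domination_1}, is the componentwise intersection $(P_1\cap Q_1,P_2\cap Q_2)$. Positive invariance and condition 2 of Definition~\ref{def:ind_pair} pass to intersections, and condition 3 holds since $(P_1\cap Q_1)\setminus(P_2\cap Q_2)\subset(P_1\setminus P_2)\cup(Q_1\setminus Q_2)$.

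Step~3 is where the real work is, and your sketch does not supply the needed construction; neither option you name works as stated.
\begin{itemize}
\item Intersecting an arbitrary $P\in\operatorname{IP}(M,f)$ with $M\cap N$ can violate condition 3, since $P_1\setminus P_2$ may reach $\partial N$. It also changes the difference set, so strong excision does not apply.
\item A pair $S\in\operatorname{IP}(M\cap N,f)$ is generally not in $\operatorname{IP}(M,f)$: the requirement $\operatorname{Inv}^-(M,f)\subset\operatorname{int}_M S_1$ is impossible once $\operatorname{Inv}^-(M,f)\not\subset N$.
\end{itemize}
The missing idea is to first shrink the exit region inside the larger neighborhood. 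For $M\subset N$, apply Lemma~\ref{thm:index_pairs_existence} in $N$ with the open set $W=\operatorname{int}M\cap f^{-1}(\operatorname{int}M)\supset K$ to get $R\in\operatorname{IP}(N,f)$ with $R_1\setminus R_2\subset W$. Then $S=(R_1\cap M,R_2\cap M)\in\operatorname{IP}(M,f)$, $S\subset R$ and $S_1\setminus S_2=R_1\setminus R_2$, so $i_{S,R}^\ast$ is an isomorphism by Theorem~\ref{thm:str_excision}. Now connect $Q$ to $R$ within $\operatorname{IP}(N,f)$ and $P$ to $S$ within $\operatorname{IP}(M,f)$ using the corrected Step~2, and handle general $M,N$ through $M\cap N$.

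Finally, ``verbatim'' is too quick. Since $S$ and $R$ live in different isolating neighborhoods, Lemma~\ref{lem:inclusion_isom_ind_pairs} does not tell you that $i_{S,R}^\ast$ intertwines $I_R$ and $I_S$. You should check this directly, e.g.\ from $T(R)\subset T(S)$, by factoring $f_{S,T(S)}$ and $i_{S,T(S)}$ through $T(R)$. The paper also leaves this point implicit.
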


In other words, Theorem \ref{thm:ind_ext_prod} says that $\operatorname{gim}%
(I_{P}^{\prime})$ and $\operatorname{gim}(I_{Q}^{\prime})$ are isomorphic as
left modules over $\operatorname{gim}(\tilde{f}^{\ast}).$ We will divide the
proof of Theorem \ref{thm:ind_ext_prod} into several lemmas, in which we will
adapt certain ideas from \cite{FloerConInd} and \cite{Mrozek1990} to our
setting. We begin with a definition.

\begin{definition}
\label{def:dominates} Let $N$ and $M$ be isolating neighborhoods for maps $f$ and $g$, let $P=(P_{1},P_{2})\in\operatorname{IP}(N,f)$
and $Q=(Q_{1},Q_{2})\in\operatorname{IP}(M,g)$. We will write $P \preceq Q$ if there exists a continuous
map
\[
\varphi:(P_{1},P_{2})\rightarrow(Q_{1},Q_{2})
\]
such that 
\begin{itemize}
    \item $\varphi^*$ is a morphism in the category $\mathcal{E}$, i.e. \begin{equation} \varphi^* \circ I_Q = I_P \circ \varphi^*, \label{eq:def_dominates0}
    \end{equation}
    \item the homomorphism \begin{equation}
\mathcal{L}(\varphi^{\ast}):\mathcal{L}\left(  H^{\ast}(Q_{1},Q_{2}%
),I_{Q}\right)  \rightarrow\mathcal{L}\left(  H^{\ast}(P_{1},P_{2}%
),I_{P}\right)  \label{eq:def_dominates1}%
\end{equation}
is an isomorphism of graded $R$-modules,
\item we have the equality
\begin{equation}
\pi_{Q_{1}}\circ\;\varphi\simeq\pi_{P_{1}}. \label{eq:def_dominates2}%
\end{equation}
\end{itemize}
\end{definition}

Note that condition \eqref{eq:def_dominates1} from Definition
\ref{def:dominates} implies that the Conley indices of $\operatorname{Inv}(N,f)$
and $\operatorname{Inv}(M,g)$ coincide. The condition \eqref{eq:def_dominates2} ensures that $\mathcal{L}(\varphi^*)$ preserves the external product.

\begin{lemma}
\label{lem:dominates_isom} Suppose that $f$ and $g$ are homotopic
maps. If $P\in\operatorname{IP}(N,f),$ $Q\in\operatorname{IP}(M,g)$ and $Q \preceq P$, then there exists a bijection $i:\operatorname{gim}%
(I_{Q}^{\prime})\rightarrow\operatorname{gim}(I_{P}^{\prime})$ satisfying
(\ref{eq:isom_mod}) and (\ref{eq:isom_mod_}).
\end{lemma}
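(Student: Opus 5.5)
The plan is to take $i$ to be the inverse of the isomorphism supplied by the definition of $Q\preceq P$. By Definition \ref{def:dominates} applied to $Q\preceq P$, there is a continuous map $\varphi:(Q_1,Q_2)\rightarrow(P_1,P_2)$ with three properties:
\begin{itemize}
\item $\varphi^\ast\circ I_P=I_Q\circ\varphi^\ast$;
\item $\mathcal{L}(\varphi^\ast):\mathcal{L}(H^\ast(P_1,P_2),I_P)\rightarrow\mathcal{L}(H^\ast(Q_1,Q_2),I_Q)$ is an isomorphism;
\item $\pi_{P_1}\circ\varphi\simeq\pi_{Q_1}$.
\end{itemize}
Unwinding the Leray functor, $\mathcal{L}(\varphi^\ast)=(\varphi^{\ast\prime})''$ is the map $\operatorname{gim}(I_P')\ni[v]\mapsto[\varphi^\ast(v)]\in\operatorname{gim}(I_Q')$. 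It is well defined because $\varphi^\ast$ maps $\operatorname{gker}(I_P)$ into $\operatorname{gker}(I_Q)$ and $\varphi^{\ast\prime}$ maps generalized images into generalized images, both by the intertwining relation. I will set $i:=\mathcal{L}(\varphi^\ast)^{-1}$. This is a bijection, and additivity (\ref{eq:isom_mod}) holds because it is the inverse of a module homomorphism.

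First I must check that both external products are taken over the same ring. The products on $\operatorname{gim}(I_Q')$ and $\operatorname{gim}(I_P')$ use $\operatorname{gim}(\tilde g^\ast)$ and $\operatorname{gim}(\tilde f^\ast)$ respectively. If $F$ is a homotopy from $f$ to $g$, then $\pi\circ F$ restricted to $[0,1]\times\Lambda$ is a homotopy from $\tilde f$ to $\tilde g$. Hence $\tilde f^\ast=\tilde g^\ast$ by the homotopy axiom, so the two generalized images coincide and (\ref{eq:isom_mod_}) makes sense.

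The key step is compatibility with the product. For $\alpha\in\operatorname{gim}(\tilde f^\ast)$ and $v\in H^\ast(P_1,P_2)$, I combine naturality of the relative cup product with the homotopy $\pi_{P_1}\circ\varphi\simeq\pi_{Q_1}$ to get
\[
\varphi^\ast\left(\pi_{P_1}^\ast(\alpha)\smile v\right)=\varphi_{Q_1,P_1}^\ast\left(\pi_{P_1}^\ast(\alpha)\right)\smile\varphi^\ast(v)=\pi_{Q_1}^\ast(\alpha)\smile\varphi^\ast(v).
\]
Passing to classes, this says $\mathcal{L}(\varphi^\ast)(\alpha\ast[v])=\alpha\ast\mathcal{L}(\varphi^\ast)([v])$ for $[v]\in\operatorname{gim}(I_P')$. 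Here I use that the class $[\pi_{P_1}^\ast(\alpha)\smile v]$ lies in $\operatorname{gim}(I_P')$, which is the well-definedness proposition above.

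Finally, I apply the last identity to $[w]=i([u])$ with $[u]\in\operatorname{gim}(I_Q')$. This gives $\mathcal{L}(\varphi^\ast)(\alpha\ast i[u])=\alpha\ast[u]$, and applying $i$ to both sides yields (\ref{eq:isom_mod_}).

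I expect the main obstacle to be bookkeeping rather than mathematics. One must check that the Leray-reduced map really acts as $[v]\mapsto[\varphi^\ast v]$ on the generalized images, so that the cup-product computation at cohomology level descends to classes. One must also make sure the identification of the acting rings $\operatorname{gim}(\tilde f^\ast)=\operatorname{gim}(\tilde g^\ast)$ via the homotopy is used consistently on both sides.
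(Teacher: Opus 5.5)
Your proof is correct and follows the paper's argument: you take the Leray-reduced isomorphism $\mathcal{L}(\varphi^\ast)$ from the definition of $\preceq$, identify $\operatorname{gim}(\tilde f^\ast)=\operatorname{gim}(\tilde g^\ast)$ via the homotopy, and get compatibility with $\ast$ from naturality of the relative cup product together with $\pi_{P_1}\circ\varphi\simeq\pi_{Q_1}$. The one difference is that you set $i:=\mathcal{L}(\varphi^\ast)^{-1}$, which is the right choice for the direction $\varphi:(Q_1,Q_2)\to(P_1,P_2)$ that $Q\preceq P$ gives; the paper instead writes $i=\mathcal{L}(\varphi^\ast)$ and computes as if $\varphi$ went from $P$ to $Q$, so your version fixes a direction slip in the paper's write-up.
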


\begin{proof}
Note that $\operatorname{gim}(\tilde{f}^{\ast})\simeq\operatorname{gim}%
(g_{\Lambda}^{\ast}),$ because $f$ and $g$ are homotopic and therefore their
induced maps on cohomology coincide.

We will show that we can take $i=\mathcal{L}(\varphi^{\ast})$. Since
$\mathcal{L}(\varphi^{\ast})$ is an isomorphism of graded $R$-modules, where
$R$ is the coefficient ring for cohomology, it preserves addition, i. e. we
have
\[
\mathcal{L}(\varphi^{\ast})\left(  \left[  v_{1}+v_{2}\right]  \right)
=\mathcal{L}(\varphi^{\ast})\left(  \left[  v_{1}\right]  \right)
+\mathcal{L}(\varphi^{\ast})\left(  \left[  v_{2}\right]  \right)
\]
for all $v_{1},v_{2}\in\operatorname{gim}(I_{Q}^{\prime})=\mathcal{L}\left(
H^{\ast}(Q_{1},Q_{2}),I_{Q}\right)  $ and the condition (\ref{eq:isom_mod}) is
met. To show that the condition (\ref{eq:isom_mod_}) is satisfied, let
$\alpha\in\operatorname{gim}(\tilde{f}^{\ast})$ and $[v]\in\operatorname{gim}%
(I_{Q}^{\prime})$. By the definition of the Leray functor and \eqref{eq:def_dominates2}, we have
\begin{align*}
\mathcal{L}(\varphi^{\ast})\left(  \alpha\ast\lbrack v]\right)   &  =\mathcal{L}%
(\varphi^{\ast})\left(  \left[  \pi_{Q_{1}}^{\ast}(\alpha)\smile v\right]
\right)  =\left[  \varphi^{\ast}(\pi_{Q_{1}}^{\ast}(\alpha)\smile v)\right] \\
&  =\left[  \varphi^{\ast}(\pi_{Q_{1}}^{\ast}(\alpha))\smile\varphi^{\ast
}(v)\right] =\left[  (\pi_{Q_{1}}\circ\varphi)^{\ast}(\alpha)\smile
\varphi^{\ast}(v)\right]  \text{ }\\
&  =\left[  \pi_{P_{1}}^{\ast}\left(\alpha\right)  \smile\varphi^{\ast
}(v)\right] =\alpha\ast\lbrack\varphi^{\ast}(v)]=\alpha\ast\mathcal{L}(\varphi^{\ast})\left(  [v]\right),
\end{align*}
which proves (\ref{eq:isom_mod_}). This completes the proof.
\end{proof}

To prove Theorem \ref{thm:ind_ext_prod}, we will show that for any index pairs
$P\,$\ and $Q\,$, there exists an index pair $R$ such that $R \preceq P$
and $R \preceq Q$ and then use Lemma \ref{lem:dominates_isom}.

\begin{lemma}
\label{lem:inclusion_dominates} If $P,Q\in\operatorname{IP}(N,f)$ are index
pairs such that $P\subset Q$, then $P \preceq Q$.
\end{lemma}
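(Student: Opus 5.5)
We take the inclusion $\varphi:=i_{P,Q}:(P_1,P_2)\rightarrow(Q_1,Q_2)$ as the map required in Definition \ref{def:dominates} (here both pairs are index pairs for the same map $f$ and the same isolating neighborhood $N$), and check the three conditions in turn.

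\textbf{Condition \eqref{eq:def_dominates1} and \eqref{eq:def_dominates2}.} Condition \eqref{eq:def_dominates2} is immediate: $\pi_{Q_1}\circ i_{P_1,Q_1}=\pi_{P_1}$ holds as an equality of maps, not merely up to homotopy. Condition \eqref{eq:def_dominates1} is exactly Lemma \ref{lem:inclusion_isom_ind_pairs}, once we know that $i_{P,Q}^\ast$ is a morphism in $\mathcal{E}$. So the substance of the proof is the commutation relation \eqref{eq:def_dominates0}.

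\textbf{Condition \eqref{eq:def_dominates0}.} We must show $i_{P,Q}^\ast\circ I_Q=I_P\circ i_{P,Q}^\ast$. Since $T(P)=P\cup(X\setminus\operatorname{int}N)$ and $T(Q)=Q\cup(X\setminus\operatorname{int}N)$, the inclusion $P\subset Q$ gives $T(P)\subset T(Q)$. We then have the commutative square of maps of pairs
\[
i_{T(P),T(Q)}\circ f_{P,T(P)}=f_{Q,T(Q)}\circ i_{P,Q},\qquad i_{T(P),T(Q)}\circ i_{P,T(P)}=i_{Q,T(Q)}\circ i_{P,Q},
\]
both of which are obvious at the level of points. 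Applying cohomology gives
\[
f_{P,T(P)}^\ast\circ i_{T(P),T(Q)}^\ast=i_{P,Q}^\ast\circ f_{Q,T(Q)}^\ast
\quad\text{and}\quad
i_{P,T(P)}^\ast\circ i_{T(P),T(Q)}^\ast=i_{P,Q}^\ast\circ i_{Q,T(Q)}^\ast .
\]
Because $i_{P,T(P)}^\ast$ and $i_{Q,T(Q)}^\ast$ are isomorphisms (Theorem 2.3 of \cite{Mrozek1990}, quoted above), the second identity can be rewritten as
\[
i_{T(P),T(Q)}^\ast\circ i_{Q,T(Q)}^{\ast-1}=i_{P,T(P)}^{\ast-1}\circ i_{P,Q}^\ast .
\]
Substituting this into the first identity yields
\[
i_{P,Q}^\ast\circ I_Q=i_{P,Q}^\ast f_{Q,T(Q)}^\ast i_{Q,T(Q)}^{\ast-1}=f_{P,T(P)}^\ast i_{T(P),T(Q)}^\ast i_{Q,T(Q)}^{\ast-1}=f_{P,T(P)}^\ast i_{P,T(P)}^{\ast-1}i_{P,Q}^\ast=I_P\circ i_{P,Q}^\ast,
\]
which is \eqref{eq:def_dominates0}.

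The only point requiring care is this diagram chase, in particular confirming $T(P)\subset T(Q)$ and that the required inverses exist. Once the morphism property is established, the rest follows directly from the cited lemma.
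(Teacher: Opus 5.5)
Your proposal is correct and takes the same approach as the paper: $\varphi=i_{P,Q}$, Lemma \ref{lem:inclusion_isom_ind_pairs} for \eqref{eq:def_dominates1}, and an exact equality for \eqref{eq:def_dominates2}. The only difference is that you write out the diagram chase via $T(P)\subset T(Q)$ and the invertibility of $i^{\ast}_{P,T(P)}$ and $i^{\ast}_{Q,T(Q)}$, which the paper dismisses as ``direct computation''; your verification of \eqref{eq:def_dominates0} is correct.
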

\begin{proof} The
equalities \eqref{eq:def_dominates0} and \eqref{eq:def_dominates2} follow from direct computation.
The fact that inclusion induces an isomorphism (\ref{eq:def_dominates1}) is
the content of Lemma \ref{lem:inclusion_isom_ind_pairs}.
\end{proof}

\begin{lemma}
\label{lem:domination_1} If $P,Q\in\operatorname{IP}(N,f)$, then there exists
an index pair $T\in\operatorname{IP}(N,f)$ such that $T \preceq P$ and $T \preceq Q$. In particular, there is an isomorphism $i:\operatorname{gim}%
(I_{Q}^{\prime})\rightarrow\operatorname{gim}(I_{P}^{\prime})$ satisfying
\eqref{eq:isom_mod} and \eqref{eq:isom_mod_}.
\end{lemma}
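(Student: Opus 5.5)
The plan is to find an index pair contained in both $P$ and $Q$, invoke Lemma \ref{lem:inclusion_dominates} for each inclusion, and then combine the two resulting bijections from Lemma \ref{lem:dominates_isom}.

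\textbf{Construction of $T$.} The set $W:=\operatorname{int}_N(P_1\setminus P_2)\cap\operatorname{int}_N(Q_1\setminus Q_2)$ looks like an open set containing $K=\operatorname{Inv}(N,f)$, but this is not guaranteed. So I would take
\[
T_1:=P_1\cap Q_1,\qquad T_2:=(P_1\cap Q_2)\cup(Q_1\cap P_2).
\]
This is the standard intersection of index pairs, and I expect it to satisfy Definition \ref{def:ind_pair}. The verification goes condition by condition.

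\emph{Compactness.} Both $T_1$ and $T_2$ are compact subsets of $N$.

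\emph{Positive invariance.} If $x\in T_1$ and $f(x)\in N$, then $f(x)\in P_1\cap Q_1$, because each of $P_1$ and $Q_1$ is positively invariant with respect to $N$. If $x\in P_1\cap Q_2$ and $f(x)\in N$, then $f(x)\in P_1\cap Q_2$ in the same way. The other piece of $T_2$ is handled symmetrically.

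\emph{Condition 2.} We have $\operatorname{Inv}^-(N,f)\subset\operatorname{int}_N P_1\cap\operatorname{int}_N Q_1=\operatorname{int}_N T_1$. Moreover, $\operatorname{Inv}^+(N,f)$ misses both $P_2$ and $Q_2$, hence it misses $T_2$.

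\emph{Condition 3.} Since $T_1\setminus T_2\subset P_1\setminus P_2$, it lies in $\operatorname{int}N\cap f^{-1}(\operatorname{int}N)$.

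Hence $T\in\operatorname{IP}(N,f)$, with $T\subset P$ and $T\subset Q$ as pairs. This gives $T\subset P$ and $T\subset Q$, since $T_2\subset P_2$ would need to hold. Here I have to be careful: $T_2\not\subset P_2$ in general, because $P_1\cap Q_2\not\subset P_2$. This inclusion of pairs is the main obstacle.

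\textbf{Handling the obstacle.} First I would try to avoid requiring inclusion of pairs. Lemma \ref{lem:inclusion_dominates} only uses that the inclusion is a map of pairs, together with Lemma \ref{lem:inclusion_isom_ind_pairs}. If that fails, I would instead use Lemma \ref{thm:index_pairs_existence} and Lemma \ref{lem:cont_prop_1}. These would produce a pair $T\in\operatorname{IP}(N,f)$ with $T_1\subset P_1\cap Q_1$ and $T_1\setminus T_2$ small, and then I would modify the exit set to $T_2':=T_2\cup(T_1\cap(P_2\cup Q_2))$. Conditions 1–3 must then be rechecked for $T_2'$. Positive invariance holds for the same reason as above. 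For condition 2, $\operatorname{Inv}^+(N,f)$ misses $P_2\cup Q_2$. Then $(T_1,T_2')$ is an index pair with $(T_1,T_2')\subset P$ and $(T_1,T_2')\subset Q$ as pairs. Note that the pair $(P_1\cap Q_1,\,(P_1\cap Q_1)\cap(P_2\cup Q_2))$ already does this directly. Its positive invariance follows from that of $P_1,P_2,Q_1,Q_2$, and its index-pair conditions were checked above.

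\textbf{Conclusion.} Lemma \ref{lem:inclusion_dominates} gives $T\preceq P$ and $T\preceq Q$. Lemma \ref{lem:dominates_isom}, applied with $g=f$, then yields module bijections $i_P:\operatorname{gim}(I_T')\to\operatorname{gim}(I_P')$ and $i_Q:\operatorname{gim}(I_T')\to\operatorname{gim}(I_Q')$. The composite $i:=i_P\circ i_Q^{-1}$ is the required isomorphism, because inverses and composites of additive maps satisfying \eqref{eq:isom_mod_} again satisfy \eqref{eq:isom_mod} and \eqref{eq:isom_mod_}.
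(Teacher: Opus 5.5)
There is a genuine gap. The pair you finally settle on, $(P_1\cap Q_1,\,(P_1\cap Q_1)\cap(P_2\cup Q_2))$, is the same as your first $T$, since $T_2=T_1\cap(P_2\cup Q_2)$ when $P_2\subset P_1$ and $Q_2\subset Q_1$. It is an index pair, but it is not contained in $P$ or in $Q$ as a pair. As you correctly noted yourself, $T_1\cap Q_2\not\subset P_2$ in general, so your closing claim that this pair ``already does this directly'' contradicts your own observation.

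Neither fallback repairs this. The inclusion $T_1\hookrightarrow P_1$ does not carry $T_2$ into $P_2$, so it is not a map of pairs $T\to P$ at all. Hence Lemma \ref{lem:inclusion_dominates}, which relies on Lemma \ref{lem:inclusion_isom_ind_pairs} and needs $T\subset P$, cannot be invoked. The enlarged exit set $T_2'=T_2\cup(T_1\cap(P_2\cup Q_2))$ still contains $T_1\cap Q_2$, so the same obstruction remains. Without $T\subset P$ and $T\subset Q$ you have no $T\preceq P$ or $T\preceq Q$, and your conclusion paragraph has nothing to act on.

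The missing idea is to intersect the exit sets instead of taking their union: set $T:=P\cap Q=(P_1\cap Q_1,\,P_2\cap Q_2)$, which is exactly the paper's choice. Then $T\subset P$ and $T\subset Q$ as pairs, and $T\in\operatorname{IP}(N,f)$:
\begin{itemize}
\item intersections of sets positively invariant with respect to $N$ are positively invariant;
\item $\operatorname{Inv}^-(N,f)\subset\operatorname{int}_N P_1\cap\operatorname{int}_N Q_1=\operatorname{int}_N T_1$, and $\operatorname{Inv}^+(N,f)$ misses $P_2\supset T_2$;
\item condition 3 survives the smaller exit set, because $(P_1\cap Q_1)\setminus(P_2\cap Q_2)\subset(P_1\setminus P_2)\cup(Q_1\setminus Q_2)\subset\operatorname{int}N\cap f^{-1}(\operatorname{int}N)$.
\end{itemize}
This last inclusion is presumably what worried you, but it is harmless. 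With this $T$, your final paragraph goes through unchanged: apply Lemma \ref{lem:inclusion_dominates} twice, then Lemma \ref{lem:dominates_isom} with $g=f$, and take the composite $i_P\circ i_Q^{-1}$.
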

\begin{proof}
Let $T:=P\cap Q$. It follows directly from Definition \ref{def:ind_pair} of
and index pair that $T\in\operatorname{IP}(N,f)$. Since $T\subset P$ and
$T\subset Q $, the proof is completed by an application of Lemma
\ref{lem:inclusion_dominates}.
\end{proof}

We are now ready to give a proof of Theorem \ref{thm:ind_ext_prod}. \\
\begin{proof}
[Proof of Theorem \ref{thm:ind_ext_prod}]Assume first that $M\subset N$.
Combining the second and third condition Definition \ref{def:ind_pair} of an index pair, we observe that $W:=\operatorname{int}M\cap f^{-1}(\operatorname{int}M)$ is a
neigbhorhood of $\operatorname{Inv}M=\operatorname{Inv}N$. Thus by Lemma
\ref{thm:index_pairs_existence} we infer that there exists an index pair
$R=(R_{1},R_{2})\in\operatorname{IP}(N,f)$ satisfying%
\[
R_{1}\backslash R_{2}\subset\operatorname{int}M\cap f^{-1}(\operatorname{int}%
M).
\]

Let $S_{1}:=R_{1}\cap M$ and $S_{2}:=R_{2}\cap M$. Then $S=(S_{1},S_{2}%
)\in\operatorname{IP}(M,f)$ by a direct verification of conditions of Definition
\ref{def:ind_pair}. Moreover, using the assumption that $M\subset N$ we
conclude that $S\subset R$. Since $R_{1}\backslash R_{2}=S_{1}\backslash
S_{2},$ the strong excision property of Alexander--Spanier cohomology (see
Theorem \ref{thm:str_excision}) implies that the inclusion $S\subset R$
induces an isomorphism in cohomology. Therefore conditions
(\ref{eq:def_dominates1}) and (\ref{eq:def_dominates2}) are met and $S \preceq R$. Then Lemma \ref{lem:dominates_isom} that $\operatorname{gim}%
(I_{R}^{\prime})$ and $\operatorname{gim}(I_{S}^{\prime})$ are isomorphic as
left modules over $\operatorname{gim}(\tilde{f}^{\ast}).$

Because $Q,R\in\operatorname{IP}(N,f)$, Lemma implies that
\ref{lem:domination_1} there is an isomorphism $\operatorname{gim}%
(I_{Q}^{\prime})\simeq\operatorname{gim}(I_{R}^{\prime})$ of left $\operatorname{gim}(\tilde{f}^\ast)$-modules. Similarly, there is an isomorphism
$\operatorname{gim}(I_{P}^{\prime})\simeq\operatorname{gim}(I_{S}^{\prime}).$
It follows that $\operatorname{gim}(I_{P}^{\prime})\simeq\operatorname{gim}%
(I_{Q}^{\prime})$ and the case $M\subset N$ is settled.

The general case follows from considering the isolating neighborhood $M\cap
N.$ The proof is now complete.
\end{proof}

If $K$ is an isolated invariant set, we will denote%
\[
C_{\Lambda}\left(  K,f\right)  :=\operatorname{gim}(I_{P}^{\prime})
\]
considered as a left module over $\operatorname{gim}{\tilde{f}^\ast}$ with
multiplication defined by the external product (\ref{eq:def_ext_product}). In
view of Theorem \ref{thm:ind_ext_prod}, the above module structure does not
depend on the choice of an isolating neighborhood $N$ for $K.$

Now we would like to prove that the classical continuation property of the
Conley index extends to our setting. For this purpose, we introduce the
following definition.

\begin{definition}
Assume that $F:\left[  0,1\right]  \times E\rightarrow E$ is a continuous
family of maps. Let $N\subset E$ be an isolating neighborhood with
respect to $F\left(  \alpha,\cdot\right)  $ for all $\alpha\in\left[
0,1\right]  $\thinspace$\ $and let $K_{0}=\operatorname{Inv}\left(  N,F\left(
0,\cdot\right)  \right)  $ and $K_{1}=\operatorname{Inv}\left(  N,F\left(
1,\cdot\right)  \right).$ Then we say that $K_{0}$ and $K_{1}$ are
related by continuation.
\end{definition}
For convenience, we will denote $F_{\alpha}=F\left(  \alpha
,\cdot\right).$ We have the following theorem.
\begin{theorem}
\label{thm:cont_prop} If $K_{0}$ and $K_{1}$ are related by continuation, then
$C_{\Lambda}\left(  K_{0},F_{0}\right)  \simeq C_{\Lambda}\left(  K_{1}%
,F_{1}\right)  .$
\end{theorem}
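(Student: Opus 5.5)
The plan is to follow the classical proof of the continuation property in \cite{Mrozek1990}: work locally in the parameter, then use compactness of $[0,1]$. Throughout we also track the module structure, using Lemma \ref{lem:dominates_isom}. First note that $\tilde F_\alpha^\ast=(\pi\circ F_\alpha|_\Lambda)^\ast$ does not depend on $\alpha$, since the maps $\pi\circ F_\alpha|_\Lambda$ are homotopic through $\alpha$. Hence all the rings $\operatorname{gim}(\tilde F_\alpha^\ast)$ coincide. So it makes sense to compare $C_\Lambda(K_\alpha,F_\alpha)$ for different $\alpha$ as left modules over one fixed ring.

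\textbf{Local step.} Fix $\alpha\in[0,1]$ and put $K_\alpha=\operatorname{Inv}(N,F_\alpha)$. By Lemma \ref{lem:cont_prop_1} there are $P,Q'\in\operatorname{IP}(N,F_\alpha)$ with $P\subset\operatorname{int}_N Q'$ and $P$ related to $Q'$. Lemma \ref{lem:cont_prop_2} then refines this to a pair $Q$ with the same properties. Lemma \ref{lem:cont_prop_3} gives a neighbourhood $U_\alpha$ of $\alpha$ such that for every $\kappa\in U_\alpha$ there is $R(\kappa)\in\operatorname{IP}(N,F_\kappa)$ with $P\subset R(\kappa)\subset Q$. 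Moreover, the inclusions $i:P\to R(\kappa)$ and $j:R(\kappa)\to Q$ induce morphisms in $\mathcal E$.

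The aim is to show that $\mathcal L(i^\ast)$ is an isomorphism, so that $P\preceq R(\kappa)$. For this, consider the composition $i_{P,Q}^\ast=i^\ast\circ j^\ast$. By Lemma \ref{lem:inclusion_isom_ind_pairs} applied to $F_\alpha$, the map $\mathcal L(i_{P,Q}^\ast)$ is an isomorphism. Hence $\mathcal L(i^\ast)$ is surjective and $\mathcal L(j^\ast)$ is injective.

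To get bijectivity, run the same argument with an intermediate pair. Taking a further pair $P\subset Q\subset R$ (using Lemma \ref{lem:cont_prop_2} twice), we get a chain $P\subset R(\kappa)\subset Q\subset R'(\kappa)\subset R$ for $\kappa$ near $\alpha$. The composition $R(\kappa)\subset R'(\kappa)$ is an inclusion of index pairs for the same map $F_\kappa$, so it is an $\mathcal L$-isomorphism by Lemma \ref{lem:inclusion_isom_ind_pairs}. The middle composition $P\subset Q$ is an $\mathcal L$-isomorphism for $F_\alpha$. Combining the two with the standard ``two-out-of-six'' argument on the chain $P\subset R(\kappa)\subset Q\subset R'(\kappa)$, every map in the chain becomes an isomorphism, in particular $\mathcal L(i^\ast)$. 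I expect this bookkeeping to be the main obstacle: checking that Lemma \ref{lem:cont_prop_3} can be applied simultaneously to two nested related pairs with a common neighbourhood $U_\alpha$, which amounts to intersecting the two neighbourhoods.

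\textbf{Module structure.} Once $\mathcal L(i^\ast)$ is an isomorphism, the remaining conditions for $P\preceq R(\kappa)$ are immediate. Condition \eqref{eq:def_dominates0} holds because $i^\ast$ is a morphism in $\mathcal E$. Condition \eqref{eq:def_dominates2} holds trivially since $i$ is an inclusion, so $\pi_{R(\kappa)_1}\circ i=\pi_{P_1}$. Since $F_\alpha\simeq F_\kappa$, Lemma \ref{lem:dominates_isom} yields
\[
C_\Lambda(K_\alpha,F_\alpha)\simeq C_\Lambda(K_\kappa,F_\kappa)\quad\text{for all }\kappa\in U_\alpha
\]
as modules over the common ring. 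This uses Theorem \ref{thm:ind_ext_prod} to ignore the particular choice of index pair.

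\textbf{Global step.} The sets $\{U_\alpha\}$ cover $[0,1]$; choose a finite subcover by compactness. Chain the local isomorphisms along a sequence $0=\alpha_0<\alpha_1<\dots<\alpha_m=1$ in which consecutive points lie in a common $U_{\beta}$. Composing the resulting module isomorphisms gives $C_\Lambda(K_0,F_0)\simeq C_\Lambda(K_1,F_1)$.
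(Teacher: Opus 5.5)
Your proposal is correct and is essentially the paper's proof. The paper also builds nested pairs $P\subset Q\subset R$ from Lemmas \ref{lem:cont_prop_1} and \ref{lem:cont_prop_2}, applies Lemma \ref{lem:cont_prop_3} to both $(P,Q)$ and $(Q,R)$ on $U_1\cap U_2$, runs the same two-out-of-six argument using Lemma \ref{lem:inclusion_isom_ind_pairs} once for $F_\alpha$ and once for $F_\beta$, and then invokes Lemma \ref{lem:dominates_isom}. The only cosmetic differences are that the paper takes $\varphi$ to be the composite inclusion $P\to T(\beta)$ into the outer pair rather than $P\to R(\kappa)$, and it concludes via connectedness of $[0,1]$ rather than a finite chain.
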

\begin{proof}
Our proof is a minor modification of an argument for the Conley index given in
\cite{Mrozek1990}. Nevertheless, we repeat the argument for the convenience of the reader. 

Let $K_{\alpha}:=\operatorname{Inv}\left(  N,F_{\alpha}\right)  .$ Since
$\left[  0,1\right]  $ is connected, it suffices to show that for every
$\alpha\in\left[  0,1\right]  $ there exists a neighborhood $V$ of $\alpha$
such that for all $\beta\in V$ we have $C_{\Lambda}\left(  K_{\alpha
},F_{\alpha}\right)  \simeq$ $C_{\Lambda}\left(  K_{\beta},F_{\beta}\right)
.$

Let us fix $\alpha\in\left[  0,1\right]  $. Using Lemma \ref{lem:cont_prop_1}, we can choose index pairs $P,R\in \operatorname{IP}\left(  N,F_{\alpha
}\right)  $ such that $P\subset\operatorname{int}_{N}R$ and $P$ is related to
$R$. Then Lemma \ref{lem:cont_prop_2} implies that there exists an index pair
$Q\in \operatorname{IP}\left(  N,F_{\alpha}\right)  $ such that $P\subset\operatorname{int}%
_{N}Q,$ $Q\subset\operatorname{int}_{N}R,$ $P$ is related to $Q$ and $Q$ is
related to $R.$

An application of Lemma \ref{lem:cont_prop_3} for $P$ and $Q$ yields a
neighborhood $U_{1}$ of $\alpha$ such that for every $\beta\in U_{1}$ there
exists $S\left(  \beta\right)  \in \operatorname{IP}\left(  K_{\beta},F_{\beta}\right)  $
satisfying $P\subset S\left(  \beta\right)  \subset Q$. Moreover, there are
morphisms%
\begin{align*}
i_{1}^{\ast}  &  :\left(  H^{\ast}\left(  S\left(  \beta\right)  \right)
,I_{S\left(  \beta\right)  }\right)  \rightarrow\left(  H^{\ast}\left(
P\right)  ,I_{P}\right)  ,\\
j_{1}^{\ast}  &  :\left(  H^{\ast}\left(  Q\right)  ,I_{Q}\right)
\rightarrow\left(  H^{\ast}\left(  S\left(  \beta\right)  \right)
,I_{S\left(  \beta\right)  }\right)
\end{align*}
induced by inclusions $i_{1}:P\rightarrow S\left(  \beta\right)  $,
$\ j_{1}:S\left(  \beta\right)  \rightarrow Q.$

Similarly, we can use Lemma \ref{lem:cont_prop_3} for $Q$ and $R$ and obtain a
neighborhood $U_{2}$ of $\alpha$ such that for every $\beta\in U_{2}$ there
exists $T\left(  \beta\right)  \in \operatorname{IP}\left(  K_{\beta},F_{\beta}\right)  $
satisfying $Q\subset T\left(  \beta\right)  \subset R$ and there are morphisms%
\begin{align*}
i_{2}^{\ast}  &  :\left(  H^{\ast}\left(  T\left(  \beta\right)  \right)
,I_{T\left(  \beta\right)  }\right)  \rightarrow\left(  H^{\ast}\left(
Q\right)  ,I_{Q}\right)  ,\\
j_{2}^{\ast}  &  :\left(  H^{\ast}\left(  R\right)  ,I_{R}\right)
\rightarrow\left(  H^{\ast}\left(  T\left(  \beta\right)  \right)
,I_{T\left(  \beta\right)  }\right)  ,
\end{align*}
where $i_{2}:Q\rightarrow T\left(  \beta\right)  $ and $j_{2}:T\left(
\beta\right)  \rightarrow R$ are inclusions.

Choosing $\beta\in U_{1}\cap U_{2},$ we therefore obtain a commutative diagram
of morphisms induced by inclusions%
\begin{equation}%
\begin{array}
[c]{ccc}%
\left(  H^{\ast}\left(  S\left(  \beta\right)  \right)  ,I_{S\left(
\beta\right)  }\right)  & \overset{j_{1}^{\ast}}{\longleftarrow} & \left(
H^{\ast}\left(  Q\right)  ,I_{Q}\right) \\
i_{1}^{\ast}\downarrow\quad &  & \quad\uparrow i_{2}^{\ast}\\
\left(  H^{\ast}\left(  P\right)  ,I_{P}\right)  & \underset{j^{\ast}%
}{\longleftarrow} & \left(  H^{\ast}\left(  T\left(  \beta\right)  \right)
,I_{T\left(  \beta\right)  }\right)  ,
\end{array}
\label{eq:cont_prop_pf_diag}%
\end{equation}
where $j=i_{2}\circ j_{1}\circ i_{1}:P\rightarrow T\left(  \beta\right)  .$
Since $S\left(  \beta\right)  ,T\left(  \beta\right)  \in\operatorname{IP}%
\left(  K_{\beta},F_{\beta}\right)  ,$ Lemma
\ref{lem:inclusion_isom_ind_pairs} implies that $\mathcal{L}\left(
j_{1}^{\ast}\circ i_{2}^{\ast}\right)  =\mathcal{L}\left(  \left(
i\,_{2}\circ j_{1}\right)  ^{\ast}\right)  $ is an isomorphism. The same
reasoning shows that $\mathcal{L}\left(  i_{1}^{\ast}\circ j_{1}^{\ast
}\right)  $ is an isomorphism as well.

Applying the Leray functor to the diagram (\ref{eq:cont_prop_pf_diag}), we see
that $\mathcal{L}\left(  j^\ast\right)  $ is an isomorphism. Letting $\varphi=j $,
we conclude that the conditions (\ref{eq:def_dominates1}) and
(\ref{eq:def_dominates2}) are satisfied (the second one is satisfied because $j$ is a composition of inclusion maps). Therefore $C_{\Lambda}\left(
K_{\alpha},F_{\alpha}\right)  \simeq$ $C_{\Lambda}\left(  K_{\beta},F_{\beta
}\right)  $ in view of Lemma \ref{lem:dominates_isom}, which completes the proof.
\end{proof}


\section{Proof of the main result \label{sec:proof_cont_thm}}

In this section we will give the proof of Theorem \ref{thm:cont_thm}. We will first give the idea of the proof and
then follow with detailed constructions. 

In the
proof we will consider the sets $D^{u}$ and $\partial D^{u}$
defined as
\begin{align}
D^{u}  &  :=\left\{  \left(  \theta;x,y\right)  \in D:y=0,\left\Vert
x\right\Vert _{u}\leq 1\right\}  ,\label{eq:Du-def}\\
\partial D^{u}  &  :=\left\{  \left(  \theta;x,y\right)  \in D:y=0,\left\Vert
x\right\Vert _{u}=1\right\}  \label{eq:Du-bd-def}.%
\end{align}
(In
case $s=0$ we need to modify the above definition and omit $y$ coordinate, but
the proof will be analogous). Observe that the pair $(D^u, \partial D^u)$ is nothing but the Thom space of the bundle $\pi^u: E^u \rightarrow \Lambda$ (see Section \ref{subsec:cohomology_bundles}). 

We start by showing that if $D\overset{f}{\implies}D$, then $\tilde{f}%
^{\ast}:H^{\ast}\left(  \Lambda\right)  \rightarrow H^{\ast}\left(
\Lambda\right)  $ is an isomorphism. If we consider the map $h_{1}$ after the
homotopy (\ref{eq:cov_rel_homotopy}), then trivially $D\overset{h_{1}%
}{\implies}D$, so in particular we will obtain that $\left(  h_{1}\right)
_{\Lambda}^{\ast}:H^{\ast}\left(  \Lambda\right)  \rightarrow H^{\ast}\left(
\Lambda\right)  $ is also an isomorphism.

In the next part of the section we consider the map $h_{1}$, for which we will
construct an index pair $P=\left(  P_{1},P_{2}\right)  \in \operatorname{IP}\left(
D,h_{1}\right)  $ that deformation retracts onto $\left(  D^{u},\partial
D^{u}\right)$. Utilising properties of Thom classes, we will prove that the index map for
$h_{1}$ for this index pair is an
isomorphism. In view of Remark \ref{rem:con_ind_isom}, this will mean that the
Conley index of $\Lambda$ as an invariant set for $h_{1}:D\rightarrow E$ is
equal to $\left(  H^{\ast}\left(  P_{1},P_{2}\right)  ,I_{P}\right)  $.

These facts will imply that for the map $h_{1}$, the
external product%
\[
\ast:H^{\ast}\left(  \Lambda\right)  \times H^{\ast}\left(  P_{1}%
,P_{2}\right)  \rightarrow H^{\ast}\left(  P_{1},P_{2}\right)  ,
\]
is given by%
\[
\alpha\ast v=\pi_{P_{1}}^{\ast}\left(  \alpha\right)  \smile v.
\]
Using this and the fact that $\left(  P_{1},P_{2}\right)  $ is homotopy
equivalent to $\left(  D^{u}, \partial{D}^{u}\right) ,$ we will show, using the Thom isomorphism theorem (see Theorem
\ref{thm:thom_isom})), that for some $p_{T}\in H^{\ast}\left(
P_{1},P_{2}\right)$, the map
\[
H^{\ast}\left(  \Lambda\right)  \ni u\mapsto u\ast p_{T}\in H^{\ast}\left(
P_{1},P_{2}\right)
\]
is an isomorphism.

In the second part of the construction we will consider the map $f$. The above fact together with the
continuation property of the Conley index will imply that for an arbitrary
index pair $R$ for $\operatorname{Inv}(D,f)$ contained in $D$, there exists an $\left[
r_{T}\right]  \in\operatorname{gim}\left(  I_{R}^{\prime}\right)  $ such that
the map%
\begin{equation}
H^{\ast}\left(  \Lambda\right)  \ni \alpha\mapsto \alpha\ast\left[  r_{T}\right]
\in\operatorname{gim}\left(  I_{R}^{\prime}\right)  \label{eq:*-iso-temp}%
\end{equation}
is an isomorphism. Thus $C_\Lambda(\operatorname{Inv}(D,f), f)$ will turn out to be the free left module over $H^\ast(\Lambda)$ with a basis consisting of one element. 
The injectivity of \eqref{eq:cont_thm_hom} will follow from this algebraic fact.

We now carry out this plan in greater detail. The proof will be preceded by a
few preparatory lemmas. We fix a coefficient ring $R$ as in Remark \ref{rem:cont_thm_coeff} or Remark \ref{rem:cont_thm_coeff_2}.

\begin{lemma}
\label{lem:cov_relations_isom} If $D\overset{f}{\Longrightarrow}D,$ then
$\tilde{f}^{\ast}$ is an isomorphism. In particular, $\operatorname{gim}%
(\tilde{f}^{\ast})\simeq H^{\ast}\left(  \Lambda\right)  .$
\end{lemma}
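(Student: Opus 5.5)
The argument is a homotopy computation. We show that $\tilde f$ is homotopic to $\eta$, and then invoke Proposition \ref{prop:degree}.

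Recall that $\tilde f=\pi\circ f_\Lambda$, where $\Lambda\subset D$ is identified with the zero section $\{(\theta;0,0)\}$. The homotopy $h$ from Definition \ref{def:cov_relations} is defined on $[0,1]\times D$, which contains $[0,1]\times\Lambda$. We therefore set
\[
G:[0,1]\times\Lambda\rightarrow\Lambda,\qquad G(t,\theta):=\pi\bigl(h(t,(\theta;0,0))\bigr).
\]
This map is continuous, since $h$ and $\pi$ are. Its endpoints are
\[
G(0,\cdot)=\pi\circ f_\Lambda=\tilde f,\qquad G(1,\theta)=\pi\bigl(\eta(\theta);A_\theta 0,0\bigr)=\eta(\theta).
\]
When $u=0$, case (b) of the definition gives the same conclusion, $G(1,\theta)=\eta(\theta)$. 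Hence $\tilde f\simeq\eta$ as maps $\Lambda\rightarrow\Lambda$, and by the homotopy axiom for Alexander--Spanier cohomology, $\tilde f^\ast=\eta^\ast$ on $H^\ast(\Lambda)$.

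By hypothesis, $\deg\eta\neq 0$ in the orientable case and $\deg_2\eta\neq0$ otherwise. Proposition \ref{prop:degree} then says that $\eta^\ast$ is an isomorphism on cohomology, for the coefficient ring fixed above. That ring is a field in which $\deg\eta$ is invertible: $\mathbb Q$ in the main theorem, or $\mathbb Z_2$ in the non-orientable case. One caveat: Proposition \ref{prop:degree} is stated for fields of non-zero characteristic, but its proof only uses that $d$ is invertible in the field, which holds for $\mathbb Q$ because $\deg\eta\neq 0$. This is the one point that needs a brief check. In the homeomorphism case of Remark \ref{rem:cont_thm_coeff}, we would instead argue directly that $\tilde f^\ast$ is invertible.

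It follows that $\tilde f^\ast$ is an isomorphism. For the final claim, every power $\tilde f^{\ast m}$ is surjective, so $\operatorname{gim}(\tilde f^\ast)=\bigcap_m \tilde f^{\ast m}(H^\ast(\Lambda))=H^\ast(\Lambda)$.
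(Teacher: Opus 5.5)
Your proof is correct and follows the paper's route: restrict the covering homotopy $h$ to the zero section to get $\tilde f\simeq\eta$, then apply Proposition \ref{prop:degree}. The paper also writes out the direct argument for the homeomorphism case, $\pi^\ast\circ\tilde f^\ast=f^\ast\circ\pi^\ast$ with $\pi^\ast$ and $f^\ast$ invertible, which you only allude to; your caveat is also well taken, since the proof of Proposition \ref{prop:degree} needs $\deg\eta$ to be invertible in the coefficient field, which holds over $\mathbb{Q}$ but is not guaranteed by the stated hypothesis ``non-zero characteristic''.
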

\begin{proof}
If $f$ is a homeomorphism, $\tilde{f}^*$ is an isomorphism, since $\pi^*\circ\tilde{f}^* = f^* \circ \pi^*$ in view of Proposition \ref{lem:admissibility} and $\pi$ induces an isomorphism on cohomology (indeed, $i_{\Lambda,E}$ is its inverse up to a homotopy). Hence the assertion follows. If $f$ is merely continuous, we reason as follows. Note that if we restrict the homotopy (\ref{eq:cov_rel_homotopy}) from the
covering relations to $\Lambda,$ we can infer that the maps $\tilde{f}$ and
$\eta$ are homotopic. Hence
\[
\tilde{f}^{\ast}=\eta^{\ast},
\]
so it suffices to show that the induced map $\eta^{\ast}$ is an isomorphism. We can use Proposition
\ref{prop:degree} and the degree condition from the covering relations to obtain that
$\eta^{\ast}$ is an isomorphism on cohomology with the choice of coefficients as in Remark \ref{rem:cont_thm_coeff} and Remark \ref{rem:cont_thm_coeff_2}. This
completes the proof.
\end{proof}

Since for the map $h_{1}$ after the homotopy \eqref{eq:cov_rel_homotopy} we have
$D\overset{h_{1}}{\implies}D$ (the homotopy needed for this covering is the
identity), from Lemma \ref{lem:cov_relations_isom} we obtain the following corollary.
\begin{corollary}
 The map $\tilde{h}
_{1}^{\ast}:H^{\ast}\left(  \Lambda\right)  \rightarrow
H^{\ast}\left(  \Lambda\right)$ is an isomorphism.
\end{corollary}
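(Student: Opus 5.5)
The plan is to observe that $h_1$ itself satisfies the covering relation $D\overset{h_1}{\Longrightarrow}D$, and then apply Lemma \ref{lem:cov_relations_isom} to the map $h_1$ instead of $f$. The lemma then yields that $\tilde{h}_1^{\ast}=(\pi\circ (h_1)_\Lambda)^\ast$ is an isomorphism.

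To verify the covering relation, we use the constant homotopy $\bar h(t,\cdot):=h_1$ for $t\in[0,1]$, together with the same map $\eta$ and the same linear maps $A_\theta$ as for $f$.

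\begin{itemize}
\item The condition $\bar h(0,\cdot)=h_1$ holds trivially.
\item The conditions $\bar h([0,1]\times D^-)\cap D=\emptyset$ and $\bar h([0,1]\times D)\cap D^+=\emptyset$ reduce to $h(1,D^-)\cap D=\emptyset$ and $h(1,D)\cap D^+=\emptyset$. These are special cases ($t=1$) of the corresponding conditions on the original homotopy $h$.
\item The degree condition on $\eta$ and the expansion of $A_\theta$ are unchanged.
\item The formula $\bar h(1,(\theta;x,y))=(\eta(\theta);A_\theta x,0)$ (or $(\eta(\theta);0)$ when $u=0$) holds because $\bar h(1,\cdot)=h(1,\cdot)$.
\end{itemize}

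Hence $D\overset{h_1}{\Longrightarrow}D$, and Lemma \ref{lem:cov_relations_isom} applies.

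A more direct alternative bypasses the lemma's framework. Write $h_1|_\Lambda(\theta)=(\eta(\theta);A_\theta 0,0)=(\eta(\theta);0,0)$, so that $\tilde h_1=\eta$ exactly, not merely up to homotopy. Then Proposition \ref{prop:degree} together with the degree assumption on $\eta$ gives that $\eta^\ast$ is an isomorphism, for the coefficient choices in Remarks \ref{rem:cont_thm_coeff} and \ref{rem:cont_thm_coeff_2}.

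There is no real obstacle. The only point needing care is that the paper's notation $\tilde f$ is defined for maps $E\to E$, whereas $h_1$ is defined only on $D$. However, $\Lambda$ (the zero section) lies in $D$, so $\tilde h_1=\pi\circ (h_1)_\Lambda$ is still well defined. The proof of Lemma \ref{lem:cov_relations_isom} only uses the restriction to $\Lambda$ and Proposition \ref{prop:degree}, so it applies verbatim.
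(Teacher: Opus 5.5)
Your proposal is correct and matches the paper's argument: the paper also notes that $D\overset{h_1}{\Longrightarrow}D$ holds with the constant (identity) homotopy, and then applies Lemma~\ref{lem:cov_relations_isom}. Your alternative remark is equally valid. Since $\tilde h_1=\eta$ exactly, Proposition~\ref{prop:degree} applies directly, and this is just the core of that lemma's proof specialized to $h_1$.
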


Recall that a pair $(Y,B)\subset\left(  X,A\right)  $ is a \emph{deformation
retract} of $\left(  X,A\right)  $ if there exists a homotopy $H:\left[
0,1\right]  \times X\rightarrow X$ such that%
\begin{align*}
H\left(  0,\cdot\right)   &  =id_{X},\qquad H\left(  1,X\right)  \subset
Y,\qquad H\left(  1,A\right)  \subset B,\\
H\left(  t,\cdot\right)  |_{Y}  &  =id_{Y}\text{\quad and\quad}H\left(
t,A\right)  \subset A\qquad\text{for all }t\in\left[  0,1\right]  .
\end{align*}

\begin{lemma}
\label{lem:cov_relations_ind_pair} If $D\overset{f}{\Longrightarrow}D$, then
there exists an index pair $P=\left(  P_{1},P_{2}\right)  \in\operatorname{IP}%
(D,h_{1})$ such that $\left(  D^{u},\partial
D^{u}\right)  $ is a deformation retract of $P.$
\end{lemma}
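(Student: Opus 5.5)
The plan is to write down an explicit index pair for $h_1$, using the fact that $h_1(\theta;x,y)=(\eta(\theta);A_\theta x,0)$ forgets $y$ and expands $x$ uniformly. I treat $u>0$; for $u=0$ the same construction works with the $x$-coordinate dropped and $P_2=\emptyset$.

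\textbf{Step 1: uniform expansion and the invariant set.} Since $h_1$ is continuous and the unit sphere bundle $\partial D^u$ is compact, the quantities $\min_{\partial D^u}\Vert A_\theta x\Vert_u$ and $\max_{\partial D^u}\Vert A_\theta x\Vert_u$ are attained. By \eqref{eq:cov_lin_isom} and linearity this gives constants $1<c\le C$ with
\[
c\Vert x\Vert_u\le\Vert A_\theta x\Vert_u\le C\Vert x\Vert_u .
\]
Consequently a point of $D$ stays in $D$ for all forward iterates only if $x=0$. Every backward orbit after one step has $y=0$. Hence $\operatorname{Inv}(D,h_1)$ is the zero section $\{x=0,\,y=0\}$, which lies in $\operatorname{int}D$, so $D$ is an isolating neighbourhood.

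\textbf{Step 2: the candidate pair and the index pair conditions.} I take
\[
P_1=\{(\theta;x,y):\Vert x\Vert_u\le 1,\ \Vert y\Vert_s\le \tfrac12\},
\qquad
P_2=\{(\theta;x,y)\in P_1:\Vert A_\theta x\Vert_u\ge 1\}.
\]
Both sets are compact because $(\theta,x)\mapsto \Vert A_\theta x\Vert_u$ is continuous, being read off from $h_1$. I then check Definition~\ref{def:ind_pair} item by item.
\begin{itemize}
\item[(1)] $P_1$ is positively invariant, since $h_1(P_1)\cap D$ has $y=0$. $P_2$ is positively invariant, since for $p\in P_2$ we get $\Vert A_\theta x\Vert_u\ge1$, and if $h_1(p)\in D$ this forces $\Vert A_\theta x\Vert_u=1$ and then $\Vert A A_\theta x\Vert_u\ge c>1$, so $h_1(p)\in P_2$.
\item[(2)] $\operatorname{Inv}^-(D,h_1)\subset\{y=0\}\subset\operatorname{int}_D P_1$, and $\operatorname{Inv}^+(D,h_1)\subset\{x=0\}$, which is disjoint from $P_2$.
\item[(3)] On $P_1\setminus P_2$ we have $\Vert x\Vert_u\le c^{-1}<1$ and $\Vert y\Vert_s\le\frac12$, so the point lies in $\operatorname{int}D$. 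Also $\Vert A_\theta x\Vert_u<1$, so its image lies in $\operatorname{int}D$.
\end{itemize}

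\textbf{Step 3: the deformation (the main obstacle).} The easy part is to first contract $y$ linearly to $0$ via $(\theta;x,y)\mapsto(\theta;x,(1-t)y)$. This preserves $P_1$, $P_2$ and $D^u$, and reduces $(P_1,P_2)$ to $(D^u,\{y=0,\ \Vert A_\theta x\Vert_u\ge1\})$.

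The difficulty is that this second set contains points with $\Vert x\Vert_u<1$. It is tempting to take $P_2=\{\Vert x\Vert_u=1\}$ instead, but that choice violates item (3) near the sphere. I would handle this by a fibrewise radial homotopy inside each $B^u_\theta$. In each direction $x/\Vert x\Vert_u$ it pushes the radial segment $[r_\theta(x),1]$, where $r_\theta(x)$ is the radius at which $\Vert A_\theta\cdot\Vert_u=1$ (continuous in $(\theta,x/\Vert x\Vert_u)$), onto the sphere, and stretches $[0,r_\theta]$ onto $[0,1]$. This is the homotopy used to identify $(P_1,P_2)$ with $(D^u,\partial D^u)$.

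If the fixed-point requirement of a deformation retract on $D^u$ cannot be met literally, the fallback is different. I would instead show that $(P_1,P_2)$ and $(D^u,\partial D^u)$ are related by the inclusion $(D^u,\partial D^u)\subset(P_1,\{y=0\}\cup P_2)$ together with the above homotopy equivalence of pairs. That yields the isomorphism on cohomology and the homotopy $\pi\circ H\simeq\pi$, which is all the subsequent Thom-class argument needs.
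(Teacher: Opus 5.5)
\textbf{Step 2 is fine; Step 3 does not prove the lemma as stated, and your suspicion there is justified.} Your index pair is valid: with $P_2=\{\Vert A_\theta x\Vert_u\ge 1\}$ every check in Step 2 goes through. The trouble is that, with $D^u,\partial D^u$ of radius $1$, \emph{no} index pair at all has $(D^u,\partial D^u)$ as a deformation retract in the sense defined before the lemma, where $H(t,\cdot)$ must fix $D^u$ pointwise.

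To see this, take $p=(\theta;x,0)$ with $\Vert x\Vert_u<1\le\Vert A_\theta x\Vert_u$; such points exist because $A_\theta$ expands. Then $p\in D^u\subset P_1$, so $H(t,p)=p$ for all $t$.
\begin{itemize}
\item If $p\in P_2$, this contradicts $H(1,P_2)\subset\partial D^u$.
\item If $p\notin P_2$, then $p\in P_1\setminus P_2$ while $h_1(p)\notin\operatorname{int}D$, contradicting item (3) of Definition~\ref{def:ind_pair}.
\end{itemize}
The missing idea is to shrink the core. The paper takes $P_2=\{\Vert x\Vert_u\ge C\}$ with $C\in(0,1)$ so small that $\Vert A_\theta x\Vert_u<1$ whenever $\Vert x\Vert_u<C$. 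It then \emph{redefines} $D^u,\partial D^u$ as the disc and sphere bundles of radius $C$; fibrewise scaling identifies these with the Thom space, so nothing downstream changes. With this choice the explicit formula $H(t,(\theta;x,y))=(\theta;((1-t)+tC\Vert x\Vert_u^{-1})x,(1-t)y)$ for $\Vert x\Vert_u\ge C$, and $(\theta;x,(1-t)y)$ otherwise, is an honest deformation retraction.

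\textbf{Repairing your own choice of exit set.} Your inner boundary $\{\Vert A_\theta x\Vert_u=1\}$ is not a round sphere bundle. A round core of radius $\rho$ would need $1/\min_{\Vert\omega\Vert_u=1}\Vert A_\theta\omega\Vert_u\le\rho\le 1/\max_{\Vert\omega\Vert_u=1}\Vert A_\theta\omega\Vert_u$, which fails in general, so rescaling alone does not help. The analogous fix is to retract onto the pair $(\{y=0,\ \Vert A_\theta x\Vert_u\le 1\},\{y=0,\ \Vert A_\theta x\Vert_u=1\})$. Use $x\mapsto((1-t)+t\Vert A_\theta x\Vert_u^{-1})x$ on $\{\Vert A_\theta x\Vert_u\ge1\}$ and the identity elsewhere, combined with contracting $y$. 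Then observe that this pair is fibrewise homeomorphic to the Thom space of $E^u$ via $x\mapsto\Vert A_\theta x\Vert_u\,x/\Vert x\Vert_u$.

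\textbf{Your fallback and a typo.} Your fallback, a fibrewise homotopy equivalence of pairs, does supply what Lemma~\ref{lem:cov_relations_con_ind} actually uses: $r^\ast$ is an isomorphism and $\pi_{D^u}\circ r=\pi_{P_1}$. It is, however, weaker than the stated lemma. Also, the intermediate pair in your last paragraph should presumably be $(D^u,\{y=0\}\cap P_2)$; as written, $(P_1,\{y=0\}\cup P_2)$ has trivial relative cohomology.
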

\begin{proof}
We will tackle the case of $s\geq1,$ i. e. the stable bundle is nonempty.
The proof in the case $s=0$ is similar.

Observe that $\Lambda=\mathrm{Inv}\left(  D,h_{1}\right)  $ and therefore we
can take the isolating neighbourhood to be $D$. Due to the compactness of
$\Lambda$ and the fact that $h_{1}$ is fibrewise linear, we can choose a
constant $C\in\left(  0,1\right)  $ such that for
\begin{align*}
P_{1}  &  =\left\{  \left(  \theta;x,y\right)  \in D:\left\Vert y\right\Vert
_{s}\leq1/2\right\}  ,\\
P_{2}  &  =\left\{  \left(  \theta;x,y\right)  \in P_{1}:\left\Vert
x\right\Vert _{u}\geq C\right\}  ,
\end{align*}
the pair $\left(  P_{1},P_{2}\right)  $ is an index pair for $h_{1}$ with
respect to $D$.

Consider now $D^{u}$ and $\partial D^{u}$ from (\ref{eq:Du-def}) and
(\ref{eq:Du-bd-def}), respectively, to be defined using the above $C$. We can
take the homotopy $H:\left[  0,1\right]  \times P_{1}\rightarrow P_{1}$ for
the deformation retract as%
\[
H\left(  t,\left(  \theta;x,y\right)  \right)  =\left\{
\begin{array}
[c]{ll}%
\left(  \theta;x,\left(  1-t\right)  y\right)  & \qquad\text{if }\left\Vert
x\right\Vert _{u}\leq C\\
\left(  \theta;\left(  \left(  1-t\right)  +tC\left\Vert x\right\Vert
_{u}^{-1}\right)  x,\left(  1-t\right)  y\right)  & \qquad\text{otherwise.}%
\end{array}
\right.
\]
This concludes our proof.
\end{proof}

\begin{lemma}
\label{lem:cov_relations_con_ind} Assume that $P=\left(  P_{1},P_{2}\right)
\in\operatorname{IP}(D,h_{1})$ is the index pair from Lemma
\ref{lem:cov_relations_ind_pair}. Then there exists a cohomology class
$p_{T}\in H^{\ast}\left(  P_{1},P_{2}\right)  $ such that the map $$F:H^{\ast}\left(
\Lambda\right)  \rightarrow H^{\ast+u}\left(  P_{1},P_{2}\right)$$ defined
by
\begin{equation}
F\left(  \alpha\right)  =\pi_{P_1}^{\ast}(\alpha)\smile p_{T}\label{eq:lem_cov_relations_con_ind_isom}
\end{equation}
is an isomorphism. Moreover, the index map $I_{P}$ for $h_{1}$ is an
isomorphism and therefore $\operatorname{gim}\left(  I_{P}^{\prime}\right)
\simeq H^{\ast}\left(  P_{1},P_{2}\right)  .$
\end{lemma}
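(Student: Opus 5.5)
The plan is to take $p_T$ to be the Thom class transported to $P$ along the deformation retraction of Lemma \ref{lem:cov_relations_ind_pair}, and to check the index map on this single class. By Lemma \ref{lem:cov_relations_ind_pair}, the inclusion $j:=i_{(D^u,\partial D^u),P}$ is a homotopy equivalence of pairs; its homotopy inverse is $r:=H(1,\cdot)$. Hence $j^{\ast}:H^{\ast}(P_1,P_2)\to H^{\ast}(D^u,\partial D^u)$ is an isomorphism. Let $v_T\in H^{u}(D^u,\partial D^u)$ be the Thom class of $E^u$ from Theorem \ref{thm:thom_isom}, and put $p_T:=(j^{\ast})^{-1}(v_T)=r^{\ast}(v_T)$.

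\emph{$F$ is an isomorphism.} Since $\pi_{P_1}\circ j=\pi_{D^u}$, naturality of the relative cup product gives
\[
j^{\ast}F(\alpha)=\pi_{D^u}^{\ast}(\alpha)\smile v_T .
\]
The right-hand side is the Thom isomorphism. So $F=(j^{\ast})^{-1}\circ(\text{Thom map})$ is a composition of isomorphisms.

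\emph{$I_P$ is an isomorphism.} Apply formula \eqref{eq:pf_ext_prod_well_defined} with $k=1$ to the map $h_1$:
\[
I_P(F(\alpha))=\pi_{P_1}^{\ast}\bigl(\tilde h_1^{\ast}\alpha\bigr)\smile I_P(p_T).
\]
Since $I_P(p_T)\in H^{u}(P_1,P_2)$ and $F$ is onto, we can write $I_P(p_T)=F(\beta)$ for some $\beta\in H^0(\Lambda)$. As $\Lambda$ is connected, $\beta=c\cdot 1$ for a scalar $c$. To identify $c$, restrict to a single fibre $(D^u_\theta,\partial D^u_\theta)$.
\begin{itemize}
\item On $P_1\cap E_\theta$ the map $h_1$ is $(\theta;x,y)\mapsto(\eta(\theta);A_\theta x,0)$.
\item $A_\theta$ is a linear isomorphism: the expansion condition \eqref{eq:cov_lin_isom} forces injectivity, and the ranks are equal.
\item After the identification $i_{P,T(P)}^{\ast}$ and strong excision (Theorem \ref{thm:str_excision}), the fibre component of $I_P$ becomes the map $H^u(D^u_{\eta(\theta)},\partial D^u_{\eta(\theta)})\to H^u(D^u_\theta,\partial D^u_\theta)$ induced by $A_\theta$, viewed as a map of one-point compactifications of the fibres.
\item By Proposition \ref{thm:lin_isom_degree}, this map sends a generator to $\operatorname{sign}\det A_\theta$ times a generator, where orientations come from the oriented bundle $E^u$.
\item By Theorem \ref{thm:thom_isom}, $v_T$ restricts to a generator on every fibre, so $c=\operatorname{sign}\det A_\theta=\pm1$. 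This is a unit, and it is independent of $\theta$ by continuity and connectedness.
\end{itemize}
Therefore
\[
I_P\circ F=\pm F\circ \tilde h_1^{\ast}.
\]
Here $F$ is an isomorphism, and $\tilde h_1^{\ast}$ is an isomorphism by the Corollary following Lemma \ref{lem:cov_relations_isom}. Hence $I_P$ is an isomorphism. Then $\operatorname{gker}(I_P)=0$ and $\operatorname{gim}(I_P')=H^{\ast}(P_1,P_2)$, in agreement with Remark \ref{rem:con_ind_isom}.

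The main obstacle is the fibrewise computation of $I_P$. The difficulty is that $h_1(P)$ lands in $T(P)=P\cup(E\setminus\operatorname{int}D)$ and not in $P$, so we cannot simply compose with $r$. The approach is to use that $h_1$ maps $P_2$ into $E\setminus\operatorname{int}D$, because $\|A_\theta x\|>1$ once $C$ is close to $1$. Thus $h_1^{\ast}$ factors through the pair $\bigl(E,E\setminus\operatorname{int}D\bigr)$ restricted over the image. Together with $i_{P,T(P)}^{\ast}$, this pair is excisively identified with $(D^u,\partial D^u)$ up to homotopy. In that form, the fibre map is exactly $A_\theta$ on $(B^u,\partial B^u)\simeq(\mathbb R^u,\mathbb R^u\setminus\operatorname{int}B^u)$, and the result follows.
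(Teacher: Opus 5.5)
Your construction of $p_T=r^{\ast}(v_T)$, your proof that $F$ is an isomorphism, and your reduction $I_P\circ F=\beta\,F\circ\tilde h_1^{\ast}$ with $I_P(p_T)=\beta p_T$ and $\beta=\pm1$ from Proposition \ref{thm:lin_isom_degree} follow the paper's argument. The one structural difference is that you treat $s>0$ directly on fibres. The paper instead proves $s=0$ first and then transfers the result through the projection $\pi^u$ and the pairs $S(P)$.

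\textbf{The gap} is in how your last paragraph justifies the fibrewise step. For the index pair of Lemma \ref{lem:cov_relations_ind_pair} you are not free to take $C$ close to $1$.
\begin{itemize}
\item Condition 3 of Definition \ref{def:ind_pair} with $N=D$ requires $h_1(P_1\setminus P_2)\subset\operatorname{int}D$. That is, $\|A_\theta x\|_u<1$ whenever $\|x\|_u<C$, so $C\,\|A_\theta\|\le 1$ for all $\theta$.
\item Your inclusion $h_1(P_2)\subset E\setminus\operatorname{int}D$ needs $\|A_\theta x\|_u\ge 1$ whenever $C\le\|x\|_u\le 1$. That is, $C\min_{\|x\|_u=1}\|A_\theta x\|_u\ge 1$.
\item Together these force every $CA_\theta$ to be an isometry. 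For $A_\theta=\operatorname{diag}(2,3)$ with the Euclidean norm they read $C\le 1/3$ and $C\ge 1/2$.
\end{itemize}
So the factorization through $(E,E\setminus\operatorname{int}D)$, i.e.\ through the radius-$1$ pair $(\mathbb R^u,\mathbb R^u\setminus\operatorname{int}B^u)$, is not available in general.

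\textbf{The repair} is to work at radius $C$ instead of radius $1$; then no extra condition on $C$ is needed.
\begin{itemize}
\item By linearity, \eqref{eq:cov_lin_isom} gives $\|A_\theta x\|_u>\|x\|_u$ for $x\neq 0$.
\item Hence the fibre map $(P_{1,\theta},P_{2,\theta})\to T(P)_{\eta(\theta)}$ factors as $(x,y)\mapsto A_\theta x$ into $(E^u_{\eta(\theta)},\{\|x\|_u\ge C\})$, followed by $\iota:x\mapsto(x,0)$ into $T(P)_{\eta(\theta)}$.
\item $\iota$ is a map of pairs because $\{y=0,\ \|x\|_u\le 1\}\subset P_1$, $\{y=0,\ C\le\|x\|_u\le1\}\subset P_2$ and $\{\|x\|_u\ge 1\}\subset E\setminus\operatorname{int}D$.
\item The inclusion $\kappa:(D^u_{\eta(\theta)},\partial D^u_{\eta(\theta)})\hookrightarrow(E^u_{\eta(\theta)},\{\|x\|_u\ge C\})$ is a homotopy equivalence of pairs. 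Moreover, $\iota\circ\kappa$ factors through $P_{\eta(\theta)}$.
\item By naturality, $\kappa^{\ast}\iota^{\ast}$ sends the restriction of $i_{P,T(P)}^{\ast-1}(p_T)$ to $v_T|_{\eta(\theta)}$, which is a generator.
\item Proposition \ref{thm:lin_isom_degree} then gives $\pm p_T|_\theta$, whatever admissible $C$ was chosen.
\end{itemize}
This is exactly the pair the paper uses when $s=0$, where $T(P)=(E,\{\|x\|_u\ge C\})$ retracts onto $(D^u,\partial D^u)$. With this change your direct fibrewise treatment of $s>0$ is valid, and it spares you the $\pi^u$/$S(P)$ diagram.
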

\begin{proof}
Let $H:\left[  0,1\right]  \times P_{1}\rightarrow P_{1}$ be the homotopy from
the proof of Lemma \ref{lem:cov_relations_ind_pair}, required for $\left(
D^{u},\partial D^{u}\right)  $ to be the deformation retract of $\left(
P_{1},P_{2}\right)  $. Let 
$r  :=H\left(  1,\cdot\right)$ and observe that
\begin{equation}
\pi_{D^{u}}\circ r=\pi_{P_{1}} \label{eq:lem_cov_rel_ind_pair_1}.%
\end{equation}

Take an arbitrary $p\in H^{\ast}\left(  P_{1},P_{2}\right)  .$ Since $r$ is a
deformation retraction, $r^{\ast}$ is an isomorphism. Therefore there exists a unique
$q\in H^{\ast}\left(  D^{u},\partial D^{u}\right)  $ satisfying
\[
p=r^{\ast}\left(  q\right)
\]
By the Thom isomorphism theorem (see Theorem \ref{thm:thom_isom}), we can
write $q$ as%
\[
q=\pi_{D^{u}}^{\ast}\left(  \alpha\right)  \smile v_{T},
\]
where $v_{T}\in H^{\ast}\left(  D^{u},\partial D^{u}\right)  $ is the Thom
class and $\alpha\in H^{\ast}\left(  \Lambda\right)  $ is uniquely determined by $q$. Therefore
\begin{align*}
p  &  =r^{\ast}\left(  q\right) =r^{\ast}\left(  \pi_{D^{u}}^{\ast}\left(  \alpha\right)  \smile v_{T}\right)
=\left(  \pi_{D^{u}}\circ r\right)  ^{\ast}\left(  \alpha\right)  \smile
r^{\ast}\left(  v_{T}\right) =\pi_{P_{1}}^{\ast}\left(  \alpha\right)  \smile r^{\ast}\left(  v_{T}\right),
\end{align*}
in view of \eqref{eq:lem_cov_rel_ind_pair_1}.%

Letting
\begin{equation}
p_{T}=r^{\ast}\left(  v_{T}\right)  , \label{eq:lem_cov_rel_ind_pair_6}%
\end{equation}
we finally see that each $p\in H^{\ast}\left(  P_{1},P_{2}\right)  $ can be
uniquely represented as%
\begin{equation}
p=\pi_{P_{1}}^{\ast}\left(  u\right)  \smile p_{T},
\label{eq:lem_cov_rel_ind_pair_2}%
\end{equation}
which proves that \eqref{eq:lem_cov_relations_con_ind_isom} is an isomorphism.

We will now prove that $I_{P}$ for $h_{1}$ is an isomorphism. We will first
consider the case when $s=0$, i. e. there is no stable direction and the map
$h_{1}:E\rightarrow E$ is given by%
\[
h_{1}\left(  \theta;x\right)  =\left(  \eta\left(  \theta\right)  ;A_{\theta
}x\right)  .
\]

Denote $$P_{i,\theta} = P_i \cap E_\theta,$$ $$T(P_i)_\theta = T(P_i) \cap E_\theta$$ for $i = 1,2$, $P_\theta = (P_{1,\theta},P_{2,\theta})$ and $T(P)_\theta = (T(P_1)_\theta,T(P_2)_\theta).$ Note that it follows from the formula \eqref{eq:lem_cov_rel_ind_pair_2} that every $p \in H^u(P_1,P_2)$ is some multiple of $p_T$. Thus
$I_P(p_T) = \beta p_T,$
where $\beta \in R$, the coefficient ring. We will first prove that $\beta = \pm1$. 

Since the Thom class $v_T$ of $E^u$ restricts to the generator $$v_T\vert_\theta = i_{(D^u_\theta,\partial D^u_\theta), (D^u, \partial D^u)}^\ast \circ v_T$$ of $H^u(D^u_\theta, \partial D^u_\theta)$ for every $\theta \in \Lambda$ and the retraction $r$ preserves fibers, so does $p_T$ restrict to the generator of $H^u(P_{1,\theta},P_{2,\theta})$. We now use the assumption that $s = 0$. Observe that in this case the pair $(T(P_1), T(P_2))$ deformation retracts onto $(D^u, \partial D^u)$, which can be proven in a similar way as Lemma \ref{lem:cov_relations_ind_pair}. Thus we also have an isomorphism $H^0(\Lambda) \simeq H^u(T(P_1), T(P_2))$ and the generator $p'_T \in H^u(T(P_1), T(P_2))$ restricting to the generator of $H^u(T(P_1)_{\theta},T(P_2)_{\theta})$ for every $\theta \in \Lambda$.

It follows that we have
$$(h_1)^\ast_{P,T(P)}(p_T') = \beta' p_T,$$
for some $\beta' \in R.$
Then, using the fact that $h_1$ preserves fibers, we obtain 
\begin{align*}
\left(h_1\right)^*_{P_\theta,T(P)_\theta}\left(p'_T\vert_{\eta(\theta)}\right) &= \left(\left(h_1\right)^*_{P_\theta,T(P)_\theta}  \circ i_{P_{\eta(\theta)}, T(P)_{\eta(\theta)}}^\ast\right)(p'_T) \\ &= \left(i_{P_{\eta(\theta)}, T(P)_{\eta(\theta)}} \circ \left(h_1\right)_{P_\theta,T(P)_\theta}\right)^\ast(p'_T) \\
&= \left(\left(h_1\right)_{P,T(P)} \circ i_{P_{\theta}, T(P)_{\theta}}\right)^\ast(p_T') \\ &= i_{{P_\theta},T(P)_\theta}^\ast\left(\left(h_1\right)^\ast_{P,T(P)}(p'_T)\right) = \beta'p_T\vert_{\theta}.
\end{align*}

Since $P_{1,\theta}/P_{2,\theta}$ and $T\left(P_{1}\right)_{\eta(\theta)}/T\left(
P_{2}\right)_{\eta(\theta)}$ are both homeomorphic to the unit sphere, from Proposition \ref{thm:lin_isom_degree} we deduce that$$\left(h_1\right)^*_{P_\theta,T(P)_\theta}\left(p'_T\vert_{\eta(\theta)}\right) = \left(\det A_{\theta}\right)p_T\vert_{\theta} =\pm p_T\vert_{\theta}.$$ Hence $\beta' = \pm1$ and consequently $\beta = \pm1.$

Using the formula \eqref{eq:pf_ext_prod_well_defined}, for $\alpha\in H^{\ast}(\Lambda)$ we have
\begin{align*}
I_{P}\left(\pi_{P_1}^{\ast}(\alpha)  \smile p_{T}\right) = \pi^*_{P_1}(\eta^*(\alpha)) \smile I_P(p_T) = r\left(\pi^*_{P_1}(\eta^*(\alpha)) \smile p_T \right)
\end{align*}
and therefore%
\[
I_{P}=\beta F\circ\eta^{\ast}\circ F^{-1}.
\]
Since $\beta $ is invertible by our choice of coefficients, it follows that $I_{P}$ is an isomorphism.

We will now tackle the case when $s>0.$ Let $$P'=\left(  P'%
_{1},P_2'\right)  =\left(  \pi^{u}\left(  P_{1}\right)  ,\pi
^{u}\left(  P_{2}\right)  \right),$$ where $\pi^{u}:E\rightarrow E^{u}$ is
the projection onto the bundle $E^{u}.$ Clearly the pair $\left( 
{P}'_{1},{P}'_{2}\right)  $ is a deformation retract of $\left(
P_{1},P_{2}\right),$ so%
\[
\pi^{u\ast}:H^{\ast}\left(  \tilde{P}_{1},\tilde{P}_{2}\right)  \rightarrow
H^{\ast}\left(  P_{1},P_{2}\right)
\]
is an isomorphism. If we let ${h}'_{1}:E^{u}\rightarrow E^{u}$ be the
restriction of $h_{1}$ to $E^{u},$we conclude that $\left(  P'
_{1},{P}'_{2}\right)  $ is an index pair for ${h}'_{1}$ with respect
to $D^{u}.$ Moreover, by the previous case the index map for this index pair
\[
I_{{P}'}:H^{\ast}\left(  {P}'_{1},{P}'_{2}\right)  \rightarrow
H^{\ast}\left(  {P}'_{1},{P}'_{2}\right)
\]
is an isomorphism.

Observe that the index map satisfies $$I_P = f^*_{P,S(P)}\circ i_{P,S(P)}^{\ast-1},$$ where $S(P) = \left(S(P_1), S(P_2)\right)$ was defined in Section \ref{subsubsec:con_ind_def}. Applying the cohomology functor to the commutative diagram%
\[%
\begin{array}
[c]{lllll}%
\left(  P_1,P_2\right)  & \overset{\left(  {h}%
_{1}\right)_{P,S\left(  P\right)}}{\longrightarrow} & \left(  S(P%
_{1}),S(P_2)\right)  & \overset{i_{P,S(P)}%
}{\longleftarrow} & \left( P_1,P_2\right) \\
\quad\downarrow\pi^{u} &  & \quad\quad\downarrow \pi^u&  & \quad\downarrow\pi^{u}\\
\left(  P'_{1},P'_{2}\right)  & \overset{\left(  h'_{1}\right)  _{P',S\left(
P'\right)  }}{\rightarrow} & \left(  S(P'_{1}),S(P'_{2})\right)  & \overset
{i_{P',S(P')}}{\leftarrow} & \left(  P'_{1},P'_{2}\right)
\end{array}
\]
we conclude that the diagram%
\[%
\begin{array}
[c]{lll}%
H^{\ast}\left(P\right)  & \overset{{I}_{P}%
}{\leftarrow} & H^{\ast}\left( P\right) \\
\quad \uparrow\pi^{u\ast} &  & \quad\uparrow\pi^{u\ast}\\
H^{\ast}\left( P'\right)  & \overset{I_{P'}}{\leftarrow} & H^{\ast
}\left(  P'\right)
\end{array}
\]
commutes. This implies that $I_{P}$ is an isomorphism, which completes the proof.
\end{proof}

According to Lemma \ref{lem:cov_relations_con_ind}, the module $C_{\Lambda
}\left(  \Lambda,h_{1}\right)  $ has a particularly simple structure -- it has
a basis consisting of a single element $p_{T}.$

\begin{lemma}
\label{lem:cov_relations_ind_pair_isom} If $D\overset{f}{\Longrightarrow}D,$ then for any isolating neigbhourhood $N \subset D$ for $\operatorname{Inv(D,f)}$ and any 
index pair $R\in\operatorname{IP}\left(  N,f\right)$ there exists $\left[
r_{T}\right]  \in$ $\operatorname{gim}\left(  I_{R}^{\prime}\right)  $ such
that the map%
\[
H^{\ast}\left(  \Lambda\right)  \ni \alpha\mapsto \alpha\ast\left[  r_{T}\right]
\in\operatorname{gim}\left(  I_{R}^{\prime}\right)
\]
is an isomorphism.
\end{lemma}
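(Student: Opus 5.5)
The plan is to transport the simple module structure of $C_\Lambda(\Lambda,h_1)$ (Lemma \ref{lem:cov_relations_con_ind}) to $C_\Lambda(\operatorname{Inv}(D,f),f)$ using continuation (Theorem \ref{thm:cont_prop}) and independence of the index pair (Theorem \ref{thm:ind_ext_prod}).

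First I will check that $D$ is an isolating neighbourhood for every $h_t=h(t,\cdot)$, $t\in[0,1]$. The covering conditions give $h([0,1]\times D^-)\cap D=\emptyset$, so a point of $D^-$ has no forward image in $D$ and cannot lie in an invariant set. They also give $h([0,1]\times D)\cap D^+=\emptyset$, so no point of $D^+$ lies in the image of $D$ and cannot have a backward orbit in $D$. Since $\partial D\subset D^-\cup D^+$, it follows that $\operatorname{Inv}(D,h_t)\subset\operatorname{int}D$ for all $t$. Theorem \ref{thm:cont_prop} then says that $C_\Lambda(\operatorname{Inv}(D,f),f)$ and $C_\Lambda(\Lambda,h_1)$ are isomorphic. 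I have to be careful about what kind of isomorphism this is. The proof of Theorem \ref{thm:cont_prop} goes through Lemma \ref{lem:dominates_isom}, so the isomorphism is additive and compatible with $\ast$. The ring acting on each side is $\operatorname{gim}(\tilde h_t^\ast)$. Restricting the homotopy to $\Lambda$ shows these maps all coincide with $\tilde f^\ast$ for every $t$, and by Lemma \ref{lem:cov_relations_isom} this common ring is all of $H^\ast(\Lambda)$.

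Next, Lemma \ref{lem:cov_relations_con_ind} gives an index pair $P\in\operatorname{IP}(D,h_1)$ with $I_P$ an isomorphism. Hence $\operatorname{gim}(I_P')=H^\ast(P_1,P_2)$, and $\alpha\mapsto\alpha\ast p_T=\pi_{P_1}^\ast(\alpha)\smile p_T$ is a bijection $H^\ast(\Lambda)\to\operatorname{gim}(I_P')$. Now let $N\subset D$ be any isolating neighbourhood of $\operatorname{Inv}(D,f)$ and $R\in\operatorname{IP}(N,f)$. Theorem \ref{thm:ind_ext_prod}, applied to $N$ and $D$, followed by the continuation isomorphism, gives an additive bijection $i:\operatorname{gim}(I_P')\to\operatorname{gim}(I_R')$ with $i(\alpha\ast[v])=\alpha\ast i([v])$. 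Set $[r_T]:=i([p_T])$. Then $\alpha\ast[r_T]=i(\alpha\ast p_T)$, so the map in the statement is the composite of two bijections and is therefore an isomorphism.

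The main obstacle is the module compatibility along the continuation. The ring $\operatorname{gim}(\tilde h_\alpha^\ast)$ must be identified across parameters, and the identification has to be the identity on $H^\ast(\Lambda)$, so that $\ast$ means the same thing at both ends. I will handle this by noting that $\tilde h_\alpha=\pi\circ h_\alpha|_\Lambda$ varies by a homotopy, so every $\tilde h_\alpha^\ast$ equals $\tilde f^\ast=\eta^\ast$. Then $\operatorname{gim}$ is literally the same subring, namely $H^\ast(\Lambda)$. The map $j$ in the proof of Theorem \ref{thm:cont_prop} is a composition of inclusions, so \eqref{eq:def_dominates2} holds on the nose. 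A second, minor point is the case where $N\neq D$ or the index pairs come from different neighbourhoods; this is absorbed by Theorem \ref{thm:ind_ext_prod}.
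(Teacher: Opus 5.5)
Your proposal is correct and is essentially the paper's argument. You take the index pair $P\in\operatorname{IP}(D,h_1)$ of Lemma \ref{lem:cov_relations_con_ind}, on which $\alpha\mapsto\alpha\ast p_T$ is a bijection, and push the generator $p_T$ to $\operatorname{gim}(I_R')$ through the $\ast$-compatible isomorphisms of Theorem \ref{thm:cont_prop} and Theorem \ref{thm:ind_ext_prod}, while also making explicit what the paper leaves implicit: $D$ isolates $\operatorname{Inv}(D,h_t)$ for every $t$, and all the rings $\operatorname{gim}(\tilde h_t^\ast)$ coincide with $H^\ast(\Lambda)$, so $\ast$ has the same meaning at both ends of the continuation.
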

\begin{proof}
Choose an isolating neighborhood $N$ and an index pair $P\in\operatorname{IP}%
(N,h_{1})$ as in Lemma \ref{lem:cov_relations_con_ind}. Since $H^{\ast}\left(
P_{1},P_{2}\right)  $ is a free left module over $H^{\ast}\left(
\Lambda\right)  $ with a basis consisting of a single element $p_{T}$, by
Theorem \ref{thm:cont_prop} we conclude that the same is true for
$\operatorname{gim}\left(  I_{R}^{\prime}\right).$ Thus there exists some
element $\left[  r_{T}\right]  \in$ $\operatorname{gim}\left(  I_{R}^{\prime
}\right)$ such that each $\left[  r\right]  \in\operatorname{gim}\left(
I_{R}^{\prime}\right)  $ can be written uniquely as%
\[
\left[  r\right]  =\alpha\ast\left[  r_{T}\right]  ,
\]
for some $\alpha\in H^{\ast}\left(  \Lambda\right)  .$ This completes the proof.
\end{proof} \\
\begin{proof}
[Proof of Theorem \ref{thm:cont_thm}]To show that the homomorphism
(\ref{eq:cont_thm_hom}) is injective, suppose that
\begin{equation}
\tilde{\pi}^\ast(\alpha) = \pi_{\operatorname{Inv}(D,f)}^{\ast}(\alpha)=0\label{eq:cont_thm_eq1}%
\end{equation}
for some $\alpha\in H^{\ast}(\Lambda).$ Let $Q=(Q_1,Q_2) \in \operatorname{IP}(D,f)$ be any index pair and observe that 
$$i^\ast_{\operatorname{Inv}(D,f),Q_1}\left(\pi_{_{Q_1}}^\ast(\alpha)\right) = \tilde{\pi}^\ast(\alpha) = 0.$$
By Proposition \ref{prop: lim_ind_pairs} and Theorem \ref{thm:tautness}, we deduce that there exists an isolating neighborhood $D' \subset D$ and an index pair $R = (R_1, R_2) \in \operatorname{IP}(D',f)$ for $\operatorname{Inv}(D,f)$ such that $R \subset Q$ and
$$\pi_{R_1}^\ast (\alpha)= i^\ast_{R_1,Q_1}(\pi_{_{Q_1}}^\ast(\alpha)) = 0.$$

Take $\left[  w_{T}\right]  \in\operatorname{gim}\left(  I_{R}^{\prime
}\right)  $ from Lemma \ref{lem:cov_relations_ind_pair_isom}. Then the bilinearity of the
relative cup product gives%
\[
\pi_{R_{1}}^{\ast}(\alpha)\smile w_{T}=0,
\]
hence%
\[
\alpha\ast\left[  w_{T}\right]  =\left[  \pi_{{R_{1}}}^{\ast}(\alpha)\smile w_{T}\right]
=0.
\]
Because the homomorphism $v\mapsto v\ast\left[  w_{T}\right]  $ is an isomorphism in view
of Lemma \ref{lem:cov_relations_ind_pair_isom}, it follows that $\alpha = 0$. This
completes the proof.
\end{proof}

\section{Conclusions}\label{sec:conclusions}
In this paper, we have generalized Floer's topological persistence theorem to a non-perturbative setting, using topological conditions formulated in terms of covering relations. Our approach does not require the prior existence of a normally hyperbolic invariant manifold, nor the smoothness or invertibility of the underlying map. Using the Conley index we establish an invariant set, whose cohomology ring contains a subring isomorphic to that of a reference manifold

We expect that similar techniques can be applied to study the persistence of stable and unstable sets associated with normally hyperbolic invariant manifolds under non-smooth perturbations. Moreover, the methods developed here may provide tools for investigating the topology of intersections between such sets.  We plan to pursue this in our future work.

\section*{Acknowledgements}
We would like to thank Rafael de la Llave for suggesting this problem to us.

The research of W.S. was partially supported by the program ``Excellence Initiative -- Research University'' (IDUB) at AGH and a subsidy from the Polish Ministry of Science and Higher Education. The research of M.J.C. was partially supported by the National Science Center Poland (NCN) grant 2021/41/B/ST1/00407 and a subsidy from the Polish Ministry of Science and Higher Education.

\bibliographystyle{plain}
\bibliography{paper}

\end{document}